\newcommand{\opnorm}[1]{%
  \left\vert\mkern-1mu\left\vert\mkern-1mu\left\vert #1
    \right\vert\mkern-1mu\right\vert\mkern-1mu\right\vert}
\makeatletter \@addtoreset{equation}{section}
\newtheorem{thm}{Theorem}[section]
\newtheorem{hyp}[thm]{Hypotheses}{\rm}
\newtheorem{hypo}[thm]{Hypothesis}{\rm}
\newtheorem{lemm}[thm]{Lemma}
\newtheorem{coro}[thm]{Corollary}
\newtheorem{prop}[thm]{Proposition}
\newtheorem{rmk}[thm]{Remark}{\rm}
\newcommand{\R}{{\mathbb R}}
\newcommand{\N}{{\mathbb N}}
\newcommand{\Rd}{\mathbb R^d}
\newcommand{\A}{\mathcal{A}}
\newcommand{\one}{\mbox{$1\!\!\!\;\mathrm{l}$}}
\title[hypercontractivity, supercontractivity...]{Hypercontractivity, supercontractivity, ultraboundedness and stability in semilinear problems}
\thanks{The authors are members of GNAMPA of the italian Istituto Nazionale di Alta Matematica. This work has been supported by
the INdAM-GNAMPA Project 2016 ``Equazioni e sistemi di equazioni ellittiche e paraboliche associate ad operatori con coefficienti illimitati e discontinui''.}
\author{D. Addona}
\author{L. Angiuli}
\author{L. Lorenzi}
\address{D.A. \& L.L.: Dipartimento di Matematica, Universit\`a degli Studi di Parma, Parco Area delle Scienze 53/A, I-43124 Parma, Italy.}
\address{L.A.: Dipartimento di Matematica, Universit\`a del Salento, via per Arnesano, s.n., 73100 Lecce, Italy.}
\email{d.addona@campus.unimib.it}
\email{luciana.angiuli@unisalento.it}
\email{luca.lorenzi@unipr.it}
\keywords{nonautonomous second-order elliptic
operators, semilinear parabolic equations, unbounded coefficients, hypercontractivity, supercontractivity, ultraboundedness, stability}
\subjclass[2010]{35K58, 37L15}
\date{\today}
\begin{document}

\begin{abstract}
We study the Cauchy problem associated to a family of nonautonomous semilinear equations in the space of bounded and continuous functions over $\Rd$ and in $L^p$-spaces with respect to tight evolution systems of measures. Here, the linear part of the equation is a nonautonomous second-order elliptic operator with unbounded coefficients defined in $I\times \Rd$, ($I$ being a right-halfline).
To the above Cauchy problem we associate a nonlinear evolution operator, which we study in detail, proving some summability improving properties. We also study the stability
 of the null solution to the Cauchy problem.
\end{abstract}

\maketitle

\section{Introduction}

This paper is devoted to continue the analysis started in \cite{AL}. We consider
a family of linear second-order differential
operators $\mathcal{A}(t)$  acting on smooth function $\zeta$ as
\begin{align}\label{oper}
(\mathcal{A}(t)\zeta)(x)=\sum_{i,j=1}^d q_{ij}(t,x)D_{ij}\zeta(x)+
\sum_{i=1}^d b_i(t,x)D_i\zeta(x)
,\qquad\;\,t \in I,\,\, x\in \Rd,
\end{align}
where $I$ is either an open right halfline or the whole $\R$.
Then, given $T>s \in I$, we are interested in studying the nonlinear Cauchy problem
\begin{equation}
\left\{
\begin{array}{ll}
D_tu(t,x)=(\A(t)u)(t,x)+\psi_u(t,x), & (t,x)\in (s,T]\times\Rd,\\[1mm]
u(s,x)= f(x), &x \in \Rd,
\end{array}
\right.
\label{n-sm-pb}
\end{equation}
where $\psi_u(t,x)=\psi(t,x,u(t,x),\nabla_xu(t,x))$.
We assume that the coefficients $q_{ij}$ and $b_i$ ($i,j=1,\dots,d$), possibly unbounded, are smooth enough, the diffusion matrix $Q=[q_{ij}]_{i,j=1, \dots,d}$ is uniformly elliptic and there exists a
Lyapunov function $\varphi$ for $\mathcal{A}(t)$ (see Hypothesis \ref{base}(iii)).
These assumptions yield that the linear part $\mathcal{A}(t)$ generates a linear evolution operator $\{G(t,s):\, t\geq s\in I\}$ in $C_b(\Rd)$. More precisely, for every $f\in C_b(\Rd)$ and $s \in I$, the function $G(\cdot,s)f$ belongs to $C_b([s,+\infty)\times \Rd)\cap C^{1,2}((s,+\infty)\times\Rd)$,
it is the unique bounded classical solution of the Cauchy problem \eqref{n-sm-pb}, with $\psi\equiv 0$,
and satisfies the estimate
\begin{equation}\label{norm_g}
\|G(t,s)f\|_\infty \le \|f\|_\infty,\qquad\;\, t>s\in I,\;\, f \in C_b(\Rd).
\end{equation}

We refer the reader to \cite{KunLorLun09Non} for the construction of the evolution operator $G(t,s)$ and for further details.

Classical arguments can be adapted to our case to prove the existence of a unique local mild solution $u_f$ of problem \eqref{n-sm-pb} for
any $f\in C_b(\Rd)$, i.e., a function
$u:[s,\tau]\times \R^d\to \R$ (for some $\tau>s$) such that
\begin{align}\label{defi_mild}
u(t,x)= (G(t,s)f)(x)+\int_s^t(G(t,r)\psi_u(r,\cdot))(x)dr,\qquad\;\,t\in [s,\tau],\;\,x\in\Rd.
\end{align}

Under reasonable assumptions such a mild solution $u_f$ is classical, defined in the whole $[s,+\infty)$ and satisfies the condition
\begin{eqnarray*}
\|u_f\|_\infty+\sup_{t \in (s,T)}\sqrt{t-s}\|\nabla_x u_f(t, \cdot)\|_\infty<+\infty
\end{eqnarray*}
for any $T>s$.
Hence, setting ${\mathcal N}(t,s)f=u_f(t,\cdot)$
for any $t>s$ we deduce that ${\mathcal N}(t,s)$ maps $C_b(\Rd)$ into $C^1_b(\Rd)$ and, from the uniqueness of the solution to \eqref{n-sm-pb}, it follows that it satisfies the evolution law
${\mathcal N}(t,s)f={\mathcal N}(t,r){\mathcal N}(r,s)f$ for any $r\in (s,t)$ and $f\in C_b(\Rd)$.

As in the linear case we are also interested to set problem \eqref{n-sm-pb} in an $L^p$-context.
However, as it is already known from the linear case, the most natural $L^p$-setting where problems with unbounded coefficients can be studied is that related to the so-called {\it evolution systems of measures} (\cite{DaPRoc}),
that is one-parameter families of Borel probability measures $\{\mu_t: t \in I\}$ such that
\begin{equation}\label{inv_pro}
\int_{\Rd} G(t,s)fd\mu_t= \int_{\Rd} fd\mu_s,\qquad\;\,f\in C_b(\Rd),\;\,t>s \in I.
\end{equation}

When they exist, evolution families of measures are in general infinitely many, even the tight ones, where, roughly speaking, tight means that all the measures of the family are essentially concentrated on the same large ball
(see Section \ref{sect-2} for a rigorous definition of tightness).
Under additional assumptions on the coefficients of the operator $\mathcal{A}(t)$ (see Section \ref{sect-2}), there exists a unique tight evolution system of measures $\{\mu_t: t \in I\}$,
which has the peculiarity to be the unique system related to the asymptotic behavior of $G(t,s)$ as $t$ tends to $+\infty$.
We also mention that, typically, even if for $t\neq s$, the measures $\mu_t$ and $\mu_s$ are equivalent (being equivalent to the restriction of the Lebesgue measure to the Borel $\sigma$-algebra in $\Rd$), the corresponding $L^p$-spaces differ. In this paper we consider any
tight evolution system of measures.

Formula \eqref{inv_pro} and the density of $C_b(\Rd)$ in $L^p(\Rd, \mu_s)$ allow to extend $G(t,s)$ to a contraction from $L^p(\Rd, \mu_s)$ to $L^p(\Rd, \mu_t)$ for any $t>s$ and any $p \in [1, +\infty)$ and to prove very nice properties of $G(t,s)$ in these spaces.

In view of these facts, it is significant to extend $\mathcal{N}(t,s)$ to an operator from $L^p(\Rd,\mu_s)$ to $L^p(\Rd, \mu_t)$ for any $I\ni s<t$.
This can be done if $p\ge p_0$ (see Hypothesis \ref{base} (v)), $\psi(t,x, \cdot, \cdot)$ is Lipschitz continuous in $\R^{d+1}$ uniformly with respect to $(t,x)\in (s,T]\times\Rd$ and, in addition, $\sup_{t\in (s, T]}\sqrt{t-s}\|\psi(t,\cdot,0,0)\|_{L^p(\Rd, \mu_t)}<+\infty$.
In particular, each operator ${\mathcal N}(t,s)$ is continuous from $L^p(\Rd,\mu_s)$ to $W^{1,p}(\Rd,\mu_t)$.

We stress that the first condition on $\psi$ may seem too restrictive, but in fact it is not. Indeed, the Sobolev embedding theorems fail to hold, in
general, when the Lebesgue measure is replaced by any of the measures $\mu_t$. This can be easily seen in the particular case of the one-dimensional Ornstein-Uhlenbeck operator, where the evolution system of measures is replaced by a time-independent measure $\mu$ (the so-called invariant measure), which is the gaussian centered at zero with covariance $1/2$. For
any $\varepsilon>0$, the function $x\mapsto \exp(2(2p+\varepsilon)^{-1}|x|^2)$ belongs to $W^{k,p}(\R,\mu)$ for any $k\in\N$ but it does not belong to $L^{p+\varepsilon}(\Rd,\mu)$.

Under the previous assumptions, for any $f\in L^p(\Rd,\mu_s)$, ${\mathcal N}(\cdot,s)$ can be identified with the unique
mild solution to problem \eqref{n-sm-pb} which belongs to $L^p((s,T)\times \Rd,\mu)\cap W^{0,1}_p(J\times \Rd,\mu)$, for any $J\Subset (s,T]$, such that $u_f(t,\cdot)\in W^{1,p}(\Rd,\mu_t)$ for almost every $t\in (s,T]$.
Here, $\mu$ is the unique Borel measure on the $\sigma$-algebra of all the Borel subsets
of $I\times\R^d$ which extends the map defined on the product of a Borel set $A\subset I$ and a Borel set $B\subset\Rd$ by
\begin{equation*}
\mu(A \times B):=\int_A\mu_t(B)dt.
\end{equation*}

Since, as it has been stressed, in this context the Sobolev embedding theorems fail to hold in general, the summability improving properties of the nonlinear evolution operator $\mathcal{N}(t,s)$ are not immediate and true in all the cases.
For this reason in Section \ref{4} we investigate properties such as hypercontractivity, supercontractivity, ultraboundedness of the evolution operator $\mathcal{N}(t,s)$ and its spatial gradient.
Differently from \cite{AL}, where $\psi=\psi(t,u)$ and the hypercontractivity of $\mathcal{N}(t,s)$ is proved assuming $\psi(t,0)=0$ for any $t>s$, here we consider a more general case. More precisely we assume that there exist $\xi_0\ge 0$ and $\xi_1,\xi_2 \in \R$ such that
$u\psi(t,x,u,v)\le \xi_0|u|+\xi_1u^2+\xi_2|u||v|$
 for any $t\ge s$, $x,v\in\Rd$ $u\in\R$.
Under some other technical assumptions on the growth of the coefficients $q_{ij}$ and $b_i$ $(i,j=1,\ldots,d)$ as $|x|\to +\infty$, we show that
as in the linear case, (see \cite{AngLorOnI, AngLorLun}), the hypercontractivity and the supercontractivity of $\mathcal{N}(t,s)$ and $\nabla_x\mathcal{N}(t,s)$ are related
to some logarithmic Sobolev inequalities with respect to the tight system $\{\mu_t:\,\,t\in I\}$. These estimates are the natural counterpart of the Sobolev embedding theorems in the context of invariant measures and evolution systems of measures.

For what concerns the ultraboundedness of $\mathcal{N}(t,s)$ and $\nabla_x \mathcal{N}(t,s)$ we first prove an Harnack type estimate which establishes a pointwise estimate of  $|{\mathcal N}(t,s)f|^p$ in terms of $G(t,s)|f|^p$ for any $f\in C_b(\Rd)$, $p>p_0$ and $t>s$. This estimate, together with the evolution law and the ultraboundedness of $G(t,s)$ allow us to conclude that, for any $f\in L^p(\Rd,\mu_s)$
and any $t>s$, the function ${\mathcal N}(t,s)f$ belongs to $W^{1,\infty}(\Rd,\mu_t)$ and to prove an estimate of $\|{\mathcal N}(t,s)f\|_{W^{1,\infty}(\Rd,\mu_t)}$ in terms of $\|f\|_{L^p(\Rd,\mu_s)}$.

Finally, assuming that $\psi(t,x,0,0)= 0$ for every $t\in (s,+\infty)$ and $x \in \Rd$, we prove that the trivial solution to the Cauchy problem \eqref{n-sm-pb} is exponentially stable both in $W^{1,p}(\Rd, \mu_t)$ and in $C^1_b(\Rd)$. This means that $\|u_f(t,\cdot)\|_X\le C_Xe^{-\omega_Xt}$ as $t\to +\infty$ for some constants $C_X>0$ and $\omega_X<0$, both when $X=W^{1,p}(\Rd,\mu_t)$
and $X=C^1_b(\Rd)$. In the first case, the space $X$ depends itself on $t$. We stress that, under sufficient conditions on the coefficients of the operators ${\mathcal A}(t)$, which include their convergence
at infinity, in \cite{AngLor10Com, LorLunSch16Str} it has been proved that the measure $\mu_t$ weakly$^*$ converges to a measure $\mu$, which turns out the invariant measure
of the operator $\A_{\infty}$, whose coefficients are the limit as $t\to +\infty$ of the coefficients of the operator $\A(t)$. This gives more information on the convergence to zero of $\|u_f(t,\cdot)\|_{W^{1,p}(\Rd,\mu_t)}$
at infinity. We refer the reader also to \cite{LorLunZam10Asy} for the case of $T$-time periodic coefficients.

To get the exponential stability of the trivial solution in $C_b(\Rd)$, differently from \cite{AL} where a nonautonomous version of the principle of linearized stability is used and more restrictive assumptions on $\psi$ are required, we let $p$ tend to $+\infty$ in the decay estimate of $\|u_f(t, \cdot)\|_{W^{1,p}(\Rd, \mu_t)}$, since all the constants appearing in this estimate admit finite limit as $p$
tends to $+\infty$. In particular, we stress that we do not need any additional assumptions on the differentiability of $\psi$ but, on the other hand, we require that the mild solution $u_f$ of \eqref{n-sm-pb} is actually classical.

\subsection*{Notations}

For $k\ge 0$, by $C^k_b(\Rd)$ we mean the space of the functions  in $C^k(\Rd)$ which are bounded
together with all their derivatives up to the $[k]$-th order.
$C^k_b(\Rd)$ is endowed  with the norm $\|f\|_{C_b^k(\Rd)}=\sum_{|\alpha|\le [k]}\|D^\alpha f\|_{\infty}+\sum_{|\alpha|=[k]}[D^\alpha f]_{C_b^{k-[k]}(\Rd)}$ where $[k]$ denotes the integer part of $k$.
When $k\notin \N$, we use the subscript ``loc'' to denote the space of all $f\in C^{[k]}(\Rd)$ such that the derivatives of order $[k]$ are $(k-[k]$)-H\"older continuous in any compact subset of $\Rd$.
Given an interval $J$, we denote by $B(J\times\Rd;{\rm Lip}(\R^{d+1}))$ and
$C^{\alpha/2,\alpha}(J\times \Rd)$ ($\alpha\in (0,1)$), respectively, the set of
all functions $f:J\times\Rd\times\R\times\Rd\to\R$ such that $f(t,x,\cdot,\cdot)$ is Lipschitz continuous in $\R^{d+1}$, uniformly with respect to $(t,x)\in J\times\Rd$, and the usual parabolic H\"older space.
The subscript ``loc'' has the same meaning as above.

We use the symbols $D_tf$, $D_i f$ and $D_{ij}f$ to denote respectively the time derivative $\frac{\partial f}{\partial t}$ and the spatial derivatives $\frac{\partial f}{\partial x_i}$ and $\frac{\partial^2f}{\partial x_i\partial x_j}$ for any $i,j=1, \ldots,d$.

The open ball in $\Rd$ centered at $ 0$ with radius $r>0$ and its closure are denoted by  $B_r$ and $\overline{B}_r$, respectively.
For any measurable set $A$, contained in $\R$ or in $\Rd$, we denote by $\one_A$ the characteristic function of $A$.
Finally, we write $A\Subset B$ when $A$ is compactly contained in $B$.

\section{Assumptions and preliminary results}
\label{sect-2}
Let $\{\mathcal{A}(t): t\in I\}$ be the family of linear second-order differential
operators defined by \eqref{oper}.
Our standing assumptions on the coefficients of the operators $\A(t)$ are listed here below.

\begin{hyp}\label{base}
\begin{enumerate}[\rm (i)]
\item
The coefficients $q_{ij}, b_{i}$ belong to $C^{\alpha/2,1+\alpha}_{\rm loc}(I\times \Rd)$ for any $i,j=1,\dots,d$ and
some $\alpha \in (0,1)$;
\item
for every $(t,x)\in I \times \Rd$, the matrix $Q(t,x)=[q_{ij}(t,x)]_{ij}$
is symmetric and there exists a function $\kappa:I\times\Rd\to\R$, with positive infimum $\kappa_0$, such that
$\langle Q(t,x)\xi,\xi\rangle\ge \kappa(t,x)|\xi|^2$ for any $(t,x)\in I\times\Rd$ and any $\xi\in\Rd$;
\item
there exists a non-negative function $\varphi\in C^2(\R^d)$, diverging to $+\infty$ as $|x|\to +\infty$, such that
$(\mathcal{A}(t)\varphi)(x)\leq a-c\,\varphi(x)$ for any $(t,x)\in I\times \Rd$ and
some positive constants $a$ and $c$;
\item
there exists a locally bounded function $\rho:I\to\R^+$ such that
$|\nabla_x q_{ij}(t,x)|\le \rho(t)\kappa(t,x)$ for any $(t,x) \in I\times \Rd$ and
 any $i,j=1, \dots,d$, where $\kappa$ is defined in $(ii)$;
\item there exists a function $r:I\times\Rd\to\R$ such that
$\langle\nabla_x b(t,x)\xi,\xi\rangle \le r(t,x) |\xi|^2$ for any $\xi \in \Rd$ and $(t,x) \in I\times \Rd$. Further,
there exists $p_0\in (1,2]$ such that
\begin{equation}
+\infty>\sigma_{p_0}=\sup_{(t,x)\in I\times\Rd}\left (r(t,x)+\frac{d^3(\rho(t))^2\kappa(t,x)}{4\min\{p_0-1,1\}}\right ).
\label{sigma-p0}
\end{equation}
\end{enumerate}
\end{hyp}

\noindent
Under Hypotheses \ref{base}(i)-(iii) (actually even under weaker assumptions) it is possible to associate an evolution operator $\{G(t,s):\, t\geq s\in I\}$ to the operator $\mathcal{A}(t)$ in $C_b(\Rd)$, as
 described in the Introduction.
The function $G(\cdot,\cdot)f$ is continuous in $\{(s,t,x)\in I\times I\times\Rd: s\le t\}$ and
\begin{equation}
(G(t,s)f)(x)=\int_{\Rd}f(y)p(t,s,x,dy),\qquad\;\, I\ni s<t,\;\, x\in\Rd,
\label{repres-formula}
\end{equation}
where $p(t,s,x,dy)$ are probability measures for any $I\ni s<t$, $x,y\in\Rd$.
This implies that $|G(t,s)f|^p\le G(t,s)(|f|^p)$ for any $I\ni s<t$, $f\in C_b(\Rd)$ and $p\ge 1$.
Moreover, Hypotheses \ref{base}(iv) and (v) yield the pointwise gradient estimates
\begin{equation}\label{point_p_D11}
|(\nabla_x G(t,s)f)(x)|^p \le e^{p\sigma_p(t-s)}(G(t,s)|\nabla f|^p)(x),\qquad\;\, f\in C^1_b(\Rd),
\end{equation}
\begin{align}
|(\nabla_x G(t,s)f)(x)|^p \le \left\{
\begin{array}{ll}
C_{\varepsilon}^pe^{p(\sigma_p+\varepsilon)(t-s)}(t-s)^{-\frac{p}{2}}(G(t,s)|f|^p)(x),
& {\rm if}~\sigma_p<0,\\[2mm]
C_0^p(1+(t-s)^{-\frac{p}{2}})(G(t,s)|f|^p)(x), & {\rm otherwise},
\end{array}
\right.
\label{point_p_D}
\end{align}
for any $f\in C_b(\Rd)$, $t>s$, $x\in \Rd$, $p \in [p_0,+\infty)$, $\varepsilon>0$ and
some positive constants $C_0$ and $C_{\varepsilon}$,
where $\sigma_p$ is given by \eqref{sigma-p0}, with $p$ instead of $p_0$.
We stress that the pointwise estimates \eqref{point_p_D11} and \eqref{point_p_D} have been proved
with the constants $C_0$ and $C_{\varepsilon}$ also depending of $p$. Actually, these constants may be taken independent of
$p$. Indeed, consider for instance estimate \eqref{point_p_D}. If $p\ge p_0$, then using the representation formula \eqref{repres-formula} we can estimate
\begin{align*}
|\nabla_xG(t,s)f|^p=&(|\nabla_xG(t,s)|^{p_0})^{\frac{p}{p_0}}\le (C_{p_0}^{p_0}(1+(t-s)^{-\frac{p_0}{2}})G(t,s)|f|^{p_0})^{\frac{p}{p_0}}\\
\le &2^{\frac{p-p_0}{p_0}}C_{p_0}^p(1+(t-s)^{-\frac{p}{2}})(G(t,s)|f|^{p_0})^{\frac{p}{p_0}}\\
\le &2^{\frac{p-p_0}{p_0}}C_{p_0}^p(1+(t-s)^{-\frac{p}{2}})G(t,s)|f|^p
\end{align*}
for any $t>s\in I$ and $f\in C_b(\Rd)$, and, hence, estimate \eqref{point_p_D} holds true with a constant which can be taken independent of $p$.

\begin{rmk}
\label{rmk-2.4}{\rm
If the diffusion coefficients are bounded and independent of $x$, then the pointwise gradient estimate \eqref{point_p_D11} holds true also with $p=1$
and $\sigma_1=r_0$, where $r_0$ is the supremum over $I\times\Rd$ of the function
$r$ in Hypothesis \ref{base}(v).}
\end{rmk}

Under Hypotheses \ref{base} we can also associate an evolution system of measures $\{\mu_t: t\in I\}$ with the operators ${\mathcal A}(t)$.
Such a family of measures is tight, namely
for every $\varepsilon>0$ there exists $r>0$ such that $\mu_s(\Rd\setminus B_r)<\varepsilon$ for any $s \in I$.
The invariance property \eqref{inv_pro} and the density of $C_b(\Rd)$ in $L^p(\Rd, \mu_s)$, $s \in I$, allows to extend
$G(t,s)$ to a contraction from $L^p(\Rd,\mu_s)$ to $L^p(\Rd,\mu_t)$ for any $t>s$.
As it has been stressed in the Introduction, in general evolution systems of measures are infinitely many, but, under suitable assumptions, there exists a unique
tight evolution system of measures. This is, for instance, the case when Hypotheses \ref{base} are satisfied as
well as the following two conditions:
\begin{enumerate}
\item
$q_{ij}$, $b_i$ belong to $C^{\alpha/2,1+\alpha}_{\rm loc}([a,+\infty)\times \R^d)$ for any $i,j=1,\ldots,d$ and some $a\in I$. Moreover,
$q_{ij}\in C_b([a,+\infty)\times B_R)$ and
$D_kq_{ij},b_j\in C_b([a,+\infty);L^p(B_R))$ for any $i,j,k\in\{1,\ldots,d\}$, $R>0$ and some $p>d+2$;
\item
there exists a constant $c>0$ such that either $|Q(t,x)|\le c(1+|x|)\varphi(x)$ and
$\langle b(t,x),x\rangle \le c(1+|x|^2)\varphi(x)$
for any $(t,x)\in [a,+\infty)\times \R^d$, or the diffusion coefficients are bounded in $[a,+\infty)\times\Rd$.
\end{enumerate}

For more details and the proofs of the results that we have mentioned, we refer the reader to \cite{KunLorLun09Non, LorLibro-2, LorLunSch16Str,LorZam}.

\section{The semilinear problem in a bounded time interval}\label{Sp}
Given $I\ni s<T$, we are interested in studying the Cauchy problem \eqref{n-sm-pb}
both in the case when $f\in C_b(\Rd)$ and in the case when $f\in L^p(\Rd,\mu_s)$.

Let us introduce the standing assumptions on $\psi$.

\begin{hyp}\label{hyp-1}
\begin{enumerate}[\rm (i)]
\item
The function $\psi:{[s,T]}\times \Rd\times\R\times\Rd\to \R$ is continuous.
Moreover, there exist $\beta \in [0,1)$ such that for any $R>0$ and some constant $L_R>0$
\begin{align}
&|\psi(t,x,u_1,(t-s)^{-1/2}v_1)-\psi(t,x,u_2,(t-s)^{-1/2}v_2)|\notag\\
\le &L_R(t-s)^{-\beta}(|u_1-u_2|+|v_1-v_2|),
\label{loc_Lip}
\end{align}
for any $t\in(s,T]$, $x \in \Rd$, $u_1,u_2\in [-R,R]$, $v_1, v_2\in \overline{B}_R$;
\item
the function $\psi(\cdot, \cdot,0,0)$ belongs to $C_b([s,T]\times\Rd)$.
\end{enumerate}
\end{hyp}

\begin{thm}\label{exi_cb}
Under Hypotheses $\ref{base}$ and $\ref{hyp-1}$, for any 
$\overline{f}\in C_b(\Rd)$ there exist $r_0, \delta\in (0,T-s]$ such that, if $f \in C_b(\Rd)$ and $\|f-\overline{f}\|_{\infty}\le r_0$, then
problem \eqref{n-sm-pb} admits a unique mild solution $u_f \in C_b([s,s+\delta]\times \Rd)\cap C^{0,1}((s, s+\delta]\times \Rd)$ of \eqref{n-sm-pb} which satisfies the estimate
\begin{align}
&\|u_f\|_{\infty}+\sup_{t\in(s,s+\delta]}\sqrt{t-s}\|\nabla_x u_f(t,\cdot)\|_\infty\notag\\
\le &2(1+2C_0+C_0\sqrt{\delta})(\|f\|_{C_b(\Rd)}+2\delta\|\psi(\cdot,\cdot,0,0)\|_{C_b([s,s+\delta]\times\Rd)}),
\label{st-gr}
\end{align}
where $C_0$ is the constant in \eqref{point_p_D}.
Moreover, for any $R>0$, $\theta\in (0,1)$ and $t\in (s,s+\delta]$, $u_f(t,\cdot)$ belongs to $C^{1+\theta}(B_R)$ and there exists a positive
constant $C_{R,T-s}$ such that $\sup_{t\in (s,s+\delta]}(t-s)^{(1+\theta)/2}\|u_f(t,\cdot)\|_{C^{1+\theta}(B_R)}\le C_{R,T-s}\|f\|_{\infty}$. Finally, if
$g \in C_b(\Rd)$ is such that $\|g-\overline{f}\|_{\infty}\le r_0$, then
\begin{equation}\label{dip_dati}
\|u_f-u_g\|_{\infty}+\sup_{t\in(s,s+\delta]}\sqrt{t-s}\|\nabla_x u_f(t,\cdot)-\nabla_x u_g(t,\cdot)\|_\infty
\le 2(1+C_0+C_0\sqrt{\delta})  \|f-g\|_{\infty}.
\end{equation}
\end{thm}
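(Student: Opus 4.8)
The plan is to obtain $u_f$ as the unique fixed point of the integral map associated with \eqref{defi_mild} in a ball of a weighted sup-norm space, and then to read off \eqref{st-gr}, \eqref{dip_dati} and the interior regularity from the smoothing properties of $G(t,s)$ recalled in Section~\ref{sect-2}. For $\delta\in(0,T-s]$ I would work in the Banach space $Y_\delta$ of all $u\in C_b([s,s+\delta]\times\Rd)\cap C^{0,1}((s,s+\delta]\times\Rd)$ with $\|u\|_{Y_\delta}:=\|u\|_\infty+\sup_{t\in(s,s+\delta]}\sqrt{t-s}\,\|\nabla_xu(t,\cdot)\|_\infty<+\infty$. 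Having fixed $\overline f$, I first pick $R>0$ larger than the right-hand side of \eqref{st-gr} computed with $T-s$ in place of $\delta$ and $\|\overline f\|_\infty+1$ in place of $\|f\|_{C_b(\Rd)}$, so that $R$ — hence the Lipschitz constant $L_R$ of Hypothesis~\ref{hyp-1}(i) — depends only on $\overline f$ and on the data. Then, for $f\in C_b(\Rd)$, I consider the map $\Gamma_fu(t,\cdot):=G(t,s)f+\int_s^tG(t,r)\psi_u(r,\cdot)\,dr$ and aim to show that, after choosing $r_0\in(0,T-s]$ so small that $\|f-\overline f\|_\infty\le r_0$ forces $\|f\|_\infty\le\|\overline f\|_\infty+1$, and then $\delta\in(0,r_0]$ small enough (depending on $R$, $L_R$, $\beta$, $C_0$ and the sup-norms of $\overline f$ and $\psi(\cdot,\cdot,0,0)$), $\Gamma_f$ is a contraction of the closed ball $\overline B_{Y_\delta}(0,R)$ into itself whenever $\|f-\overline f\|_\infty\le r_0$.

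The analytic input is twofold: the contractivity $\|G(t,s)f\|_\infty\le\|f\|_\infty$ together with the consequence of \eqref{point_p_D} — which holds with a $p$-independent constant, as noted before Remark~\ref{rmk-2.4} — that $\|\nabla_xG(t,r)h\|_\infty\le C_0(1+(t-r)^{-1/2})\|h\|_\infty$ for $s\le r<t\le T$; and the observation that, for $u\in\overline B_{Y_\delta}(0,R)$ and $r\in(s,s+\delta]$, writing $\nabla_xu(r,x)=(r-s)^{-1/2}v_1$ with $v_1:=\sqrt{r-s}\,\nabla_xu(r,x)$ one has $|u(r,x)|\le R$, $|v_1|\le R$ and $|u(r,x)|+|v_1|\le\|u\|_{Y_\delta}$, so that \eqref{loc_Lip} with the second triple equal to $(0,0)$ gives $\|\psi_u(r,\cdot)\|_\infty\le\|\psi(\cdot,\cdot,0,0)\|_\infty+L_R(r-s)^{-\beta}\|u\|_{Y_\delta}$. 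Inserting these into $\Gamma_fu$ and using that $\int_s^t(r-s)^{-\beta}\,dr$, $\int_s^t(t-r)^{-1/2}\,dr$ and $\int_s^t(t-r)^{-1/2}(r-s)^{-\beta}\,dr$ equal, up to positive constants, $(t-s)^{1-\beta}$, $(t-s)^{1/2}$ and $(t-s)^{1/2-\beta}$, one gets $\|\Gamma_fu\|_{Y_\delta}\le(1+2C_0+C_0\sqrt\delta)(\|f\|_\infty+2\delta\,\|\psi(\cdot,\cdot,0,0)\|_\infty)+\omega(\delta)\|u\|_{Y_\delta}$, where $\omega(\delta)\to0$ as $\delta\to0^+$ because every power of $(t-s)$ that survives after multiplication by the weight $\sqrt{t-s}$ (namely $1-\beta$, $\tfrac32-\beta$ and $1-\beta$) is strictly positive, precisely because $\beta<1$. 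Shrinking $\delta$ so that $\omega(\delta)\le\tfrac12$ makes $\Gamma_f$ map $\overline B_{Y_\delta}(0,R)$ into itself; applying the same inequality to the fixed point $u_f$ and absorbing $\omega(\delta)\|u_f\|_{Y_\delta}$ into the left-hand side yields \eqref{st-gr}. The continuity of $\Gamma_fu$ and of $\nabla_x\Gamma_fu$ on $(s,s+\delta]\times\Rd$, up to $t=s$, is checked along the way from the continuity of $G(\cdot,\cdot)h$ and dominated convergence.

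For the contraction, applying \eqref{loc_Lip} to the triples built from $u$ and $v$ as above gives $\|\psi_u(r,\cdot)-\psi_v(r,\cdot)\|_\infty\le L_R(r-s)^{-\beta}\|u-v\|_{Y_\delta}$, and the very same integral estimates yield $\|\Gamma_fu-\Gamma_fv\|_{Y_\delta}\le\omega(\delta)\|u-v\|_{Y_\delta}$; a further reduction of $\delta$ makes $\omega(\delta)<1$, so the Banach fixed-point theorem gives a unique $u_f\in\overline B_{Y_\delta}(0,R)$ solving \eqref{defi_mild}. Uniqueness in the whole class $C_b([s,s+\delta]\times\Rd)\cap C^{0,1}((s,s+\delta]\times\Rd)$ follows by the standard argument that the set of $t$ on which two such solutions coincide is open (local contraction) and closed (continuity) in $[s,s+\delta]$. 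Estimate \eqref{dip_dati} is obtained in the same way: the difference of the two mild formulas no longer involves $\psi(\cdot,\cdot,0,0)$, only $\psi_{u_f}-\psi_{u_g}$, so after absorbing $\omega(\delta)\|u_f-u_g\|_{Y_\delta}$ one is left exactly with the factor $2(1+C_0+C_0\sqrt\delta)$ in front of $\|f-g\|_\infty$. Finally, the interior regularity is a bootstrap: since $\|\psi_{u_f}(r,\cdot)\|_\infty\le\|\psi(\cdot,\cdot,0,0)\|_\infty+L_R(r-s)^{-\beta}R$ is integrable in $r$ on $(s,s+\delta)$, inserting this into \eqref{defi_mild} and invoking the interior Schauder-type estimate $\|G(t,r)h\|_{C^{1+\theta}(B_R)}\le C_{R,T-s}(1+(t-r)^{-(1+\theta)/2})\|h\|_\infty$ (licit since $\theta<1$), together with the fact that $\int_s^t(t-r)^{-(1+\theta)/2}(r-s)^{-\beta}\,dr$ and $\int_s^t(t-r)^{-(1+\theta)/2}\,dr$ equal, up to positive constants, $(t-s)^{(1-\theta)/2-\beta}$ and $(t-s)^{(1-\theta)/2}$, yields the claimed bound on $\sup_{t\in(s,s+\delta]}(t-s)^{(1+\theta)/2}\|u_f(t,\cdot)\|_{C^{1+\theta}(B_R)}$ after multiplication by $(t-s)^{(1+\theta)/2}$, all the surviving exponents of $(t-s)$ being then nonnegative.

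I expect the only genuinely delicate point to be the interaction between the singularity $(t-s)^{-1/2}$ carried both by $\nabla_xu$ and by the gradient estimate for $G$, on one side, and the factor $(t-s)^{-\beta}$ in the Lipschitz assumption \eqref{loc_Lip}, on the other: one must set up the fixed-point space with precisely the weight $\sqrt{t-s}$ on the gradient so that, after the integrations above, every power of $(t-s)$ that remains is positive. This is where the restriction $\beta<1$ is used, and it is what dictates the form of the norm of $Y_\delta$; the remaining steps are routine perturbation arguments around the linear evolution operator.
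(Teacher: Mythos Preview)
Your proposal is correct and follows the same strategy as the paper: a Banach fixed-point argument in the weighted space $Y_\delta$, with \eqref{st-gr} and \eqref{dip_dati} read off from the contraction estimate and the interior $C^{1+\theta}$ bound obtained from the same Schauder-type estimate $\|G(t,r)h\|_{C^{1+\theta}(B_R)}\le C(t-r)^{-(1+\theta)/2}\|h\|_\infty$ that the paper records in its appendix.

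The one place where you and the paper diverge is the uniqueness step. The paper does not use an open/closed argument; instead it takes two solutions $u_1,u_2\in Y_\delta$, sets $h(r)=\|u_1(r,\cdot)-u_2(r,\cdot)\|_\infty+\sqrt{r-s}\,\|\nabla_x(u_1-u_2)(r,\cdot)\|_\infty$, iterates the mild-formula estimate once (splitting $\sqrt{t-s}\le\sqrt{t-r}+\sqrt{r-s}$ to tame the $(t-r)^{-1/2}$ singularity from the gradient bound), arrives at $h(t)\le c\int_s^t(r-s)^{-\beta}h(r)\,dr$, and concludes via the generalized Gronwall lemma. Your connectedness argument is a legitimate alternative, but note that its ``open at $t_0=s$'' step is exactly the weighted contraction you already set up for existence and therefore presupposes that both competitors lie in $Y_\delta$; the paper's Gronwall route makes the same tacit assumption but has the minor advantage of displaying explicitly how the two singularities $(t-r)^{-1/2}$ and $(r-s)^{-\beta}$ combine and why $\beta<1$ suffices.
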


\begin{proof}
Even if the proof is quite standard, for the reader's convenience we provide some details.

Fix $\overline{f}\in C_b(\Rd)$ and let $R_0>0$ be such that $R_0/(1+K_0) \ge 8\|\overline{f}\|_{\infty}$,
where $K_0=C_0(1+\sqrt{T-s})$ and $C_0$ is the constant in \eqref{point_p_D}.
Further, for any $\delta\in (0,T-s]$, let $Y_{\delta}$ be the set of all $u\in  C_b([s,s+\delta]\times \Rd)\cap C^{0,1}((s,s+\delta)\times \Rd)$ such that
$\|u\|_{Y_{\delta}}=\|u\|_{C_b((s,s+\delta]\times \Rd)}+ \sup_{t\in (s,s+\delta]}\sqrt{t-s}\|\nabla_xu(t,\cdot)\|_\infty<+\infty$.

{\em Step 1.} Here, we prove that there exists $\delta>0$ such that, for any $f\in C_b(\Rd)$ satisfying the condition $\|f-\overline f\|_{\infty}\le r_0:=R_0/(4+4K_0)$,
there exists a mild solution to problem \eqref{n-sm-pb} defined in the time interval $[s,s+\delta]$. For this purpose, we consider the operator $\Gamma$, defined by the right-hand side of \eqref{defi_mild}
for any $u \in B_{Y_{\delta}}(R_0)$ (the ball of $Y_{\delta}$ centered at zero with radius $R_0$).
Clearly, the function $\psi_u$ is continuous in $(s,s+\delta]\times \Rd$ and $\psi_u(t,\cdot)$ is bounded in $\Rd$ for any $t \in (s,s+\delta]$. Moreover, estimating
$|\psi_u(t,x)|\le |\psi_u(t,x)-\psi(t,x,0,0)|+|\psi(t,x,0,0)|$ and taking
\eqref{loc_Lip} into account, we can easily show that the function $t\mapsto (t-s)^{\beta}\|\psi_u(t,\cdot)\|_\infty$ is bounded in $(s,s+\delta)$.
Hence, Proposition \ref{smoth_v} and estimates \eqref{norm_g} and \eqref{point_p_D} show that $\Gamma(u)\in Y_{\delta}$ for any $t \in (s,s+\delta]$
and $u\in B_{Y_{\delta}}(R_0)$.
To show that, for a suitable $\delta\in (0,1]$, $\Gamma$ is a $1/2$-contraction in $B_{Y_{\delta}}(R_0)$, we observe that,
using again \eqref{loc_Lip}, it follows that
\begin{align}
\|\psi_u(t,\cdot)- \psi_v(t, \cdot)\|_\infty
\le L_{R_0}(t-s)^{-\beta}\|u-v\|_{Y_{\delta}},\qquad\;\,t\in (s,s+\delta],
\label{stima-psi-neu}
\end{align}
for any $u, v \in B_{Y_{\delta}}(R_0)$, where $L_{R_0}$ is the constant in Hypothesis \ref{hyp-1}(i).
From this inequality and estimates \eqref{norm_g} and \eqref{point_p_D} we conclude that
$\|\Gamma(u)-\Gamma(v)\|_{Y_{\delta}}\le c_1\delta^{1-\beta}\|u-v\|_{Y_{\delta}}$ for any $u,v\in B_{Y_{\delta}}(R_0)$, where
$c_1$, as the forthcoming constants, is independent of $\delta$ and $u$, if not otherwise specified.
Hence, choosing $\delta$ properly, we can make $\Gamma$ a $1/2$-contraction in $B_{Y_{\delta}}(R_0)$.

It is also straightforward to see that $\Gamma$ maps $B_{Y_{\delta}}(R_0)$ into itself, up to replacing $\delta$ with a smaller value if needed. It suffices to split $\Gamma(u)=(\Gamma(u)-\Gamma(0))+\Gamma(0)$, use the previous result and estimate
$\|\Gamma(0)\|_{Y_{\delta}}\le(1+C_0+C_0\sqrt{\delta})\|f\|_{\infty}+\delta(1+2C_0+C_0\sqrt{\delta})\|\psi(\cdot,\cdot,0,0)\|_{C_b([s,T]\times\Rd)}$.
As a consequence, $\Gamma$ has a unique fixed point in $B_{Y_{\delta}}(R_0)$, which is a mild solution of \eqref{n-sm-pb} and satisfies \eqref{st-gr}.

{\em Step 2.} Here we prove the uniqueness of the mild solution $u_f$. For this purpose, let $u_1, u_2\in Y_{\delta}$ be two mild solutions. By Lemma \ref{lemm-brigida}, the function
$r \mapsto h(r):= \|u_1(r,\cdot)-u_2(r,\cdot)\|_\infty+ \sqrt{r-s}\|\nabla_x u_1(r,\cdot)-\nabla_x u_2(r,\cdot)\|_\infty$ is measurable in $(s,s+\delta)$.
Moreover, using \eqref{stima-psi-neu}, we easily deduce that
\begin{align}\label{diff-fun}
\|D_x^ju_1(t,\cdot)-D_x^ju_2(t,\cdot)\|_\infty
\le c_2(M)\int_s^t (t-r)^{-\frac{j}{2}}(r-s)^{-\beta}h(r)dr,
\end{align}
for $j=0,1$, any $t\in [s,s+\delta]$, where $M=\max\{\|u_1\|_{Y_{\delta}},\|u_2\|_{Y_{\delta}}\}$.
Estimating $\sqrt{t-s}$ with $\sqrt{t-r}+\sqrt{r-s}$ for any $r \in (s,t)$, from \eqref{diff-fun}, with $j=1$, it follows that
\begin{align}
&\sqrt{t-s}\|\nabla_x u_1(t,\cdot)-\nabla_x u_2(t,\cdot)\|_\infty \notag\\
\le &c_2(M)\int_s^t(r-s)^{-\beta}h(r)dr+c_2(M)\int_s^t(t-r)^{-\frac{1}{2}}(r-s)^{\frac{1}{2}-\beta}\|u_1(r,\cdot)\!-\!u_2(r,\cdot)\|_{\infty}dr\notag\\
&+c_2(M)\int_s^t(t-r)^{-\frac{1}{2}}(r-s)^{1-\beta}\|\nabla_xu_1(r,\cdot)\!-\!\nabla_xu_2(r,\cdot)\|_{\infty}dr.
\label{ale}
\end{align}
Using \eqref{diff-fun} we estimate the last two integral terms in the right-hand side of \eqref{ale}, which we denote by
${\mathcal I}(t)$ and ${\mathcal J}(t)$.
Replacing \eqref{diff-fun}, with $j=0$, in ${\mathcal I}(t)$, we get
\begin{align}
{\mathcal I}(t)&\le c_3(M)\delta^{1-\beta}\int_s^t(\sigma-s)^{-\beta}h(\sigma)d\sigma.
\label{A}
\end{align}

The same arguments show that ${\mathcal J}(t)$ can be estimated pointwise in $[s,s+\delta]$ by the right-hand side of \eqref{A}, with $c_3(M)$ being
possibly replaced by a larger constant $c_4(M)$.
Summing up, we have proved that
\begin{align}
\sqrt{t-s}\|\nabla_x u_1(t,\cdot)-\nabla_x u_2(t,\cdot)\|_\infty\le c_5(M)\delta^{1-\beta}\int_s^t(\sigma-s)^{-\beta}h(\sigma)d\sigma.
\label{ale-1}
\end{align}
From \eqref{diff-fun} and \eqref{ale-1} we conclude that
\begin{eqnarray*}
h(t)\le c_6(M,\delta)\int_s^t (r-s)^{-\beta}h(r)dr,\qquad\;\,t\in (s,s+\delta].
\end{eqnarray*}
The generalized Gronwall lemma (see \cite{Gronwall}) yields that $h(t)\equiv 0$ for any $t \in (s,s+\delta)$, i.e., $u_1\equiv u_2$ in $(s,s+\delta)\times \Rd$.

{\em Step 3.} Here, we prove \eqref{st-gr} and \eqref{dip_dati}.
Since $u_f=\Gamma(0)+(\Gamma(u_f)-\Gamma(0))$ and $\Gamma$ is a $1/2$-contraction in $B_{Y_{\delta}}(R_0)$, we conclude that
$\|u_f\|_{Y_{\delta}}\le 2\|\Gamma(0)\|_{Y_{\delta}}$ and \eqref{st-gr} follows from the estimate on $\|\Gamma(0)\|_{Y_{\delta}}$ proved above.
Estimate \eqref{dip_dati} can be proved in the same way.

{\em Step 4.} Here, we prove that $u_f(t,\cdot)\in C^{1+\theta}(B_R)$ for any $t\in (s,s+\delta]$, $R>0$, $\theta\in (0,1)$, and $\sup_{t\in (s,s+\delta]}(t-s)^{(1+\theta)/2}\|u_f(t,\cdot)\|_{C^{1+\theta}(B_R)}\le
c_7\|f\|_{\infty}$ for some constant $c_7$, independent of $f$.
For this purpose, we observe that the results in the previous steps show that the function $\psi_u$ satisfies the estimate
$(t-s)^{\beta}\|\psi_u(t,\cdot)\|_\infty\leq c_8\|f\|_\infty$ for any $t\in (s,s+\delta]$, the constant $c_8$ being independent of $f$. Applying
Proposition \ref{smoth_v} and estimate \eqref{28AL} we complete the proof.
\end{proof}

\begin{coro}
\label{coro-classica}
In addition to the assumption of Theorem $\ref{exi_cb}$ suppose that
there exist $\beta\in [0,1)$ and $\gamma\in(0,1)$ such that $2\beta+\gamma<2$ and
\begin{align}
|\psi(t,x,u,(t-s)^{-1/2}v)-\psi(t,y,u,(t-s)^{-1/2}v)|
\leq C_R(t-s)^{-\beta}|x-y|^\gamma,
\label{hyp-holder}
\end{align}
for any $t\in(s,T]$, $x,y,v\in B_R$, $u\in [-R,R]$, any $R>0$ and some positive constant $C_R$.
Then, for any $f\in C_b(\Rd)$ the mild solution $u_f$ to problem \eqref{n-sm-pb}
belongs to $C^{1,2}((s,s+\delta]\times \Rd)$ and it is a classical solution to \eqref{n-sm-pb}.
\end{coro}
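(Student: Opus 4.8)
The plan is to upgrade the mild solution $u_f$ from the regularity $C^{0,1}((s,s+\delta]\times\Rd)$ supplied by Theorem \ref{exi_cb} to $C^{1,2}((s,s+\delta]\times\Rd)$ by means of classical parabolic Schauder theory applied to the linear nonautonomous equation $D_tu=\A(t)u+g$, where $g:=\psi_{u_f}$ is now viewed as a \emph{given} inhomogeneous term. The point of the extra hypothesis \eqref{hyp-holder} is precisely to guarantee that this term $g$ is locally parabolically Hölder continuous on $(s,s+\delta]\times\Rd$ — something the mere local Lipschitz bound \eqref{loc_Lip} in the $(u,v)$ variables does not give by itself, since without control of the Hölder modulus of $\psi$ in the $x$-variable one only knows $g$ is bounded on compact subsets of $(s,s+\delta]\times\Rd$.

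First I would fix $R>0$, a cylinder $Q_R:=(s+\eta,s+\delta]\times B_R$ with $\eta\in(0,\delta)$, and show that $g=\psi_{u_f}\in C^{\gamma'/2,\gamma'}_{\rm loc}((s,s+\delta]\times\Rd)$ for a suitable $\gamma'\in(0,1)$. On $Q_R$ we have, by Step 4 of Theorem \ref{exi_cb}, that $u_f(t,\cdot)\in C^{1+\theta}(B_R)$ with a bound of order $(t-s)^{-(1+\theta)/2}$, so $u_f$ and $\nabla_x u_f$ are bounded and spatially $\theta$-Hölder (resp. Hölder of exponent $\theta$ after accounting for the $(t-s)^{-1/2}$ scaling in the second argument of $\psi$) on $Q_R$, uniformly. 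Splitting
\begin{align*}
&\psi_{u_f}(t,x)-\psi_{u_f}(t',x')\\
={}&[\psi(t,x,u_f(t,x),\nabla_xu_f(t,x))-\psi(t,x',u_f(t,x),\nabla_xu_f(t,x))]\\
&+[\psi(t,x',u_f(t,x),\nabla_xu_f(t,x))-\psi(t,x',u_f(t',x'),\nabla_xu_f(t',x'))]\\
&+[\psi(t,x',u_f(t',x'),\nabla_xu_f(t',x'))-\psi(t',x',u_f(t',x'),\nabla_xu_f(t',x'))],
\end{align*}
I would bound the first bracket by \eqref{hyp-holder} (giving $|x-x'|^\gamma$ up to the factor $(t-s)^{-\beta}$, which is bounded on $Q_R$), the second bracket by \eqref{loc_Lip} together with the spatial $C^{1+\theta}$-regularity of $u_f$ (giving $|x-x'|^\theta$) and with the time-Hölder continuity of $t\mapsto u_f(t,\cdot)$ and $t\mapsto\nabla_x u_f(t,\cdot)$ in the sup norm — itself obtained by inserting \eqref{diff-fun} and \eqref{ale-1} (estimates already used in Step 2 of Theorem \ref{exi_cb}, with $u_2$ replaced by a time translate of $u_1$) — and the third bracket by the continuity of $\psi$ in $t$, which combined with uniform continuity on the compact range of values yields a Hölder-in-$t$ bound. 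Choosing $\gamma'=\min\{\gamma,\theta,1-\text{(loss from the }(t-s)^{-1/2}\text{ scaling)}\}$, and using the constraint $2\beta+\gamma<2$ to absorb the singular weights $(t-s)^{-\beta}$ on $Q_R\Subset(s,s+\delta]\times\Rd$, I obtain $g\in C^{\gamma'/2,\gamma'}_{\rm loc}((s,s+\delta]\times\Rd)$.

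Once the inhomogeneity is locally Hölder, I would invoke interior parabolic Schauder estimates (valid since the coefficients $q_{ij},b_i$ lie in $C^{\alpha/2,1+\alpha}_{\rm loc}$ by Hypothesis \ref{base}(i) and $Q$ is locally uniformly elliptic by \ref{base}(ii)): the unique bounded solution of $D_tv=\A(t)v+g$ with the given continuous initial datum is, on every interior subcylinder, of class $C^{1+\gamma''/2,2+\gamma''}_{\rm loc}$, hence in particular $C^{1,2}$. Since $u_f$ is a mild solution with bounded $u_f$ and $\nabla_xu_f$ satisfying the $\sqrt{t-s}$-weighted bounds, and since the bounded classical solution of the linear problem with datum $f$ and inhomogeneity $g$ is unique (by the maximum principle / uniqueness for $G(t,s)$ invoked in the Introduction), $u_f$ coincides with that classical solution; therefore $u_f\in C^{1,2}((s,s+\delta]\times\Rd)$ and, substituting back, it solves $D_tu_f=\A(t)u_f+\psi_{u_f}$ pointwise with $u_f(s,\cdot)=f$ by continuity up to $t=s$.

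The main obstacle is the first step: extracting genuine parabolic Hölder continuity of $t\mapsto u_f(t,\cdot)$ (and of its gradient) in the sup norm near $t=s$ from the mild formula, so that the middle bracket above is controlled. This requires a careful bookkeeping of the singular weights $(t-s)^{-\beta}$ and $(t-s)^{-1/2}$ in \eqref{diff-fun}–\eqref{ale-1} when comparing $u_f(t,\cdot)$ with $u_f(t+h,\cdot)$, exactly where the hypothesis $2\beta+\gamma<2$ is used to ensure the resulting time-increment bound has a positive Hölder exponent; everything afterwards is a standard bootstrap via Schauder theory.
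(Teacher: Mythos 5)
There is a genuine gap in your first (and, as you say, main) step. To run classical interior parabolic Schauder theory you need $\psi_{u_f}$ to be H\"older continuous in \emph{time} as well as in space, and your third bracket is supposed to deliver this from ``the continuity of $\psi$ in $t$ \dots combined with uniform continuity on the compact range of values''. But Hypotheses \ref{hyp-1} and \eqref{hyp-holder} give no H\"older modulus of $\psi$ in its $t$-argument at all: $\psi$ is merely continuous in $t$, and uniform continuity on a compact set yields a modulus of continuity, not a H\"older bound. The middle bracket has a related problem: the time-H\"older continuity of $t\mapsto u_f(t,\cdot)$ and especially of $t\mapsto\nabla_xu_f(t,\cdot)$ in sup norm cannot be read off \eqref{diff-fun}--\eqref{ale-1}, since those estimates compare two mild solutions of the \emph{same} nonautonomous problem, whereas a time translate of $u_f$ solves a time-shifted equation (the coefficients of $\A(t)$ and $\psi$ depend on $t$), so the proposed substitution ``$u_2=$ time translate of $u_1$'' is not legitimate. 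Hence the parabolic-H\"older route, as written, does not go through under the stated hypotheses.

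The paper's proof avoids time regularity of $\psi_{u_f}$ altogether. It only proves the spatial estimate $\|\psi_{u_f}(t,\cdot)\|_{C^{\gamma}(B_R)}\le \widetilde C(t-s)^{-\beta-\gamma/2}\|f\|_{\infty}$ (by adding and subtracting $\psi(t,y,u_f(t,x),\nabla_xu_f(t,x))$, using \eqref{hyp-holder}, the bound $\|u_f(t,\cdot)\|_{C^{1+\gamma}(B_R)}\lesssim (t-s)^{-(1+\gamma)/2}$ from Theorem \ref{exi_cb} and an interpolation of \eqref{st-gr}), and then applies Proposition \ref{smoth_v}: the variation-of-constants integral $z(t,\cdot)=\int_s^tG(t,r)\psi_{u_f}(r,\cdot)\,dr$ is of class $C^{1,2}((s,s+\delta]\times\Rd)$ as soon as $\sup_t(t-s)^{\beta+\gamma/2}\|\psi_{u_f}(t,\cdot)\|_{C^{\gamma}(B_R)}<+\infty$ with weight exponent $\beta+\gamma/2<1$ --- this, not a positive time-H\"older exponent, is where the assumption $2\beta+\gamma<2$ is used. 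Since $u_f=G(\cdot,s)f+z$ by \eqref{defi_mild}, no separate uniqueness/identification argument is needed either. If you want to keep a Schauder-type viewpoint, you must replace the classical parabolic Schauder estimates by a result requiring H\"older regularity of the forcing term in $x$ only (which is exactly what Proposition \ref{smoth_v} provides in this setting); otherwise you need an extra hypothesis of H\"older continuity of $\psi$ in $t$, which the corollary does not assume.
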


\begin{proof}
Fix $R>0$. Theorem \ref{exi_cb} shows that $u_f(t,\cdot)$ belongs to $C^{1+\gamma}(B_R)$
and $\|\nabla_xu_f(t,\cdot)\|_{C^{\gamma}(B_R)}\le C_R(t-s)^{-(1+\gamma)/2}\|f\|_{\infty}$ for any $t\in (s,s+\delta]$.
Moreover, by interpolation from \eqref{st-gr} it follows that
$\|u_f(t,\cdot)\|_{C^{\gamma}_b(\Rd)}\le C(t-s)^{-\gamma/2}\|f\|_{\infty}$ for any $t\in (s,s+\delta]$.
From these estimates, adding and subtracting $\psi(t,y,u(t,x),\nabla_xu(t,x))$, we deduce that
$|\psi_u(t,x)-\psi_u(t,y)|\leq C\|f\|_\infty (t-s)^{-\beta-\frac{\gamma}{2}}|x-y|^{\gamma}$
for any $t \in (s, s+\delta]$, $x,y\in\Rd$ such that $|x-y|\le R$ and some positive constant $C$, depending on $R$ and $u$.
As a byproduct, $\|\psi_u(t,\cdot)\|_{C^{\gamma}(B_R)}\le \widetilde C(t-s)^{-\beta-\frac{\gamma}{2}}\|f\|_{\infty}$
for any $t \in (s, s+\delta]$ and some positive constant $\widetilde C$, depending on $R$ and $u$.
Now, using Proposition \ref{smoth_v} we conclude that $u\in C^{1,2}((s,s+\delta]\times\Rd)\cap C^{0,2+\theta}_{\rm loc}((s,s+\delta]\times\Rd)$
for any $\theta<\gamma$, if $\gamma\le\alpha$, and for $\theta=\gamma$ otherwise.
\end{proof}

\begin{rmk}
\label{rem-3.5}
{\rm
Suppose that \eqref{loc_Lip} is replaced by the condition
$|\psi(\cdot,\cdot,u_1,v_1)-\psi(\cdot,\cdot,u_2,v_2)|\le L_R(|u_1-u_2|+|v_1-v_2|)$
in $[s,T]\times\Rd$, for any $R>0$, $u_1, u_2\in [-R,R]$, $v_1, v_2\in B_R$ and some positive constant $L_R$.
Then, the proof of the previous theorem can be repeated verbatim with $Y_{\delta}=C^{0,1}_b([s,s+\delta]\times \Rd)$,
endowed with the natural norm, and we can show that the mild solution to problem \eqref{n-sm-pb} belongs to $C^{0,1}_b([s,s+\delta]\times\Rd)$ and
$\|u_f\|_{C^{0,1}_b([s,s+\delta]\times\Rd)}\le \widetilde C_{\delta}\|f\|_{C_b^1(\Rd)}$
for some positive constant $\widetilde C_{\delta}$, independent of $f$.}
\end{rmk}

We now provide some sufficient conditions for the mild solution to problem \eqref{n-sm-pb} to exist in the large. Such conditions will
be crucial to define the nonlinear evolution operator associated with the Cauchy problem \eqref{n-sm-pb}.
We introduce the following additional assumptions.
\begin{hyp}\label{glob2}
\begin{enumerate}[\rm (i)]
\item
For any $R>0$ there exists a positive constant $L_R$ such that
$|\psi(t,x,u_1,v_1)-\psi(t,x,u_2,v_2)|\le L_R(|u_2-u_1|+|v_2-v_1|)$
for any $t\in [s,T]$, $x\in\Rd$, $u_1, u_2\in [-R,R]$ and $v_1, v_2\in\Rd$;
\item
for any $\tau>s\in I$ there exist positive constants $k_0$, $k_1$ and $a$, and a function $\tilde\varphi\in C^2(\R^d)$ with non-negative values and blowing up at infinity such that
$u\psi(t,x,u,v) \leq k_0(1+u^2)+k_1|u||v|$ and $\mathcal{A}\tilde\varphi+k_1|\nabla\tilde\varphi|\leq a\tilde\varphi$ in $\Rd$
for any $t\in [s,\tau]$, $x,v\in\Rd$ and $u\in\R$.
\end{enumerate}
\end{hyp}

In the rest of this section, for any $p\in [p_0,+\infty)$ and $T>s$ we denote by
$[\psi]_{p,T}$ the supremum over $(s,T)$ of the function $\sqrt{t-s}\,\|\psi(t,\cdot,0,0)\|_{L^p(\Rd,\mu_t)}$. $[\psi]_{\infty,T}$ is defined similarly,
 replacing $L^p(\Rd,\mu_t)$ by $C_b(\Rd)$.

\begin{thm}
\label{prop-3.8}
Assume that Hypotheses $\ref{base}$, $\ref{hyp-1}(ii)$, $\ref{glob2}$ and condition \eqref{hyp-holder} are satisfied. Then, for any $f \in C_b(\Rd)$, the classical solution $u_f$
to problem \eqref{n-sm-pb} exists in $[s,T]$. If, further, the constant in Hypothesis $\ref{glob2}(i)$ is independent of $R$, then for any $p\in [p_0,+\infty]$,
\begin{align}
&\sup_{t \in (s,T)}(\|u_f(t,\cdot)\|_{L^p(\Rd,\mu_t)}
+\sqrt{t-s}\,\|\nabla_x u_f (t,\cdot)\|_{L^p(\Rd, \mu_t)})\notag\\
\le &C_{T-s}(\|f\|_{L^p(\Rd, \mu_s)}
+(\sqrt{T-s}+1)[\psi]_{p,T}),
\label{dip-p-weak-0}\\[2mm]
&\sup_{t \in (s,T)}(\|u_f(t,\cdot)-u_g(t,\cdot)\|_{L^p(\Rd,\mu_t)}+\sqrt{t-s}\,\|\nabla_x u_f (t,\cdot)-\nabla_x u_g(t,\cdot)\|_{L^p(\Rd, \mu_t)})\notag\\
\le &C_{T-s}\|f-g\|_{L^p(\Rd, \mu_s)}
\label{dip-p-weak}
\end{align}
for every $f,g \in C_b(\Rd)$, where $C_{\tau}=(\sqrt{\tau}+1)e^{d_1\tau^{3/2}+d_2}$
for some positive constants $d_1$ and $d_2$.
\end{thm}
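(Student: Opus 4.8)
The plan is to establish the global existence statement first and then derive the two estimates. Local well-posedness is already available: Theorem~\ref{exi_cb} and Corollary~\ref{coro-classica} provide, for $f\in C_b(\Rd)$, a unique classical solution $u_f$ of \eqref{n-sm-pb} on a small interval $[s,s+\delta]$, the singular Lipschitz condition in Hypothesis~\ref{hyp-1}(i) and the H\"older-type condition required there being consequences, on $(s,T]$, of Hypothesis~\ref{glob2}(i) and of \eqref{hyp-holder}. Let $[s,\tau_0)$ denote the maximal interval on which a classical, bounded, continuously differentiable solution exists. First I would establish an a priori sup bound on $[s,\tau_0)$ that is independent of $\tau_0$: applying $D_t$ and $\mathcal A(t)$ to $w:=u_f^2\in C^{1,2}((s,\tau_0)\times\Rd)$, using $\mathcal A(t)(u_f^2)=2u_f\mathcal A(t)u_f+2\langle Q\nabla_x u_f,\nabla_x u_f\rangle\ge 2u_f\mathcal A(t)u_f$, the elementary identity $2k_1|u_f||\nabla_x u_f|=k_1|\nabla_x(u_f^2)|$ and the bound $u\psi(t,x,u,v)\le k_0(1+u^2)+k_1|u||v|$ of Hypothesis~\ref{glob2}(ii), one obtains the pointwise inequality $D_tw\le \mathcal A(t)w+k_1|\nabla_x w|+2k_0w+2k_0$ in $(s,\tau_0)\times\Rd$. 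Then, after an exponential rescaling making the zeroth-order coefficient strictly negative, I would compare $w$ with a supersolution of the form $e^{\mu(t-s)}(\|f\|_\infty^2+1+\varepsilon\tilde\varphi(x))+2k_0$ for a suitable $\mu\ge 0$ — here the condition $\mathcal A(t)\tilde\varphi+k_1|\nabla\tilde\varphi|\le a\tilde\varphi$ is exactly what is needed — via the parabolic maximum principle on the cylinders $[s,\tau']\times B_R$ with $\tau'<\tau_0$, and then let $R\to+\infty$ (legitimate since $\tilde\varphi$ blows up at infinity while $w$ is bounded on $[s,\tau']\times\Rd$) and $\varepsilon\to 0$, obtaining $\|u_f(t,\cdot)\|_\infty\le M$ on $[s,\tau_0)$, with $M$ depending only on $T-s$ and $\|f\|_\infty$. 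With this bound in hand, I would return to $u_f(t,\cdot)=G(t,t')u_f(t',\cdot)+\int_{t'}^tG(t,r)\psi_{u_f}(r,\cdot)\,dr$ (with $t'\in(s,\tau_0)$ fixed), estimate $\|\psi_{u_f}(r,\cdot)\|_\infty\le\|\psi(\cdot,\cdot,0,0)\|_\infty+L_M(M+\|\nabla_x u_f(r,\cdot)\|_\infty)$ through Hypothesis~\ref{glob2}(i), and use the pointwise gradient estimate \eqref{point_p_D} together with the generalized Gronwall lemma to bound $\|\nabla_x u_f(t,\cdot)\|_\infty$ on $[(s+\tau_0)/2,\tau_0)$ by a constant independent of $t$. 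If $\tau_0<T$, restarting \eqref{n-sm-pb} at a time $t_1$ close to $\tau_0$ with datum $u_f(t_1,\cdot)\in C^1_b(\Rd)$ (the hypotheses holding then in their non-singular form, cf.\ Remark~\ref{rem-3.5} and Corollary~\ref{coro-classica}) yields a solution whose existence time is bounded below in terms of $\|u_f(t_1,\cdot)\|_{C^1_b(\Rd)}$ only; by uniqueness this extends $u_f$ beyond $\tau_0$, a contradiction, so $\tau_0=T$.

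For the estimates, assume in addition $L_R\equiv L$. For $p\in[p_0,+\infty)$ set $\Phi_p(t)=\|u_f(t,\cdot)\|_{L^p(\Rd,\mu_t)}$ and $\Psi_p(t)=\sqrt{t-s}\,\|\nabla_x u_f(t,\cdot)\|_{L^p(\Rd,\mu_t)}$, which are finite and continuous on $(s,T)$. Taking $L^p(\Rd,\mu_t)$-norms in \eqref{defi_mild}, using that $G(t,r)$ is a contraction from $L^p(\Rd,\mu_r)$ to $L^p(\Rd,\mu_t)$, that integrating \eqref{point_p_D} against $\mu_t$ and invoking \eqref{inv_pro} yields $\|\nabla_x G(t,r)h\|_{L^p(\Rd,\mu_t)}\le C(1+(t-r)^{-1/2})\|h\|_{L^p(\Rd,\mu_r)}$ with $C$ independent of $p$ (the exponential factors being harmless over the bounded interval, and $\sigma_p=\sigma_2$ for $p\ge2$), Minkowski's inequality, and the bound $\|\psi_{u_f}(r,\cdot)\|_{L^p(\Rd,\mu_r)}\le(r-s)^{-1/2}[\psi]_{p,T}+L(1+(r-s)^{-1/2})(\Phi_p(r)+\Psi_p(r))$ (from Hypothesis~\ref{glob2}(i) and the definition of $[\psi]_{p,T}$), I would obtain a single integral inequality for $g_p:=\Phi_p+\Psi_p$,
\begin{equation*}
g_p(t)\le c(\sqrt{T-s}+1)\Big(\|f\|_{L^p(\Rd,\mu_s)}+(\sqrt{T-s}+1)[\psi]_{p,T}\Big)+c\int_s^t\kappa_{T-s}(t,r)\,g_p(r)\,dr,
\end{equation*}
with $\kappa_{T-s}(t,r)\le C_{T-s}(1+(t-r)^{-1/2})(1+(r-s)^{-1/2})$; the part of this kernel produced by the weight $\sqrt{t-s}$ in front of $\nabla_x u_f$ integrates in $r$ to a term of order $(t-s)^{3/2}$, which is where the exponent $\tau^{3/2}$ in $C_\tau$ comes from. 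The generalized Gronwall lemma (\cite{Gronwall}) then gives $g_p(t)\le C_{T-s}(\|f\|_{L^p(\Rd,\mu_s)}+(\sqrt{T-s}+1)[\psi]_{p,T})$ with $C_\tau=(\sqrt\tau+1)e^{d_1\tau^{3/2}+d_2}$, which is \eqref{dip-p-weak-0}.

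Estimate \eqref{dip-p-weak} follows by running the same scheme for $u_f-u_g$: its forcing term is $\psi_{u_f}-\psi_{u_g}$, with $\|\psi_{u_f}(r,\cdot)-\psi_{u_g}(r,\cdot)\|_{L^p(\Rd,\mu_r)}\le L(\|u_f(r,\cdot)-u_g(r,\cdot)\|_{L^p(\Rd,\mu_r)}+\|\nabla_x(u_f-u_g)(r,\cdot)\|_{L^p(\Rd,\mu_r)})$, so no $[\psi]$ term survives and the same Gronwall estimate yields the claim. The case $p=+\infty$ is obtained by letting $p\to+\infty$ in \eqref{dip-p-weak-0} and \eqref{dip-p-weak} — all the constants are independent of $p$, while $\|\cdot\|_{L^p(\Rd,\mu_t)}\to\|\cdot\|_\infty$ on continuous functions and $[\psi]_{p,T}\to[\psi]_{\infty,T}$ — or, alternatively, by repeating the argument of the preceding paragraph with $C_b$-norms, which amounts to globalizing over $[s,T]$ the estimate \eqref{st-gr} of Theorem~\ref{exi_cb}.

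The two places I expect to require most care are: first, in the global existence part, genuinely ruling out the blow-up of $\|\nabla_x u_f(t,\cdot)\|_\infty$ as $t\uparrow\tau_0$ — this is not automatic from the sup bound, precisely because $\psi$ involves $\nabla_x u_f$, and it is the reason one needs the Gronwall argument on the mild formula before restarting with a uniform existence time; and second, in the quantitative part, coupling the inequalities for $\Phi_p$ and $\Psi_p$ into a single weakly singular Gronwall inequality whose kernel absorbs both singularities $(t-r)^{-1/2}$ (from the gradient estimate) and $(r-s)^{-1/2}$ (from $\psi(\cdot,\cdot,0,0)$ and from $\nabla_x u_f$ near $t=s$), and then extracting from it the exact form $(\sqrt\tau+1)e^{d_1\tau^{3/2}+d_2}$ of the constant, with its distinctive exponent $\tau^{3/2}$.
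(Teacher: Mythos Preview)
Your outline is correct and reaches the result, but both stages are executed differently from the paper. For global existence the paper does not square $u_f$: it works directly with $v_n:=e^{-\lambda(t-s)}u_f-n^{-1}\tilde\varphi$ (with $\lambda>a+k_0$), locates a maximum of $v_n$ on $[s,s+b]\times\Rd$ (it exists because $\tilde\varphi\to+\infty$), and at that point combines $u\psi\le k_0(1+u^2)+k_1|u||v|$ with $\A\tilde\varphi+k_1|\nabla\tilde\varphi|\le a\tilde\varphi$ to force a bound; then the same for $-u_f$. Your $w=u_f^2$ route is equally valid, though the supersolution you wrote needs a small adjustment of the constant term to actually close. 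The gradient bound and continuation argument match. The more substantive divergence is Step~2: you set up a weakly singular Gronwall inequality for $\Phi_p+\Psi_p$ with kernel $\sim(1+(t-r)^{-1/2})(1+(r-s)^{-1/2})$, whereas the paper introduces the exponentially weighted norm $\opnorm{v}_p=\sup_{t}e^{-\omega(t-s)}(\|v(t,\cdot)\|_{L^p(\mu_t)}+\sqrt{t-s}\|\nabla_xv(t,\cdot)\|_{L^p(\mu_t)})$, bounds each integral against an explicit function of $\omega$ (via $\int_s^te^{\omega(r-t)}(r-s)^{-1/2}dr\le\sqrt{8/\omega}$, obtained by splitting at $s+\delta$ and minimising over $\delta$), and then chooses $\omega$ so that the coefficient of $\opnorm{u_f-u_g}_p$ on the right is $\le 1/2$. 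This absorption argument avoids your doubly singular, non-convolution kernel and yields the explicit shape of $C_\tau$ directly through $e^{\omega(T-s)}$; your Gronwall route works but the Henry lemma does not apply verbatim to such a kernel, so extracting the stated constant would take extra care (and note that \cite{Gronwall} is the classical 1919 lemma, not the singular-kernel version you actually need).
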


\begin{proof}
We split the proof into two steps.

{\em Step 1.} Here, we prove that, for any $f\in C_b(\Rd)$, $u_f$ is defined in the whole $[s,T]$.
For this purpose, we fix $f\in C_b(\Rd)$, denote by $[s,\tau_f)$ the maximal time domain where $u_f$ is defined and assume, by contradiction, that
$\tau_f<T$.
 We are going to prove that $u_f$ is bounded in $[s,\tau_f)\times\Rd$. Once this is proved,
we can use Hypotheses \ref{glob2}(i) to deduce, adding and subtracting $\psi(t,x,0,0)$, that
$|\psi(t,x,u_f(t,x),v)|\le C(1+|v|)$
for $t \in [s,T]$, $x,v\in \Rd$ and some positive constant $C$, which depends on $\|u_f\|_{C_b([s,\tau_f)\times \Rd)}$ and $\|\psi(\cdot,\cdot,0,0)\|_{C_b([s,T]\times\Rd)}$.
Applying the same arguments as in Step 2 of the proof of Theorem \ref{exi_cb}, we can show that also the function $t\mapsto \sqrt{t-s}\|\nabla_xu_f(t,\cdot)\|_{\infty}$
is bounded in $[s,\tau_f)\times\Rd$. This is enough to infer that $u_f$ can be extended beyond $\tau_f$, contradicting the maximality of the interval $[s,\tau_f)$.

To prove that $u_f$ is bounded in $(s,\tau_f)\times \Rd$, we fix $b\in(0,\tau_f-s)$, $\lambda>a+k_0$ and we set
$v_n(t,x):=e^{-\lambda(t-s)}u_f(t,x)-n^{-1}\tilde\varphi(x)$ for any $(t,x)\in [s,s+b]\times\Rd$.
A straightforward computation shows that
\begin{align}
D_tv_n-\A v_n
=& e^{-\lambda (\cdot-s)}\psi(\cdot,\cdot,e^{\lambda(\cdot-s)}(v_n+n^{-1}\tilde\varphi),e^{\lambda(\cdot-s)}(\nabla_xv_n+n^{-1}\nabla\tilde\varphi))\notag\\
&-\lambda(v_n+n^{-1}\tilde\varphi)+n^{-1}\A\tilde\varphi,
\label{mercato}
\end{align}
in $(s,s+b]\times\Rd$. Since $u_f$ is bounded in $[s,s+b]\times\Rd$ and $\tilde\varphi$ blows up at infinity,
the function $v_n$ admits a maximum point $(t_n,x_n)$. If $v_n(t_n,x_n)\leq0$ for any $n$,
then $u_f\leq0$ in $[s,s+b]\times\Rd$. Assume that $v_n(t_n,x_n)>0$ for some $n$. If $t_n=s$,
then $v_n(t_n,x_n)\leq \sup_{\Rd}f$. If $t_n>s$ then $D_tv_n(t_n,x_n)-\A(t_n)v_n(t_n,x_n)\geq0$,
so that, multiplying both the sides of \eqref{mercato} by $v_n(t_n,x_n)+n^{-1}\tilde\varphi(x_n)>0$ and using Hypotheses \ref{glob2}(ii) we get
$0\le (-\lambda+k_0+a)(v_n(t_n,x_n)+n^{-1}\tilde\varphi(x_n))^2+k_0$,
which clearly implies that, also in this case, $u$ is bounded from above in $[s,s+b]$ by a constant, independent of $b$.

Repeating the same arguments with $u_f$ being replaced by $-u_f$, we conclude that $u_f$ is bounded also from below by a positive constant
independent of $b$. Since $b$ is arbitrary, it follows that $\|u_f\|_{C_b((s,\tau_f)\times\Rd)}<+\infty$ as claimed.

{\em Step 2.} Fix $f, g\in C_b(\Rd)$ $p\ge p_0$ and let $\opnorm{\cdot}_p$ be the norm defined by
$\opnorm{v}_p=\sup_{t\in (s,T)}e^{-\omega(t-s)}(\|v(t,\cdot)\|_{L^p(\Rd,\mu_t)}+\sqrt{t-s}\|\nabla_xv(t,\cdot)\|_{L^p(\Rd,\mu_t)})$ on smooth functions $v$, where $\omega$
is a positive constant to be chosen later on and to fix the ideas we assume that $p<+\infty$.
From Hypothesis \ref{glob2}(i), where $L_R$ is replaced by a constant $L$, it follows that
$\|\psi_{u_g}(r,\cdot)-\psi_{u_f}(r,\cdot)\|_{L^p(\Rd,\mu_r)}\le L\|u_g(r,\cdot)-u_f(r,\cdot)\|_{W^{1,p}(\Rd,\mu_r)}$
for any $r \in (s,T]$. Hence, recalling that each operator $G(t,r)$ is a contraction from $L^p(\Rd,\mu_r)$ to $L^p(\Rd,\mu_t)$ and using the second pointwise gradient
estimate in \eqref{point_p_D} and the invariance property of the family $\{\mu_t: t\in I\}$,
we conclude that
\begin{align}\label{commissioni}
&\opnorm{u_f-u_g}_p\notag\\
\le & \opnorm{G(\cdot,s)(f-g)}_p+
L\opnorm{u_f-u_g}_p\sup_{t\in (s,T)}\int_s^te^{\omega(r-t)}\bigg (1+\frac{1}{\sqrt{r-s}}\bigg )dr\notag\\
&+LC_0\opnorm{u_f-u_g}_p\sup_{t\in (s,T)}\sqrt{t-s}\int_s^te^{\omega(r-t)}\bigg (1+\frac{1}{\sqrt{t-r}}\bigg )\bigg (1+\frac{1}{\sqrt{r-s}}\bigg )dr\notag\\
\le &[1+C_0(1+\sqrt{T-s})]\|f-g\|_{L^p(\Rd,\mu_s)}\notag\\
&+L\opnorm{u_f-u_g}_p\bigg [(1+C_0\sqrt{T-s})\bigg (\frac{1}{\omega}
+\int_s^t\frac{e^{\omega(r-t)}}{\sqrt{r-s}}dr\bigg )+\frac{\sqrt{\pi}}{\sqrt{\omega}}
C_0\sqrt{T-s}\notag\\
&\qquad\qquad\qquad\quad\;\;\;\,+C_0
\sup_{t\in (s,T)}\sqrt{t-s}\int_s^t\!\frac{e^{\omega(r-t)}dr}{\sqrt{t-r}\sqrt{r-s}}\bigg ].
\end{align}

To estimate the integral terms in the last side of \eqref{commissioni}, we fix $\delta>0$ and observe that
\begin{align}
\int_s^t \frac{e^{\omega(r-t)}}{\sqrt{r-s}}dr
=\int_s^{s+\delta} \frac{e^{\omega(r-t)}}{\sqrt{r-s}}dr+\left (\int_{s+\delta}^t \frac{e^{\omega(r-t)}}{\sqrt{r-s}}dr\right )^+\le
2\sqrt{\delta}+\frac{1}{\sqrt{\delta}\omega}.
\label{acqua-6a}
\end{align}
Hence, minimizing over $\delta>0$ we conclude that the left-hand side of \eqref{acqua-6a} is bounded from above by $\sqrt{8}\omega^{-1/2}$.
Splitting $\sqrt{t-s}\le \sqrt{t-r}+\sqrt{r-s}$ and arguing as above, also the last term in square brackets in the last side of \eqref{commissioni} can be estimated by $(\sqrt{8}+\sqrt{\pi})\omega^{-1/2}$.
It thus follows that
\begin{align*}
\opnorm{u_f-u_g}_p\le &[1+C_0(1+\sqrt{T-s})]\|f-g\|_{L^p(\Rd,\mu_s)}\\
&+L(c_{T-s}\omega^{-1/2}+(1+C_0\sqrt{T-s})\omega^{-1})\opnorm{u_f-u_g}_p,
\end{align*}
where $c_{\tau}=(\sqrt{8}+\sqrt{\pi})C_0(\sqrt{\tau}+1)+\sqrt{8}$.
Choosing $\omega$ such that $c_{T-s}\omega^{-1/2}+(1+C_0\sqrt{T-s})\omega^{-1}\le (2L)^{-1}$, we obtain
\begin{eqnarray*}
\opnorm{u_f-u_g}_p\le 2[1+C_0(1+\sqrt{T-s})]\|f-g\|_{L^p(\Rd,\mu_s)}
\end{eqnarray*}
and estimate \eqref{dip-p-weak} follows at once. Estimate \eqref{dip-p-weak-0} can be proved likewise.
Hence, the details are omitted.
\end{proof}

As a consequence of Theorem \ref{prop-3.8} we prove the existence of a mild solution to problem \eqref{n-sm-pb} in the time domain $(s,T)$ when $f\in L^p(\Rd,\mu_s)$,
that is a function $u_f\in L^p((s,T)\times \Rd,\mu)\cap W^{0,1}_p(J\times \Rd,\mu)$, for any $J\Subset (s,T]$, such that $u_f(t,\cdot)\in W^{1,p}(\Rd,\mu_t)$ for almost every $t\in (s,T]$ and, for such values of $t$, the equality
\begin{align*}
u_f(t,x)= (G(t,s)f)(x)+\int_s^t(G(t,r)\psi_u(r,\cdot))(x)dr,
\end{align*}
holds true in $\Rd\setminus A_t$, where $A_t$ is negligible with respect to the measure $\mu_t$ (or, equivalently, with respect to the restriction of the Lebesgue measure
to the Borel $\sigma$-algebra in $\Rd$).

\begin{coro}
\label{coro-Lp}
Under all the assumptions of Theorem $\ref{prop-3.8}$, for any $f\in L^p(\Rd,\mu_s)$ $(p\ge p_0)$ there exists a unique
mild solution to the Cauchy problem \eqref{n-sm-pb}.
The function $u_f$ satisfies estimates \eqref{dip-p-weak-0} and \eqref{dip-p-weak} with the supremum being replaced by the essential supremum and,
as a byproduct, $u_f\in W^{0,1}_p((s,T)\times\Rd,\mu)$, if $p<2$, and $u_f\in W^{0,1}_q((s,T)\times\Rd,\mu)$ for any $q<2$, otherwise.
Finally, if there exists $\gamma\in(0,1)$ such that
\begin{align}
\label{tazza}
|\psi(t,x,\xi,\eta)-\psi(t,y,\xi,\eta)|\leq C_{J,R}(1+|\xi|+|\eta|)|x-y|^\gamma,
\end{align}
for any $t\in J$, $x,y\in B_R$, $\eta\in \Rd$, $\xi\in\R$, $J\Subset (s,T]$, $R>0$ and a constant $C_{J,R}>0$,
then, for any $f\in L^p(\Rd,\mu_s)$ and almost every $t\in (s,T)$, $u_f(t,\cdot)$ belongs to $W^{2,p}_{\rm loc}(\Rd)$. Moreover, $u_f\in W^{1,2}_{p,\rm loc}((s,T)\times\Rd)$ and satisfies
the equation $D_tu_f=\A u_f+\psi_{u_f}$.
\end{coro}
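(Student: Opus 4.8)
The plan is to construct $u_f$ as the limit, in a weighted norm, of the classical solutions provided by Theorem \ref{prop-3.8} with $C_b$-data, and to inherit all the asserted properties by approximation. Fix $f\in L^p(\Rd,\mu_s)$ and a sequence $(f_n)\subset C_b(\Rd)$ with $f_n\to f$ in $L^p(\Rd,\mu_s)$. Each $u_{f_n}$ is a classical solution on $[s,T]$ obeying \eqref{dip-p-weak-0} and \eqref{dip-p-weak}, and \eqref{dip-p-weak} shows that $(u_{f_n})$ is a Cauchy sequence for the norm $v\mapsto\sup_{t\in(s,T)}(\|v(t,\cdot)\|_{L^p(\Rd,\mu_t)}+\sqrt{t-s}\,\|\nabla_xv(t,\cdot)\|_{L^p(\Rd,\mu_t)})$; denoting its limit by $u_f$, we get $u_f(t,\cdot)\in W^{1,p}(\Rd,\mu_t)$ for a.e.\ $t$ and, by Fubini's theorem, $u_f\in L^p((s,T)\times\Rd,\mu)\cap W^{0,1}_p(J\times\Rd,\mu)$ for every $J\Subset(s,T]$. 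To check that $u_f$ is a mild solution I pass to the limit in $u_{f_n}(t,\cdot)=G(t,s)f_n+\int_s^tG(t,r)\psi_{u_{f_n}}(r,\cdot)\,dr$: contractivity of $G(t,s)$ from $L^p(\Rd,\mu_s)$ to $L^p(\Rd,\mu_t)$ gives $G(t,s)f_n\to G(t,s)f$ in $L^p(\Rd,\mu_t)$; the global Lipschitz continuity of $\psi$ in its last two variables (Hypothesis \ref{glob2}(i)) gives $\|\psi_{u_{f_n}}(r,\cdot)-\psi_{u_f}(r,\cdot)\|_{L^p(\Rd,\mu_r)}\le L\|u_{f_n}(r,\cdot)-u_f(r,\cdot)\|_{W^{1,p}(\Rd,\mu_r)}$, which is dominated by a constant times $(r-s)^{-1/2}$ times the norm above of $u_{f_n}-u_f$; since $(r-s)^{-1/2}$ is integrable on $(s,T)$ and each $G(t,r)$ is a contraction, the integral term converges in $L^p(\Rd,\mu_t)$, so the mild equation holds in $L^p(\Rd,\mu_t)$ for a.e.\ $t$. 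Letting $n\to+\infty$ in \eqref{dip-p-weak-0}--\eqref{dip-p-weak} gives those estimates for $u_f$ with the essential supremum in place of the supremum.

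Uniqueness and the integrability assertion are then routine. If $u_1,u_2$ are two mild solutions with the same datum, their difference $w$ satisfies $w(t,\cdot)=\int_s^tG(t,r)(\psi_{u_1}(r,\cdot)-\psi_{u_2}(r,\cdot))\,dr$, and repeating the computation of Step 2 of the proof of Theorem \ref{prop-3.8} with the weighted norm $\opnorm{\cdot}_p$ introduced there yields $\opnorm{w}_p\le L\big(c_{T-s}\omega^{-1/2}+(1+C_0\sqrt{T-s})\omega^{-1}\big)\opnorm{w}_p$, hence $w\equiv0$ once $\omega$ is large; the only care needed is that a mild solution a priori has finite $\opnorm{\cdot}_p$ only away from $t=s$, which is handled by first carrying out the estimate on a short interval $[s,s+\delta]$ — where it produces a genuine contraction and forces $w\equiv0$ there — and then iterating on $[s+\delta,T]$, using the membership $w\in W^{0,1}_p(J\times\Rd,\mu)$. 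For the integrability, \eqref{dip-p-weak-0} gives $\|u_f(t,\cdot)\|_{L^p(\Rd,\mu_t)}\le C$ and $\|\nabla_xu_f(t,\cdot)\|_{L^p(\Rd,\mu_t)}\le C(t-s)^{-1/2}$ for a.e.\ $t$, so, as each $\mu_t$ is a probability measure and $T-s<\infty$, $\int_s^T\!\int_{\Rd}\big(|u_f|^q+|\nabla_xu_f|^q\big)\,d\mu_t\,dt<+\infty$ whenever $q<2$; this yields $u_f\in W^{0,1}_p((s,T)\times\Rd,\mu)$ when $p<2$ and, since $L^p(\Rd,\mu_t)\hookrightarrow L^q(\Rd,\mu_t)$ for $q\le p$, $u_f\in W^{0,1}_q((s,T)\times\Rd,\mu)$ for every $q<2$ when $p\ge2$.

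For the last part, assume \eqref{tazza} and fix $J\Subset J'\Subset(s,T]$ and $0<R<R'$. On $J'\times B_{R'}$ the measure $\mu$ is comparable to the Lebesgue measure — the density of $\mu_t$ with respect to the Lebesgue measure being positive and continuous, hence bounded away from $0$ and $\infty$ on compact sets uniformly in $t\in J'$ — so \eqref{dip-p-weak-0} shows that $(u_{f_n})$ is bounded in $L^p(J'\times B_{R'})$ together with its spatial gradient (the weight $\sqrt{t-s}$ being bounded below on $J'$); moreover \eqref{tazza}, Hypothesis \ref{glob2}(i) and Hypothesis \ref{hyp-1}(ii) give $|\psi_{u_{f_n}}(r,x)|\le C(1+|u_{f_n}(r,x)|+|\nabla_xu_{f_n}(r,x)|)$ on $J'\times B_{R'}$, so $(\psi_{u_{f_n}})$ is bounded in $L^p(J'\times B_{R'})$ and, as in the existence step, converges there strongly to $\psi_{u_f}$. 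Each $u_{f_n}$ is a classical — hence $W^{1,2}_p(J'\times B_{R'})$ — solution of $D_tu_{f_n}=\A u_{f_n}+\psi_{u_{f_n}}$, and, by Hypothesis \ref{base}(i)--(ii), on $J'\times B_{R'}$ the coefficients $q_{ij}$ are continuous, the $b_i$ are bounded and $Q\ge\kappa_0 I$; therefore the interior $L^p$ parabolic estimates bound $(u_{f_n})$ in $W^{1,2}_p(J\times B_R)$. Extracting a weakly convergent subsequence and identifying its limit with $u_f$ (forced by the strong $L^p(J'\times B_{R'})$-convergence), we get $u_f\in W^{1,2}_{p,\rm loc}((s,T)\times\Rd)$, in particular $u_f(t,\cdot)\in W^{2,p}_{\rm loc}(\Rd)$ for a.e.\ $t$, and letting $n\to+\infty$ in the equations for $u_{f_n}$ gives $D_tu_f=\A u_f+\psi_{u_f}$ a.e. The main obstacle is exactly this last step: obtaining \emph{uniform} interior parabolic $L^p$-bounds (which relies on the local regularity of the coefficients just recorded, on the uniform-in-time comparability of $\mu$ with the Lebesgue measure on bounded sets, and on the growth control of $\psi_{u_{f_n}}$ coming from \eqref{tazza}) and then correctly identifying the weak limit.
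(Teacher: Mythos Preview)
Your proof of the existence, uniqueness and integrability parts follows essentially the same approximation scheme as the paper; the only variation is that for uniqueness you recycle the weighted-norm contraction from Step~2 of Theorem~\ref{prop-3.8} rather than the generalized Gronwall argument the paper points to, which is a harmless alternative.

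For the regularity part you take a genuinely different, and shorter, route. The paper works entirely through the mild formula: it first proves pointwise-in-$t$ mapping estimates $\|G(t,r)\|_{\mathcal L(L^p(\Rd,\mu_r),W^{1+\theta,p}(B_{R+1}))}\le c(t-r)^{-(1+\theta)/2}$ and $\|G(t,r)\|_{\mathcal L(W^{\theta,p}(B_{R+1}),W^{2,p}(B_R))}\le c(t-r)^{-1+\theta/2}$, uses the H\"older condition \eqref{tazza} together with a delicate interpolation to show that $(\psi_{u_{f_n}}(t,\cdot))$ is Cauchy in $W^{\theta,p}(B_{R+1})$, and from this obtains that $(u_{f_n}(t,\cdot))$ is \emph{strongly} Cauchy in $W^{2,p}(B_R)$; the equation then makes $(D_tu_{f_n})$ Cauchy in $L^p_{\rm loc}$. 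You instead invoke the standard interior parabolic $L^p$-estimates for $D_tu-\A u=g$ directly on $J'\times B_{R'}$: boundedness of $(u_{f_n})$, $(\nabla_xu_{f_n})$ and hence $(\psi_{u_{f_n}})$ in $L^p(J'\times B_{R'})$ gives boundedness of $(u_{f_n})$ in $W^{1,2}_p(J\times B_R)$, and weak compactness plus identification of the limit finishes. This is correct and more elementary; it is worth noting that in your argument the H\"older hypothesis \eqref{tazza} is never actually used (the growth bound $|\psi_{u_{f_n}}|\le C(1+|u_{f_n}|+|\nabla_xu_{f_n}|)$ follows already from the global Lipschitz condition and the boundedness of $\psi(\cdot,\cdot,0,0)$), whereas the paper's strong-convergence scheme through fractional Sobolev regularity of $\psi_{u_{f_n}}$ does need it. The trade-off is that the paper is self-contained --- its interior estimates (Proposition~\ref{prop-A3}) and mild-formula machinery are all internal --- while you import the inhomogeneous interior $W^{1,2}_p$-estimate as a black box; and the paper obtains strong $W^{0,2}_{p,\rm loc}$-convergence of the approximants, which is slightly more than you get, though not needed for the stated conclusion.
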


\begin{proof}
Fix $f\in L^p(\Rd,\mu_s)$ and let $(f_n)\subset C_b(\Rd)$ be a sequence converging to $f$ in $L^p(\Rd,\mu_s)$. By \eqref{dip-p-weak},
$(u_{f_n}(t,\cdot))$ is a Cauchy sequence in $W^{1,p}(\Rd,\mu_t)$ for any $t\in (s,T]$.
Hence, there exists a function $v$ such that $u_{f_n}(t,\cdot)$ converges to $v(t,\cdot)$
in $W^{1,p}(\Rd,\mu_t)$ for any $t\in (s,T]$. Moreover, writing \eqref{dip-p-weak-0}, with $f$ being replaced by $f_n$, and letting $n$
tend to $+\infty$ we deduce that $v$ satisfies \eqref{dip-p-weak-0} as well.

Next, using \eqref{dip-p-weak} we can estimate
\begin{align*}
\|u_{f_n}-u_{f_m}\|_{L^p((s,T)\times\Rd,\mu)}^p=&\int_s^T\|u_{f_n}(t,\cdot)-u_{f_m}(t,\cdot)\|_{L^p(\Rd,\mu_t)}^pdt\\
\le &C_{T-s}^p(T-s)\|f_n-f_m\|_{L^p(\Rd,\mu_s)}^p
\end{align*}
and
\begin{align*}
\|\nabla_xu_{f_n}-\nabla_xu_{f_m}\|_{L^q((s,T)\times\Rd,\mu)}^q=&\int_s^T\|\nabla_xu_{f_n}(t,\cdot)-\nabla_xu_{f_m}(t,\cdot)\|_{L^q(\Rd,\mu_t)}^qdt\\
\le &\frac{2C_{T-s}^q}{2-q}(T-s)^{1-\frac{q}{2}}\|f_n-f_m\|_{L^q(\Rd,\mu_s)}^q
\end{align*}
for any $q\in [1,2)$, if $p\ge 2$ and for $p=q$ otherwise. Hence,
recalling that $L^{q}(\Rd,\mu_t)\hookrightarrow L^p(\Rd,\mu_t)$ for any $t\in I$, we conclude that
the sequence $(u_{f_n})$ converges in $L^p((s,T)\times\Rd,\mu)\cap W^{0,1}_q((s,T)\times\Rd,\mu)$ to a function, which we denote
by $u_f$.
Clearly, $v(t,\cdot)=u_f(t,\cdot)$ almost everywhere in $\Rd$ for almost every $t\in (s,T)$. Letting $n$ tend to $+\infty$ in the formula
\eqref{defi_mild}, with $f_n$ replacing $f$,
we deduce that $u_f$ is a mild solution to problem \eqref{n-sm-pb}. The uniqueness follows, arguing as in the proof of Theorem \ref{exi_cb} with the obvious changes.

Let us now prove the last part of the statement. We again use an approximation argument.
Fix $t>s\in I$, $R>0$. At a first step, we estimate the norm of operator $G(t,r)$ in $\mathcal L(L^p(\Rd,\mu_r),L^p(B_{R+1}))$
and in $\mathcal L(L^p(\Rd,\mu_r),W^{2,p}(B_{R+1}))$, for any $r\in[s,t)$. In the rest of the proof, we denote by $c$ a positive constant, possibly depending on $R$, but
being independent of $t$, $r$ and $f\in L^p(\Rd,\mu_r)$, which may vary from line to line.
Since
there exists a positive and continuous function $\rho:I\times\Rd\to\R$
such that $\mu_r=\rho(r,\cdot)dx$, the spaces $L^p(B_M)$ and $L^p(B_M,\mu_r)$ coincide and their norms are equivalent for any $M>0$. From this remark,
the interior $L^p$-estimates in Theorem \ref{prop-A3}, with $u=G(\cdot,s)f$ and the contractiveness of $G(t,r)$ from $L^p(\Rd,\mu_r)$ to $L^p(\Rd,\mu_t)$,  imply that
\begin{align}
\|G(t,r)f\|_{W^{2,p}(B_{R+1})}
& \leq c(t-r)^{-1}\|f\|_{L^p(\Rd,\mu_r)},\qquad\;\, s<r<t<T,
\label{volante}
\end{align}
first for any $f\in C_b(\Rd)$, and then, by density, for any $f\in L^p(\Rd,\mu_r)$.
Since, for $\theta\in (0,1)$, $(L^p(\Rd,\mu_r), L^p(\Rd,\mu_r))_{\theta,p}=L^p(\Rd,\mu_r)$ and $(W^{1,p}(B_{R+1}), W^{2,p}(B_{R+1}))_{\theta,p}$$=W^{1+\theta,p}(B_{R+1})$,
with equivalence of the corresponding norms, by an interpolation argument and \eqref{volante} we deduce
that $\|G(t,r)\|_{\mathcal L(L^p(\Rd,\mu_r),W^{1+\theta,p}(B_{R+1}))}
\leq c(t-r)^{-\frac{1+\theta}{2}}$ for any $s<r<t<T$.
Hence, if for any $n\in\N$ we consider the function $z_n$, which is the integral term in \eqref{defi_mild}, with $u$ being replaced by $u_{f_n}$,
 and use \eqref{dip-p-weak} and the fact that $\psi\in B([s,T]\times\Rd;{\rm Lip}(\R^{d+1}))$, then we get
\begin{align*}
&\|\nabla_x z_n(t,\cdot)-\nabla_x z_m(t,\cdot)\|_{W^{\theta,p}(B_{R+1})}\\
\leq  & c\int_s^t(t-r)^{-\frac{1+\theta}{2}}
\|u_{f_n}(r,\cdot)-u_{f_m}(r,\cdot)\|_{L^p(\Rd,\mu_r)}dr\\
&+c\int_s^t(t-r)^{-\frac{1+\theta}{2}}\|\nabla_xu_{f_n}(r,\cdot)-\nabla_xu_{f_m}(r,\cdot)\|_{L^p(\Rd,\mu_r)}dr \\
\leq & c(t-s)^{-\frac{\theta}{2}}\|f_n-f_m\|_{L^p(\Rd,\mu_s)}
\end{align*}
for any $n\in\N$. We have so proved that, for any $\theta\in (0,1)$ and almost every $t\in(s,T]$, the function  $u_f(t,\cdot)$ belongs to $W^{1+\theta}(B_{R+1})$
and
\begin{align*}
\|u_{f_n}(t,\cdot)-u_{f_m}(t,\cdot)\|_{W^{1+\theta,p}(B_{R+1})}\leq c(t-s)^{-\frac{1+\theta}{2}}\|f_n-f_m\|_{L^p(\Rd,\mu_s)},\qquad m,n\in\N.
\end{align*}

Similarly,
\begin{align*}
\|u_{f_n}(t,\cdot)\|_{W^{1+\theta,p}(B_{R+1})}\leq c(t-s)^{-\frac{1+\theta}{2}}(\|f\|_{L^p(\Rd,\mu_s)}+\|\psi(\cdot,\cdot,0,0)\|_{\infty}),\qquad\;\,n\in\N.
\end{align*}

Using these estimates, we can now show that $\psi_{u_f}(r,\cdot)\in W^{\theta,p}(B_{R+1})$, for any $\theta<\gamma$. For this purpose, we add and subtract $\psi(t,y,u_{f_n}(t,x),\nabla_xu_{f_n}(t,x))$,
use condition \eqref{tazza} and the Lipschitz continuity of $\psi$ with respect to the last two variables to infer that
\begin{align}
|\psi_{u_{f_n}}(t,x)-\psi_{u_{f_n}}(t,y)|
\le &c|u_{f_n}(t,x)-u_{f_n}(t,y)|+c|\nabla_xu_{f_n}(t,x)-\nabla_xu_{f_n}(t,y)|\notag\\
&+c|x-y|^{\gamma}(1+|u_{f_n}(t,x)|+|\nabla_xu_{f_n}(t,x)|)
\label{label-A}
\end{align}
and
\begin{align}
|\psi_{u_{f_n}}(t,x)-\psi_{u_{f_m}}(t,x)|\le &c(|u_{f_n}(t,x)-u_{f_m}(t,x)|+|\nabla_xu_{f_n}(t,x)-\nabla_xu_{f_m}(t,x)|)
\label{label-B}
\end{align}
for any $t\in (s,T)$, $x,y\in\Rd$ and $m,n\in\N$. Hence, using \eqref{label-A} we obtain
\begin{align*}
&|\psi_{u_{f_n}}(t,x)-\psi_{u_{f_n}}(t,y)-\psi_{u_{f_m}}(t,x)+\psi_{u_{f_m}}(t,y)|\notag\\
\le & c\big [|u_{f_n}(t,x)-u_{f_n}(t,y)|+|\nabla_xu_{f_n}(t,x)-\nabla_xu_{f_n}(t,y)|\notag\\
&\quad\! +|u_{f_m}(t,x)-u_{f_m}(t,y)|+|\nabla_xu_{f_m}(t,x)-\nabla_xu_{f_m}(t,y)|\notag\\
&\quad\! +|x\!-\!y|^{\gamma}(1\!+\!|u_{f_n}(t,x)|\!+\!|u_{f_m}(t,x)|\!+\!|\nabla_xu_{f_n}(t,x)|\!+\!|\nabla_xu_{f_m}(t,x)|)\big ]\!=:\!{\mathcal I}(t,x,y)
\end{align*}
and, using \eqref{label-B},
\begin{align*}
&|\psi_{u_{f_n}}(t,x)-\psi_{u_{f_n}}(t,y)-\psi_{u_{f_m}}(t,x)+\psi_{u_{f_m}}(t,y)|\\
\le &c\big [|u_{f_n}(t,x)-u_{f_m}(t,x)|+|\nabla_xu_{f_n}(t,x)-\nabla_xu_{f_m}(t,x)|\\
&\quad +|u_{f_n}(t,y)-u_{f_m}(t,y)|+|\nabla_xu_{f_n}(t,y)-\nabla_xu_{f_m}(t,y)|\big ]=:{\mathcal J}(t,x,y).
\end{align*}
From these two estimates we conclude that
\begin{align*}
|\psi_{u_{f_n}}(t,x)-\psi_{u_{f_n}}(t,y)-\psi_{u_{f_m}}(t,x)+\psi_{u_{f_m}}(t,y)|^p\le ({\mathcal I}(t,x,y))^{\beta p}({\mathcal J}(t,x,y))^{(1-\beta)p}
\end{align*}
for any $(t,x)\in (s,T)\times\Rd$, any $\beta\in (0,1)$ and any $m,n\in\N$.
Hence, for any $\theta<\gamma$ and $\beta$, such that $(0,1)\ni\theta'=\theta/\beta+d(1-\beta)/(p\beta)$, a long but straightforward computation reveals that
\begin{align*}
&[\psi_{u_{f_n}}(t,\cdot)-\psi_{u_{f_m}}(t,\cdot)]_{W^{\theta,p}(B_{R+1})}\\
\le &c\|u_{f_n}(t,\cdot)-u_{f_m}(t,\cdot)\|_{W^{1,p}(\Rd,\mu_t)}^{(1-\beta)}\notag\\
&\qquad\times\big (
\|u_{f_n}(t,\cdot)\|_{W^{1+\theta',p}(B_{R+1})}^{\beta}+
\|u_{f_m}(t,\cdot)\|_{W^{1+\theta',p}(B_{R+1})}^{\beta}
+1\big )
\end{align*}
and, consequently,
\begin{align*}
\|\psi_{u_{f_n}}(t,\cdot)-\psi_{u_{f_m}}(t,\cdot)\|_{W^{\theta,p}(B_{R+1})}\le c(t-s)^{\frac{\beta-\theta'}{2}-1}\|f_n-f_m\|_{L^p(\Rd,\mu_s)}^{1-\beta}
\end{align*}
for any $t\in (s,T)$.
We are almost done. Indeed, by interpolation from Proposition \ref{prop-A3} we deduce that
$\|G(t,r)\|_{\mathcal L(W^{\theta,p}(B_{R+1}), W^{2,p}(B_R))}\le c(t-r)^{-1+\theta/2}$.
From this and the previous estimate we conclude that
\begin{eqnarray*}
\|z_n(t,\cdot)-z_m(t,\cdot)\|_{W^{2,p}(B_R)}\le c(t-s)^{\frac{\beta+\theta-\theta'}{2}-1}\|f_n-f_m\|_{L^p(\Rd,\mu_s)}^{1-\beta},\qquad\;\,m,n\in\N,
\end{eqnarray*}
for any $t\in (s,T]$ and $\beta>\theta'$, so that $\|u_{f_n}(t,\cdot)-u_{f_m}(t,\cdot)\|_{W^{2,p}(B_R)}\le c(t-s)^{-1}\|f_n-f_m\|_{L^p(\Rd,\mu_s)}^{1-\beta}$
for any $m,n\in\N$, thanks to \eqref{volante}. From this estimate it is easy to conclude that $(u_{f_n})$ is a Cauchy sequence in $W^{0,2}_{p,\rm loc}((s,T)\times\Rd)$. Since $u_{f_n}$ is a classical solution to problem \eqref{n-sm-pb}, $(D_tu_{f_n})$ is a Cauchy sequence in $L^p_{\rm loc}((s,T)\times\Rd)$.
It thus follows that $u_f\in W^{1,2}_{p,\rm loc}((s,T)\times\Rd)$ and it solves the equation $D_tu_f=\A u_{f}+\psi_{u_f}$ in $(s,T)\times\Rd$.
\end{proof}

The arguments in the proof of Theorem \ref{prop-3.8} and Corollary \ref{coro-Lp} allow us to prove the following result.

\begin{prop}
Under Hypotheses $\ref{base}$, the following properties are satisfied.
\begin{enumerate}[\rm (i)]
\item
Let  $\psi\!\in\! C((s,T]\times\Rd\times\R\times\Rd)$ with $[\psi]_{\infty,T}+\sup_{(t,x)\in (s,T]\times\Rd}[\psi(t,x,\cdot,\cdot)]_{{\rm Lip}(\R^{d+1})}$ $<+\infty$.
Then, for any $f\in C_b(\Rd)$, the Cauchy problem \eqref{n-sm-pb} admits a unique mild solution $u_f\in C([s,T]\times\Rd)\cap C^{0,1}((s,T]\times\Rd)$ which
satisfies \eqref{dip-p-weak} and \eqref{dip-p-weak-0} for any $p\in [p_0,+\infty]$.
\item
Let $\psi\in C((s,T]\times\Rd\times\R\times\Rd)$ and $[\psi]_p+\sup_{(t,x)\in (s,T]\times\Rd}[\psi(t,x,\cdot,\cdot)]_{{\rm Lip}(\R^{d+1})}<+\infty$ for some $p\ge p_0$.
Then, for any $f\in L^p(\Rd,\mu_s)$, the Cauchy problem \eqref{n-sm-pb} admits a unique mild solution $u_f$ which belongs to $W^{0,1}_p((s,T)\times\Rd)$, if $p_0\le p<2$, and to $W^{0,1}_q((s,T)\times\Rd)$
for any $J\Subset (s,T]$, if $p\ge 2$. Further, $u_f$ satisfies \eqref{dip-p-weak-0} and \eqref{dip-p-weak}, with the supremum being replaced by the essential supremum.
\end{enumerate}
\end{prop}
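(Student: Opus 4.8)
The plan is to treat this proposition as the uniformly-Lipschitz counterpart of Theorem~\ref{prop-3.8} and Corollary~\ref{coro-Lp}: the local Lipschitz bound \eqref{loc_Lip}, with its $(t-s)^{-\beta}$ singularity, is replaced by a genuine global Lipschitz bound on $\psi(t,x,\cdot,\cdot)$, uniform in $(t,x)$, and in exchange one can drop both Hypothesis~\ref{glob2} and the H\"older condition \eqref{hyp-holder}. The key observation is that the a priori sup-bound on the solution, which in Theorem~\ref{prop-3.8} was produced by Hypothesis~\ref{glob2}(ii) through a maximum principle argument, now follows for free from the Lipschitz estimate together with the smoothing properties of $G(t,s)$; the trade-off is that one only obtains the mild solution delivered by the fixed-point scheme, with the regularity stated, and not a classical solution. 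I would prove (i) directly by a fixed point and then deduce (ii) from (i) by the approximation argument of Corollary~\ref{coro-Lp}.

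For (i), set $L:=\sup_{(t,x)}[\psi(t,x,\cdot,\cdot)]_{{\rm Lip}(\R^{d+1})}$ and note that $\|\psi(r,\cdot,0,0)\|_\infty\le (r-s)^{-1/2}[\psi]_{\infty,T}$ for $r\in(s,T)$. I would work on the Banach space $Z$ of functions $u\in C([s,T]\times\Rd)\cap C^{0,1}((s,T]\times\Rd)$ with $\opnorm{u}_\infty:=\sup_{t\in(s,T)}e^{-\omega(t-s)}(\|u(t,\cdot)\|_\infty+\sqrt{t-s}\,\|\nabla_xu(t,\cdot)\|_\infty)<+\infty$, and on it the map $\Gamma$ defined by the right-hand side of \eqref{defi_mild}. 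Using Proposition~\ref{smoth_v}, \eqref{norm_g} and \eqref{point_p_D} one checks that $\Gamma$ maps $Z$ into itself, that $\opnorm{\Gamma(0)}_\infty\le C_{T-s}(\|f\|_\infty+(\sqrt{T-s}+1)[\psi]_{\infty,T})$ by a computation of the type of \eqref{acqua-6a}, and that
\[
\opnorm{\Gamma(u)-\Gamma(v)}_\infty\le L\,\eta_{T-s}(\omega)\,\opnorm{u-v}_\infty ,\qquad u,v\in Z,
\]
with $\eta_{T-s}(\omega)\to0$ as $\omega\to+\infty$ (the $(r-s)^{-1/2}$-singularity of $\psi(\cdot,\cdot,0,0)$ is integrable and causes no new difficulty). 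Taking $\omega$ large makes $\Gamma$ a $1/2$-contraction on $Z$; its unique fixed point $u_f$ is the mild solution of \eqref{n-sm-pb} on $[s,T]$, it has the claimed regularity by Proposition~\ref{smoth_v}, and, undoing the exponential weight, $\opnorm{u_f}_\infty\le2\opnorm{\Gamma(0)}_\infty$ gives \eqref{dip-p-weak-0} for $p=+\infty$, while comparing the fixed points attached to $f$ and $g$ gives $\opnorm{u_f-u_g}_\infty\le2\opnorm{G(\cdot,s)(f-g)}_\infty$, i.e. \eqref{dip-p-weak} for $p=+\infty$. Uniqueness among bounded mild solutions with bounded weighted gradient would be obtained from the Gronwall argument of Step~2 of the proof of Theorem~\ref{exi_cb} (with $\beta=0$). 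Finally, for $p\in[p_0,+\infty)$, estimates \eqref{dip-p-weak-0} and \eqref{dip-p-weak} follow by repeating Step~2 of the proof of Theorem~\ref{prop-3.8} verbatim, which uses only the uniform Lipschitz continuity of $\psi$ in its last two arguments, the contractivity of $G(t,r):L^p(\Rd,\mu_r)\to L^p(\Rd,\mu_t)$, the estimate \eqref{point_p_D}, the invariance of $\{\mu_t\}$, and a large exponential weight.

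For (ii), I would fix $f\in L^p(\Rd,\mu_s)$ and $(f_n)\subset C_b(\Rd)$ with $f_n\to f$ in $L^p(\Rd,\mu_s)$, and let $u_{f_n}$ be the solutions from (i). By \eqref{dip-p-weak}, $(u_{f_n}(t,\cdot))_n$ is Cauchy in $W^{1,p}(\Rd,\mu_t)$ for each $t\in(s,T]$; integrating in $t$, and using $L^q(\Rd,\mu_t)\hookrightarrow L^p(\Rd,\mu_t)$ when $p\ge2$, one sees that $(u_{f_n})_n$ is Cauchy in $L^p((s,T)\times\Rd,\mu)\cap W^{0,1}_q(J\times\Rd,\mu)$ for every $J\Subset(s,T]$, with $q=p$ if $p_0\le p<2$ and any $q<2$ otherwise. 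Passing to the limit in \eqref{defi_mild} written for $f_n$ shows that the limit $u_f$ is a mild solution in the stated Sobolev class satisfying \eqref{dip-p-weak-0} and \eqref{dip-p-weak} with the essential supremum, and uniqueness follows from a Gronwall estimate exactly as in the proof of Corollary~\ref{coro-Lp} (the part invoking \eqref{tazza} is not needed here).

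The step I expect to require the most care is the global, $[s,T]$-uniform control in (i): in Theorem~\ref{prop-3.8} this rested on the structural Hypothesis~\ref{glob2}(ii), and the point of the present argument is that the uniform Lipschitz bound lets one replace it by the single weighted contraction above, choosing $\omega$ so that $L\,\eta_{T-s}(\omega)<1/2$. Once this is set up, the remaining assertions are transcriptions of the proofs of Theorems~\ref{exi_cb} and~\ref{prop-3.8} and of Corollary~\ref{coro-Lp}.
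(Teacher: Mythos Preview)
Your argument for (i) is essentially the paper's: a Banach fixed point in the weighted space $(Z,\opnorm{\cdot}_\infty)$, with $\omega$ chosen large so that $\Gamma$ is a $1/2$-contraction, followed by the $L^p$-estimates via Step~2 of Theorem~\ref{prop-3.8}. That part is fine.

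There is, however, a genuine gap in your treatment of (ii). You write ``let $u_{f_n}$ be the solutions from (i)'', but part (i) requires $[\psi]_{\infty,T}<+\infty$, whereas in (ii) only $[\psi]_{p,T}<+\infty$ is assumed. Nothing prevents $\psi(t,\cdot,0,0)$ from being unbounded in $x$ while lying in $L^p(\Rd,\mu_t)$, so the fixed-point argument in the $C_b$-weighted space $Z$ is unavailable as stated: already $\Gamma(0)$ need not belong to $Z$. Consequently the solutions $u_{f_n}$ you want to approximate with are not provided by (i), and the Cauchy-sequence argument has no starting point.

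The paper closes this gap by also truncating the nonlinearity: one introduces a standard sequence of cut-off functions $(\vartheta_n)$ and sets $\psi_n=\vartheta_n\psi$. Each $\psi_n(t,\cdot,0,0)$ has compact support in $x$, so $[\psi_n]_{\infty,T}<+\infty$ and part (i) applies to the problem with data $(f_n,\psi_n)$, producing mild solutions $u_n$. Since $[\psi_n]_{p,T}\le[\psi]_{p,T}$ and the Lipschitz seminorm is preserved, the estimates \eqref{dip-p-weak-0}--\eqref{dip-p-weak} hold for $u_n$ with constants independent of $n$, and one can then run the approximation argument of Corollary~\ref{coro-Lp} to pass to the limit. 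Your scheme is easily repaired by inserting this extra truncation of $\psi$.
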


\begin{proof}
To prove property (i) it suffices to apply the Banach fixed point theorem in the space of all the functions $v\in C_b([s,T]\times\Rd)\cap C^{0,1}((s,T]\times\Rd)$
such that $\opnorm{v}_{\infty}<+\infty$, where $\opnorm{\cdot}_{\infty}$ is defined in Step 2 of the proof of Theorem \ref{prop-3.8}, with $p=+\infty$. The uniqueness of the so obtained solution follows from the condition $\sup_{(t,x)\in (s,T]\times\Rd}[\psi(t,x,\cdot,\cdot)]_{{\rm Lip}(\R^{d+1})}<+\infty$, in a standard way.

To prove property (ii), one can argue by approximation. We fix $f\in L^p(\Rd,\mu_s)$, approximate it by a sequence $(f_n)\subset C_b(\Rd)$, converging to $f$ in
 $L^p(\Rd,\mu_s)$, and introducing a standard sequence $(\vartheta_n)$ of cut-off functions. If we set $\psi_n=\vartheta_n\psi$ for
any $n\in\N$, then, each function $\psi_n$ satisfies the condition (b) in property (i) and $[\psi_n]_{p,T}\le [\psi]_{p,T}$. Therefore, the Cauchy problem
\eqref{n-sm-pb}, with $f_n$ and $\psi_n$ replacing $f$ and $\psi$ admits a unique mild solution
$u\in C_b([s,T]\times\Rd)\cap C^{0,1}((s,T]\times\Rd)$, which satisfies \eqref{dip-p-weak-0} and \eqref{dip-p-weak} with $f_n$ replacing $f$.
The arguments in the first part of the proof of Corollary \ref{coro-Lp}  allow us to prove the
existence of a mild solution $u_f$ to the Cauchy problem \eqref{n-sm-pb} with the properties in the statement of the proposition.
The uniqueness of the solution
follows also in this case from the condition $\sup_{(t,x)\in (s,T]\times\Rd}[\psi(t,x,\cdot,\cdot)]_{{\rm Lip}(\R^{d+1})}<+\infty$.
\end{proof}

\section{The evolution operator and its summability improving properties}
\label{4}

If, besides Hypotheses \ref{base}, the assumptions on $\psi$ in Theorem \ref{prop-3.8} hold true for any $I\ni s<T$ or if $\psi\in C(I\times\Rd\times\R\times\Rd)\cap B_{\rm loc}(I\times\Rd;{\rm Lip}(\R^{d+1}))$ and $\psi(\cdot,\cdot,0,0)\in C_b(\R^{d+1})$, then
for any $f\in C_b(\Rd)$ and $s\in I$ the mild solution to problem \eqref{n-sm-pb} exists
in the whole of $[s,+\infty)$. Hence, we can set ${\mathcal N}(t,s)f=u_f(t,\cdot)$
for any $t>s$. Each operator ${\mathcal N}(t,s)$ maps $C_b(\Rd)$ into $C^1_b(\Rd)$. Moreover,
the uniqueness of the solution to problem \eqref{n-sm-pb} yields the {\it evolution law}
${\mathcal N}(t,s)f={\mathcal N}(t,r){\mathcal N}(r,s)f$ for any $r\in (s,t)$ and $f\in C_b(\Rd)$. Hence $\{{\mathcal N}(t,s): I\ni s<t\}$ is
a nonlinear evolution operator in $C_b(\Rd)$. It
can be extended to the $L^p$-setting, for any $p\ge p_0$, using the same arguments as in the
first part of the proof of Corollary \ref{coro-Lp}. Clearly, if $\psi(t,x,\cdot,\cdot)$ is Lipschitz continuous in
$\R^{d+1}$, uniformly with respect to $(t,x)\times J\times\Rd$, for any $J\Subset I$, then by density, we still deduce that
${\mathcal N}(t,s)$ satisfies the evolution law and, moreover, each operator
${\mathcal N}(t,s)$ is bounded from $L^p(\Rd,\mu_s)$ to $W^{1,p}(\Rd,\mu_t)$ and
\begin{align}
&\|{\mathcal N}(t,s)f\|_{L^p(\Rd,\mu_t)}
+\sqrt{t-s}\,\|\nabla_x {\mathcal N}(t,s)f\|_{L^p(\Rd, \mu_t)})\notag\\
\le &C_{T-s}(\|f\|_{L^p(\Rd, \mu_s)}+(T-s+1)\|\psi(\cdot,\cdot,0,0)\|_{\infty}).
\label{ciclismo}
\end{align}

\subsection{Continuity properties of the nonlinear evolution operator}

In the following theorem, assuming the above conditions on $\psi$, we prove an interesting continuity property of the operator
${\mathcal N}(t,s)$.

\begin{thm}
\label{compleanno}
Let $(f_n)\subset C_b(\Rd)$ be a bounded sequence converging to some function $f\in C_b(\Rd)$
pointwise in $\Rd$. Then, for any $s\in I$, ${\mathcal N}(\cdot,s)f_n$ and $\nabla_x{\mathcal N}(\cdot,s)f_n$
converge to  ${\mathcal N}(\cdot,s)f$ and $\nabla_x{\mathcal N}(\cdot,s)f$, respectively,
locally uniformly in $(s,+\infty)\times\Rd$.
\end{thm}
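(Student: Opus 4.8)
The plan is to combine a compactness/equicontinuity argument with the uniqueness of the mild solution, exactly in the spirit of a standard "stability of solutions under pointwise convergence of data" result. First I would fix $s\in I$ and a bounded time interval $(s,T]$, and write $M:=\sup_n\|f_n\|_\infty<+\infty$. From the representation \eqref{defi_mild} for $u_{f_n}={\mathcal N}(\cdot,s)f_n$ together with estimate \eqref{st-gr} (applied iteratively, or the global version giving $\|u_{f_n}\|_\infty+\sup_{t\in(s,T)}\sqrt{t-s}\|\nabla_xu_{f_n}(t,\cdot)\|_\infty\le C_{T-s}(M+\cdots)$) I get a uniform bound on the sequence $(u_{f_n})$ in $C_b([s,T]\times\Rd)$ and on $(\sqrt{t-s}\,\nabla_xu_{f_n})$. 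The Schauder-type interior estimate in Theorem \ref{exi_cb} (the bound $\sup_{t\in(s,s+\delta]}(t-s)^{(1+\theta)/2}\|u_f(t,\cdot)\|_{C^{1+\theta}(B_R)}\le C_{R,T-s}\|f\|_\infty$), upgraded by Corollary \ref{coro-classica} on compact subsets of $(s,T]\times\Rd$, then yields uniform $C^{1+\theta/2,2+\theta}_{\rm loc}$-bounds for $(u_{f_n})$ on sets $J\times B_R$ with $J\Subset(s,T]$. Hence $(u_{f_n})$ is relatively compact in $C^{1,2}_{\rm loc}((s,T]\times\Rd)$; in particular $(u_{f_n},\nabla_xu_{f_n})$ is relatively compact in $C_{\rm loc}((s,+\infty)\times\Rd)$.

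Next I would pass to the limit along an arbitrary subsequence. Let $u_{f_{n_k}}\to v$ together with all derivatives up to second order, locally uniformly in $(s,+\infty)\times\Rd$, where $v\in C^{1,2}((s,+\infty)\times\Rd)$. Then $v$ solves $D_tv=\A v+\psi(\cdot,\cdot,v,\nabla_xv)$ classically in $(s,+\infty)\times\Rd$, because $\psi$ is continuous and the convergence of $(u_{f_{n_k}},\nabla_xu_{f_{n_k}})$ is locally uniform. To identify the initial datum and recover the mild formulation I would pass to the limit in the integral equation \eqref{defi_mild}: the term $G(t,s)f_{n_k}\to G(t,s)f$ pointwise by dominated convergence (the measures $p(t,s,x,dy)$ are probabilities and $f_{n_k}\to f$ pointwise with a uniform bound), and $\int_s^t G(t,r)\psi_{u_{f_{n_k}}}(r,\cdot)\,dr\to\int_s^t G(t,r)\psi_v(r,\cdot)\,dr$ because $\psi_{u_{f_{n_k}}}(r,\cdot)\to\psi_v(r,\cdot)$ pointwise with the integrable (in $r$) domination $\|\psi_{u_{f_{n_k}}}(r,\cdot)\|_\infty\le c(r-s)^{-\beta}$ coming from Hypothesis \ref{hyp-1}(i) and the uniform $Y_\delta$-bounds — again by dominated convergence inside $G(t,r)$ and then in $r$. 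Thus $v$ is a mild solution of \eqref{n-sm-pb} with datum $f$, lying in $C_b([s,T]\times\Rd)\cap C^{0,1}((s,T]\times\Rd)$ with the appropriate weighted gradient bound, so by the uniqueness part of Theorem \ref{exi_cb} (extended in the large as in Theorem \ref{prop-3.8}) we get $v={\mathcal N}(\cdot,s)f$.

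Finally I would invoke the standard subsequence principle: every subsequence of $(u_{f_n})$ has a further subsequence converging in $C^{1,2}_{\rm loc}((s,+\infty)\times\Rd)$ to the \emph{same} limit ${\mathcal N}(\cdot,s)f$, hence the whole sequence $(u_{f_n},\nabla_xu_{f_n})$ converges locally uniformly in $(s,+\infty)\times\Rd$ to $({\mathcal N}(\cdot,s)f,\nabla_x{\mathcal N}(\cdot,s)f)$, which is the assertion. I expect the main obstacle to be the uniform-in-$n$ a priori estimates near the initial time $t=s$: the $C^{1+\theta}$-bound on $u_{f_n}(t,\cdot)$ degenerates like $(t-s)^{-(1+\theta)/2}$, and the forcing term $\psi_{u_{f_n}}$ only satisfies $\|\psi_{u_{f_n}}(t,\cdot)\|_\infty\lesssim(t-s)^{-\beta}$, so one has to be careful that all the constants in these estimates depend only on $M$, $T-s$ and the structural constants, not on $n$ — this is exactly what the statements of Theorem \ref{exi_cb} and Corollary \ref{coro-classica} provide, but it must be checked that the relevant local solution is in fact defined on all of $(s,T]$ (using the global existence from Theorem \ref{prop-3.8} or the global hypotheses assumed at the beginning of Section \ref{4}). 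Away from $t=s$ everything is routine parabolic compactness, and the uniqueness argument is the one already used repeatedly in the paper.
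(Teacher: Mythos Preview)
Your argument is correct, but the paper proceeds quite differently. Rather than compactness plus uniqueness, the paper gives a direct quantitative estimate: for fixed $t>s$ and $p\in(1,2)$ it sets
\[
h_n(r,\cdot)=G(t,r)\bigl(|u_{f_n}(r,\cdot)-u_f(r,\cdot)|^p+|\nabla_x(u_{f_n}(r,\cdot)-u_f(r,\cdot))|^p\bigr),\qquad r\in(s,t],
\]
and, using the mild formulation together with the pointwise gradient estimate \eqref{point_p_D} and the Lipschitz bound on $\psi$, derives the singular integral inequality
\[
h_n(t,x)\le C_{p,T}(t-s)^{-p/2}(G(t,s)|f_n-f|^p)(x)+C_{p,T}\int_s^t(t-r)^{-p/2}h_n(r,x)\,dr.
\]
Henry's generalized Gronwall lemma then bounds $h_n(t,x)$ by an integral of $(G(r,s)|f_n-f|^p)(x)$, and the known fact that $G(\cdot,s)|f_n-f|^p\to0$ locally uniformly (\cite[Proposition~3.1(i)]{KunLorLun09Non}) finishes the proof by dominated convergence.

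The trade-offs: the paper's route is shorter, yields an explicit pointwise estimate, and uses only the mild formulation and the pointwise gradient bound~\eqref{point_p_D}; in particular it does not need the solutions to be classical, so it covers uniformly the second set of standing assumptions at the beginning of Section~\ref{4} (where only $\psi\in B_{\rm loc}(I\times\Rd;{\rm Lip}(\R^{d+1}))$ is assumed and condition~\eqref{hyp-holder} may fail). Your compactness argument is the standard and robust one, but as written it leans on the $C^{1+\theta/2,2+\theta}_{\rm loc}$ bounds coming through Corollary~\ref{coro-classica}, hence on~\eqref{hyp-holder}; to make it work under the weaker alternative you would have to replace that step by the $C^{1+\theta}(B_R)$ bound of Theorem~\ref{exi_cb} together with a time-interpolation (equicontinuity in $t$ of $\nabla_xu_{f_n}$ via $\|\cdot\|_{C^1}\lesssim\|\cdot\|_{C}^{\theta/(1+\theta)}\|\cdot\|_{C^{1+\theta}}^{1/(1+\theta)}$). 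Your identification of the limit through the mild equation and the subsequence principle is fine.
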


\begin{proof}
Let $(f_n)$ and $f$ be as in the statement. To ease the notation, we write
$u_{f_n}$ and $u_f$ for ${\mathcal N}(\cdot,s)f_n$ and ${\mathcal N}(\cdot,s)f$,
respectively. Moreover, for any $n\in\N$, $t>s$ and $r\in (s,t]$ we set
$h_n(r,\cdot)=G(t,r)(|u_{f_n}(r,\cdot)-u_f(r,\cdot)|^p
+|\nabla_x(u_{f_n}(r,\cdot)-u_f(r,\cdot))|^p)$, and we denote by $L_{R,T}$ any constant such that
\begin{equation}
|\psi(t,x,u_2,v_2)-\psi(t,x,u_1,v_1)|\le L_{R,T}(|u_2-u_1|+|v_2-v_1|)
\label{mismo}
\end{equation}
for any $t\in [s,s+T]$, $x,v_1,v_2\in\Rd$, $u_1,u_2\in [-R,R]$ and $T>0$.
As a first step, formula \eqref{st-gr} shows that, for any $T>0$,
there exists a positive constant $M_T$ such
that $\|u_f\|_{\infty}+\|u_{f_n}\|_{\infty}\le M_T$.
Fix $p\in (1,2)$. Using formula \eqref{defi_mild}, we can estimate
\begin{align*}
|D_x^ju_{f_n}(t,x)-D_x^ju_f(t,x)|^p\le &2^{p-1}|(D_x^jG(t,s)(f_n-f))(x)|^p\\
&+2^{p-1}\bigg |\int_s^t(D_x^jG(t,r)(\psi_{u_{f_n}}(r,\cdot)
-\psi_{u_f}(r,\cdot))(x)dr\bigg |^p
\end{align*}
for any $(t,x)\in (s,+\infty)\times\Rd$ and $j=0,1$.
By the representation formula \eqref{repres-formula}, H\"older inequality,
estimates \eqref{point_p_D} and \eqref{mismo}, we deduce that
\begin{align*}
|u_{f_n}(t,\cdot)-u_f(t,\cdot)|^p\le 2^{p-1}G(t,s)|f_n-f|^p+(4T)^{p-1}L_{M_T}^p\int_s^th(r,\cdot)dr
\end{align*}
and
\begin{align*}
|\nabla_xu_{f_n}(t,\cdot)-\nabla_xu_f(t,\cdot)|^p
\le &2^{p-1}(t-s)^{-\frac{p}{2}}c_TG(t,s)|f_n-f|^p\notag\\
&+(4T)^{p-1}c_TL_{M_T,T}^p\int_s^t(t-r)^{-\frac{p}{2}}h(r,\cdot)dr
\end{align*}
in $\Rd$, for any $t\in (s,s+T)$ and some positive constant $c_T$.
Hence, the function $h_n(\cdot,x)$ satisfies the differential inequality
\begin{eqnarray*}
h_n(t,x)\le C_{p,T}(t-s)^{-\frac{p}{2}}(G(t,s)|f_n-f|^p)(x)+C_{p,T}\int_s^t(t-r)^{-\frac{p}{2}}h_n(r,x)dr
\end{eqnarray*}
for any $t\in (s,s+T)$ and $x\in\Rd$. Since $h_n(\cdot,x)$ is continuous in $(s,t]$ and $h_n(r,x)\le
\widetilde C_T(r-s)^{-p/2}$ for some positive constant $\widetilde C_T$, independent of $n$, and any
$r\in (s,t)$, we can apply \cite[Lemma 7.1]{henry} and conclude that
\begin{align*}
h_n(t,x)\le &C_{p,T}(t-s)^{-\frac{p}{2}}(G(t,s)|f_n-f|^p)(x)\\
&+C_{p,T}\int_s^t(t-r)^{-\frac{p}{2}}(r-s)^{-\frac{p}{2}}(G(r,s)|f_n-f|^p)(x)dr
\end{align*}
for any $t\in (s,s+T)$. Hence,
\begin{align*}
\|h_n(t,\cdot)\|_{C_b(B_R)}\le &C_{p,T}(t-s)^{-\frac{p}{2}}\|G(t,s)|f_n-f|^p\|_{C_b(B_R)}\\
&+C_{p,T}\int_s^t(t-r)^{-\frac{p}{2}}(r-s)^{-p/2}\|G(r,s)|f_n-f|^p\|_{C_b(B_R)}dr
\end{align*}
for any $R>0$. By \cite[Proposition 3.1(i)]{KunLorLun09Non}, $\|G(r,s)|f_n-f|^p\|_{C_b(B_R)}$ vanishes
as $n\to +\infty$ for any $r>s$. Hence, by dominated convergence,
$\|h_n(t,\cdot)\|_{C_b(B_R)}$ vanishes as $n\to +\infty$ for any $t\in (s,s+T)$, which means
that, for any $t\in (s,s+T)$, $u_{f_n}(t,\cdot)$ and $\nabla_xu_{f_n}(t,\cdot)$ converge
uniformly in $B_R$ to $u_f(t,\cdot)$ and $\nabla_xu_{f_n}(t,\cdot)$, respectively. The
arbitrariness of $R$ and $T$ yields the assertion.
\end{proof}

\subsection{Hypercontractivity}
Throughout this and the forthcoming subsections we set
\begin{equation*}
{\mathcal F}(\zeta)=|\sqrt{Q}\nabla_x\zeta|^2,\qquad\;\,{\mathcal G}(\zeta)=\sum_{i=1}^d|\sqrt{Q}\nabla_xD_i\zeta|^2
\end{equation*}
for any smooth enough function $\zeta$. To begin with, we recall the following crucial result.

\begin{lemm}[Lemma 3.1 of \cite{AngLorLun}]
\label{derivative}
Assume that Hypotheses $\ref{base}$ hold true and fix $[a,b]\subset I$. If $f\in C^{1,2}_b([a,b]\times\Rd)$ and  $f(r,\cdot)$ is constant outside a compact set $K$ for every $r\in [a,b]$, then the function $r \mapsto \int_{\Rd}f(r,\cdot)d\mu_r$ is continuously differentiable in $[a,b]$ and
\begin{eqnarray*}
D_r\int_{\Rd}f(r,\cdot)d\mu_r=\int_{\Rd}D_rf(r,\cdot)d\mu_r -\int_{\Rd}\mathcal{A}(r)f(r, \cdot)d\mu_r,\qquad\;\,r\in [a,b].
\end{eqnarray*}
\end{lemm}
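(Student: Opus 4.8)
The plan is to reduce the formula to the analogous statement for the transition evolution operator $G(t,s)$ and then differentiate under the integral sign. Specifically, I would first approximate: since $f\in C^{1,2}_b([a,b]\times\Rd)$ is constant (say equal to the constant $c_r$ possibly depending on $r$, though for fixed $r$ constant outside $K$) outside a fixed compact set $K$ for every $r$, the quantities $D_rf$, $\nabla_x f$, $D_{ij}f$ all vanish outside $K$, so $\mathcal{A}(r)f(r,\cdot)$ is a compactly supported continuous function and all integrals in the statement are finite. The strategy is to write the difference quotient
\begin{align*}
\frac{1}{h}\left(\int_{\Rd}f(r+h,\cdot)\,d\mu_{r+h}-\int_{\Rd}f(r,\cdot)\,d\mu_r\right)
\end{align*}
and split it as
\begin{align*}
\frac{1}{h}\int_{\Rd}\big(f(r+h,\cdot)-f(r,\cdot)\big)\,d\mu_{r+h}+\frac{1}{h}\left(\int_{\Rd}f(r,\cdot)\,d\mu_{r+h}-\int_{\Rd}f(r,\cdot)\,d\mu_r\right).
\end{align*}

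For the first term, uniform continuity of $D_rf$ on the compact set $[a,b]\times K$ together with the fact that the $\mu_t$ are probability measures gives convergence to $\int_{\Rd}D_rf(r,\cdot)\,d\mu_r$ as $h\to 0$; one should be a little careful to pass from $d\mu_{r+h}$ to $d\mu_r$ here, which again follows since $f(r+h,\cdot)-f(r,\cdot)$ is supported in $K$ and one can use the invariance/continuity properties of the system of measures, or simply estimate the error by $\|D_rf\|_\infty$ times the difference of the (total, hence equal) masses plus an oscillation term. The genuinely substantive term is the second one. Here I would invoke the invariance property \eqref{inv_pro}: writing $g=f(r,\cdot)\in C_b(\Rd)$ (constant outside $K$), for $\sigma<r$ we have $\int_{\Rd}G(r,\sigma)g\,d\mu_r=\int_{\Rd}g\,d\mu_\sigma$, and similarly at time $r+h$. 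The plan is to express $\int_{\Rd}g\,d\mu_{r+h}-\int_{\Rd}g\,d\mu_r$ in terms of the action of $G$ and then use that $\frac{d}{dt}G(t,\sigma)g=\mathcal{A}(t)G(t,\sigma)g$ together with known regularity of $G(\cdot,\sigma)g$; alternatively, and more cleanly, one differentiates the identity $t\mapsto \int_{\Rd}G(t,r)g\,d\mu_t=\int_{\Rd}g\,d\mu_r$ (constant in $t$!) at $t=r$, obtaining $0=\frac{d}{dt}\big|_{t=r}\int_{\Rd}G(t,r)g\,d\mu_t$, and expands this derivative by the product/difference-quotient rule into a $\mu$-derivative part and an $\mathcal{A}(r)g$ part, which rearranges to exactly $D_r\int_{\Rd}g\,d\mu_r=-\int_{\Rd}\mathcal{A}(r)g\,d\mu_r$ when evaluated with the instantaneous datum.

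The main obstacle I anticipate is the justification of interchanging the time-derivative with the spatial integral against the family $\{\mu_t\}$, i.e.\ proving that $t\mapsto \int_{\Rd}G(t,r)g\,d\mu_t$ is differentiable with the expected derivative, given only that $\{\mu_t\}$ is an evolution system of probability measures without a priori smoothness in $t$. The way around this is precisely the structure of the statement: because $g$ (and more generally $f(r,\cdot)$) is constant outside a compact set, $\mathcal{A}(r)g$ has compact support, so $G(t,r)g$ is a classical solution of the parabolic equation with bounded, locally smooth data, and one can use interior Schauder estimates to get equicontinuity of $t\mapsto G(t,r)g$ and its relevant derivatives on compact sets; combined with the uniform boundedness of $G$ and the tightness of $\{\mu_t\}$ (so that mass escaping to infinity is controlled), dominated-type arguments close the limit. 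I would then assemble the two terms, conclude differentiability of $r\mapsto\int_{\Rd}f(r,\cdot)\,d\mu_r$ at every $r\in(a,b)$ with the stated derivative, and finally note that continuity of the right-hand side in $r$ — which follows from continuity of $D_rf$ and of $\mathcal{A}(r)f(r,\cdot)$ on $[a,b]\times K$ together with weak$^*$-type continuity of $r\mapsto\mu_r$ against compactly supported continuous functions — upgrades differentiability to continuous differentiability on the closed interval $[a,b]$, handling the endpoints via one-sided derivatives.
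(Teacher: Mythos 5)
The paper itself gives no proof of this lemma: it is imported verbatim from \cite{AngLorLun} (Lemma 3.1), so your proposal has to be measured against that argument. Your general direction --- split the difference quotient and treat the term $h^{-1}\big(\int_{\Rd}f(r,\cdot)\,d\mu_{r+h}-\int_{\Rd}f(r,\cdot)\,d\mu_r\big)$ through the invariance property \eqref{inv_pro} and the equation solved by $G(\cdot,r)$ --- is the right one, but the decisive limit interchange is not closed by the tools you invoke. Writing $g=f(r,\cdot)$, \eqref{inv_pro} turns that term into $-\int_{\Rd}h^{-1}(G(r+h,r)g-g)\,d\mu_{r+h}$, and to pass to the limit you need, besides locally uniform convergence of the quotients to $\mathcal{A}(r)g$, a bound on the quotients which is uniform in $h$ outside large balls: tightness of $\{\mu_t\}$ is useless otherwise, since the trivial bound is $2\|g\|_\infty/h$. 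Interior Schauder estimates and $\|G(t,s)\zeta\|_\infty\le\|\zeta\|_\infty$ do not provide such a bound: the natural majorant is $\sup_{\sigma\in(r,r+h)}\|\mathcal{A}(\sigma)G(\sigma,r)g\|_\infty$, and since the coefficients of $\mathcal{A}(\sigma)$ are unbounded while $G(\sigma,r)g$ ceases to be constant outside a compact set for $\sigma>r$, this quantity is not controlled by sup-norm bounds or by interior estimates. Your ``cleaner'' variant --- differentiating the constant map $t\mapsto\int_{\Rd}G(t,r)g\,d\mu_t$ and expanding the derivative into a ``$\mu$-derivative part'' plus an $\mathcal{A}(r)g$ part --- is circular, because the existence of that $\mu$-derivative part is exactly what the lemma asserts. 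Two smaller points: $D_rf$ does not vanish outside $K$ (the constant value may depend on $r$), and ``difference of the total masses plus an oscillation term'' does not yield $\int_{\Rd}D_rf(r,\cdot)\,d\mu_{r+h}\to\int_{\Rd}D_rf(r,\cdot)\,d\mu_r$; that step needs the weak continuity of $t\mapsto\mu_t$, which again follows from \eqref{inv_pro}, the continuity of $(t,s,x)\mapsto(G(t,s)\zeta)(x)$ and tightness.

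The proof in \cite{AngLorLun} bypasses any differentiation of $t\mapsto\mu_t$. With $g_\sigma:=D_\sigma f(\sigma,\cdot)-\mathcal{A}(\sigma)f(\sigma,\cdot)$, which is bounded and continuous precisely because $f(\sigma,\cdot)$ is constant outside $K$ (so that $\mathcal{A}(\sigma)f(\sigma,\cdot)$ is compactly supported), one checks that $v=f-G(\cdot,s)f(s,\cdot)$ solves $D_tv=\mathcal{A}(t)v+g_t$ with $v(s,\cdot)=0$, hence $f(t,\cdot)-G(t,s)f(s,\cdot)=\int_s^tG(t,\sigma)g_\sigma\,d\sigma$; integrating against $\mu_t$ and using \eqref{inv_pro} twice gives $\int_{\Rd}f(t,\cdot)\,d\mu_t-\int_{\Rd}f(s,\cdot)\,d\mu_s=\int_s^t\big(\int_{\Rd}g_\sigma\,d\mu_\sigma\big)\,d\sigma$, and the statement follows from the fundamental theorem of calculus once the inner integral is shown to be continuous in $\sigma$ (again via the weak continuity of $\mu_\sigma$). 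If you prefer to keep your difference-quotient scheme, the missing ingredient is the backward identity $D_\sigma G(t,\sigma)\zeta=-G(t,\sigma)\mathcal{A}(\sigma)\zeta$ for $\zeta\in C^2_b(\Rd)$ constant outside a compact set: it yields $h^{-1}(G(r+h,r)g-g)=h^{-1}\int_r^{r+h}G(r+h,\sigma)\mathcal{A}(\sigma)g\,d\sigma$, which is bounded by $\sup_\sigma\|\mathcal{A}(\sigma)g\|_\infty$ uniformly in $h$ and converges locally uniformly to $\mathcal{A}(r)g$; with that in hand, tightness does close the limit.
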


Further, we introduce the following additional assumptions.
\begin{hyp}
\label{cond-iper}
\begin{enumerate}[\rm(i)]
\item
$\psi\in B(I\times\Rd;{\rm Lip}(\R^{d+1}))\cap C(I\times\Rd\times\R\times\Rd)$, condition \eqref{hyp-holder} is satisfied in $[s,T]$, for any $T>s\in I$ and some
constant which may depend also on $s$ and $T$,
and there exist two constants $\xi_0\ge 0$ and $\xi_1$ such that
$u\psi(t,x,u,v)\le \xi_0|u|+\xi_1u^2+\xi_2|u||v|$ for any $t\ge s$, $x,v\in\Rd$ and $u\in\R$;
\item
there exists a non-negative function $\widetilde\varphi:\Rd\to\R$, blowing up at infinity such that
$\mathcal A\widetilde\varphi+k_1|\nabla\widetilde\varphi|\le a\widetilde\varphi$ in $\Rd$ for some locally bounded functions $a,k_1$;
\item
there exist locally bounded functions $C_0, C_1, C_2:I\to\R^+$ such that
\begin{align*}
&|Q(t,x)x|\leq C_0(t)|x|^3\varphi(x),\qquad\;\,{\rm Tr}(Q(t,x))\leq C_1(t)(1+|x|^2)\varphi(x),\notag\\
&\langle b(t,x),x\rangle\leq C_2(t)|x|^2\varphi(x)
\end{align*}
for any $t\in I$ and any $x\in\Rd$, where $\varphi$ is the Lyapunov function introduced in Hypothesis
$\ref{base}(iii)$;
\item
there exists a positive constant $K$ such that
\begin{align}
\int_{\Rd}|f|^q\log(|f|)d\mu_t\leq &\|f\|_{L^q(\Rd,\mu_t)}^q\log(\|f\|_{L^q(\Rd,\mu_t)})\notag\\
&+Kq\int_{\Rd}|f|^{q-2}|\nabla f|^2\one_{\{f\neq0\}}d\mu_t,
\label{LSI}
\end{align}
for any $t>s$, $f\in C^1_b(\Rd)$ and $q\in(1,+\infty)$.
\end{enumerate}
\end{hyp}

\begin{rmk}
\label{marsella}
{\rm
\begin{enumerate}[\rm (i)]
\item
Hypothesis \ref{cond-iper}(i) implies that $\psi(\cdot,\cdot,0,0)$ is bounded
in $[s,+\infty)\times\Rd$ and $\|\psi(\cdot,\cdot,0,0)\|_{C_b([s,+\infty)\times\Rd)}\le
\xi_0$.
\item
Sufficient conditions for \eqref{LSI} to hold are given in \cite{AngLorLun}. In particular, \eqref{LSI} holds true when \eqref{point_p_D11} is satisfied with $p=1$ (see Remark \ref{rmk-2.4}).
\end{enumerate}
}
\end{rmk}

We can now prove the main result of this subsection.

\begin{thm}
\label{vacanza}
Let Hypotheses $\ref{base}$ and $\ref{cond-iper}$ be satisfied. Then,
for any $f\in L^p(\Rd,\mu_s)$ $(p\ge p_0)$ and $t>s$ the function ${\mathcal N}(t,s)f$ belongs to $W^{1,p_{\gamma}(t)}(\Rd,\mu_t)$ and satisfies the estimates
\begin{align}
&\|{\mathcal N}(t,s)f\|_{L^{p_{\gamma}(t)}(\Rd,\mu_t)}\leq e^{\omega_{p,\gamma}(t-s)}
[\|f\|_{L^p(\Rd,\mu_s)}+\xi_0(t-s)],
\label{botti}
\\
&\|\nabla_x{\mathcal N}(t,s)f\|_{L^{p_{\gamma}(t)}(\Rd,\mu_t)}\leq c_0(t\!-\!s)
e^{\omega_{p,\sqrt{\gamma}}(t\!-\!s)}[\|f\|_{L^p(\Rd,\mu_s)}\!+\!\xi_0(t-s)]\!+\!
c_1(t-s)\xi_0,
\label{botti-1}
\end{align}
where $p_{\gamma}(t):=\gamma^{-1}(p-1)(e^{\kappa_0 K^{-1}(t-s)}-1)+p$ for any $\gamma>1$,
$\kappa_0$ being the ellipticity constant in Hypothesis $\ref{base}(ii)$ and $K$ being the constant in \eqref{LSI}, $\omega_{p,\sigma}=\xi_1+(\xi_2^+)^2\sigma[(\sigma-1)(p-1)\kappa_0]^{-1}$,
$\gamma'$ is given by \eqref{edoardo} and
the functions $c_0, c_1:(0,+\infty)\to\R^+$ are continuous and blow up at zero.
\end{thm}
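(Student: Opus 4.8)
The plan is to establish \eqref{botti} and \eqref{botti-1} first for regular data, reducing everything to an ordinary differential inequality by means of the logarithmic Sobolev inequality \eqref{LSI}, and then to extend the estimates to $f\in L^p(\Rd,\mu_s)$ by density. Fix $f\in C_b(\Rd)$: by Corollary~\ref{coro-classica} (whose hypotheses, including \eqref{hyp-holder}, are contained in Hypothesis~\ref{cond-iper}(i)) and by Theorem~\ref{prop-3.8}, which applies under Hypotheses~\ref{base} and \ref{cond-iper}, the function $u:={\mathcal N}(\cdot,s)f$ is a globally defined bounded classical solution of \eqref{n-sm-pb} with $\sqrt{t-s}\,\nabla_xu$ bounded on each strip $[s,T]\times\Rd$. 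For $\gamma>1$ set $P(t):=p_\gamma(t)$, $m(t):=\int_{\Rd}|u(t,\cdot)|^{P(t)}\,d\mu_t$ and $n(t):=m(t)^{1/P(t)}$; note that $P(t)\ge p>p_0>1$ for every $t$. Using the pointwise identity ${\mathcal A}(|u|^{P})=P|u|^{P-2}u\,{\mathcal A}u+P(P-1)|u|^{P-2}{\mathcal F}(u)$ (with $P=P(t)$) together with the equation $D_tu-{\mathcal A}u=\psi_u$, Lemma~\ref{derivative} — applied after the regularisation discussed below — yields
$$
m'(t)=\int_{\Rd}\Big(P|u|^{P-2}u\psi_u+P'|u|^{P}\log|u|-P(P-1)|u|^{P-2}{\mathcal F}(u)\Big)\,d\mu_t,\qquad P=P(t),\;\,P'=P'(t).
$$
Estimating the logarithmic term by \eqref{LSI} with $q=P(t)$ (legitimate since $P'(t)\ge0$) makes, in $(\log n)'=-P'P^{-2}\log m+P^{-1}m'm^{-1}$, the $\log m$-contributions cancel exactly, so that, discarding the non-positive term $-(P-1)m^{-1}\int_{\Rd}|u|^{P-2}({\mathcal F}(u)-\kappa_0|\nabla u|^2)\,d\mu_t$ furnished by ellipticity (Hypothesis~\ref{base}(ii)),
$$
(\log n)'(t)\le \frac{1}{m}\int_{\Rd}|u|^{P-2}u\psi_u\,d\mu_t+\frac{KP'-(P-1)\kappa_0}{m}\int_{\Rd}|u|^{P-2}|\nabla u|^2\,d\mu_t.
$$
The key property of the exponent $p_\gamma$ is that $KP'(t)-(P(t)-1)\kappa_0=-\kappa_0(p-1)(\gamma-1)\gamma^{-1}<0$; this surviving dissipation absorbs, by Young's inequality, the contribution $\xi_2^+|u|^{P-1}|\nabla u|$ stemming from the bound $u\psi_u\le\xi_0|u|+\xi_1u^2+\xi_2|u||v|$ of Hypothesis~\ref{cond-iper}(i), the residual $L^P$-mass producing the constant $\omega_{p,\gamma}$, while $\xi_0|u|$ is controlled by $\xi_0m^{(P-1)/P}$ through H\"older's inequality ($\mu_t$ being a probability measure). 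One thus arrives at $n'(t)\le\omega_{p,\gamma}n(t)+\xi_0$, whose integration gives \eqref{botti}.

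The main obstacle is that this computation is only formal: the function $(t,x)\mapsto|u(t,x)|^{P(t)}$ is not of class $C^{1,2}_b$ — its spatial derivatives blow up as $t\downarrow s$ and it is not constant outside a compact set — and, when $P(t)<2$, it fails to be twice differentiable at $u=0$, so Lemma~\ref{derivative} cannot be invoked directly. The remedy is the customary double regularisation: replace $|\cdot|^{P(t)}$ by a $C^2$ function coinciding with it away from the origin, multiply by a cut-off $\vartheta(\varphi(\cdot)/R)$, apply Lemma~\ref{derivative} on $[a,t]$ with $s<a<t$, and then successively let $R\to+\infty$, remove the regularisation and let $a\downarrow s$. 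Hypotheses~\ref{cond-iper}(ii)--(iii) — the Lyapunov-type bound ${\mathcal A}\widetilde\varphi+k_1|\nabla\widetilde\varphi|\le a\widetilde\varphi$ and the polynomial-in-$\varphi$ control of $|Q(t,x)x|$, ${\rm Tr}\,Q(t,x)$ and $\langle b(t,x),x\rangle$ — are exactly what makes the error terms generated by the cut-off (involving $\nabla\varphi$, ${\mathcal A}\varphi$ and ${\rm Tr}\,Q$) vanish as $R\to+\infty$ and the limiting integrals converge; truncation arguments of this type already appear in Theorem~\ref{prop-3.8} and in the linear theory to which Section~\ref{sect-2} refers.

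For the gradient estimate \eqref{botti-1} I would use the evolution law ${\mathcal N}(t,s)f={\mathcal N}(t,r)\big({\mathcal N}(r,s)f\big)$ with $r:=(s+t)/2$. On the interval $[s,r]$ one applies \eqref{botti} with the parameter $\sqrt\gamma>1$, obtaining ${\mathcal N}(r,s)f\in L^{p_{\sqrt\gamma}(r)}(\Rd,\mu_r)$ with a bound carrying $e^{\omega_{p,\sqrt\gamma}(r-s)}$ and the additive $\xi_0$-term. On the interval $[r,t]$ one uses a pointwise gradient estimate for the nonlinear operator — deduced from the mild formula \eqref{defi_mild}, the pointwise estimates \eqref{point_p_D} for $\nabla_xG$, and the bound $|\psi_v(\tau,x)|\le\xi_0+L(|v(\tau,x)|+|\nabla_xv(\tau,x)|)$ (Hypothesis~\ref{cond-iper}(i) and Remark~\ref{marsella}(i)) — combined with the hypercontractivity of the \emph{linear} evolution operator $G(t,r)$, which raises the integrability exponent at the rate $\kappa_0K^{-1}$ thanks again to \eqref{LSI}. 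Calibrating the second gain parameter $\gamma'$ (the quantity \eqref{edoardo}) so that the exponent attained at time $t$ is at least $p_\gamma(t)$ — which is possible precisely because $\gamma>1$, hence $\sqrt\gamma,\gamma'>1$ — one reaches \eqref{botti-1}, the functions $c_0$ and $c_1$ (continuous and blowing up at $0$) collecting the $(t-r)^{-1/2}$-type singularities of the gradient estimates on $[r,t]$. Here the delicate point is the bookkeeping of exponents rather than any individual estimate; the explicit constants then follow by a long but routine tracking. Finally, \eqref{botti} and \eqref{botti-1} extend to $f\in L^p(\Rd,\mu_s)$ by approximating $f$ in $L^p(\Rd,\mu_s)$ by functions $f_n\in C_b(\Rd)$: Corollary~\ref{coro-Lp} gives ${\mathcal N}(t,s)f_n\to{\mathcal N}(t,s)f$ in $W^{1,p}(\Rd,\mu_t)$ for $t>s$, and passing to the limit in the estimates for $f_n$, using Fatou's lemma, shows that ${\mathcal N}(t,s)f\in W^{1,p_\gamma(t)}(\Rd,\mu_t)$ and that \eqref{botti}--\eqref{botti-1} hold.
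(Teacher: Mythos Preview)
Your argument for \eqref{botti} follows the same line as the paper's: derive a differential inequality for $t\mapsto\|u(t,\cdot)\|_{L^{p_\gamma(t)}(\Rd,\mu_t)}$ via Lemma~\ref{derivative} and the logarithmic Sobolev inequality~\eqref{LSI}, regularise by cut-off and by replacing $u^2$ with $u^2+\varepsilon$ (the paper works with $v_{n,\varepsilon}=(\vartheta_n^2u^2+\varepsilon)^{1/2}$ and $\vartheta_n=\zeta(|\cdot|/n)$, and starts from $f\in C^1_b(\Rd)$ rather than $C_b(\Rd)$), then integrate and pass to the limit. The role you assign to Hypotheses~\ref{cond-iper}(ii)--(iii) in controlling the cut-off errors is exactly the one they play in the paper. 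The density step is also the same.

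For \eqref{botti-1}, however, your route differs from the paper's and is more involved than necessary. The paper does \emph{not} split at the midpoint and does \emph{not} invoke linear hypercontractivity of $G$ on the second subinterval. Instead it chooses the intermediate time $t-\varepsilon$ and the parameter $\gamma'$ jointly via \eqref{edoardo} so that $p_{\gamma'}(t-\varepsilon)=p_\gamma(t)$: Step~1 applied on $[s,t-\varepsilon]$ with parameter $\gamma'$ already lands ${\mathcal N}(t-\varepsilon,s)f$ in $L^{p_\gamma(t)}(\Rd,\mu_{t-\varepsilon})$. On the short second interval $[t-\varepsilon,t]$ one then applies only the ready-made estimate~\eqref{ciclismo} at the \emph{fixed} exponent $p_\gamma(t)$, which supplies the gradient bound with its $(t-r)^{-1/2}$ singularity and the additive $\xi_0$-term. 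The inequality $\gamma'\ge\sqrt\gamma$ (checked directly from \eqref{edoardo}) then turns the rate $\omega_{p,\gamma'}$ into $\omega_{p,\sqrt\gamma}$ via the monotonicity of $\sigma\mapsto\omega_{p,\sigma}$. Thus $\gamma'$ is the hypercontractivity parameter on the \emph{first} subinterval, not the second, and $\sqrt\gamma$ enters only as a lower bound for $\gamma'$; you have these roles inverted.

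Your proposed route --- $\sqrt\gamma$ on $[s,(s+t)/2]$, then a pointwise gradient estimate for ${\mathcal N}$ combined with linear hypercontractivity of $G$ on $[(s+t)/2,t]$ --- is not clearly wrong, but it is substantially harder to close. Note first that $p_{\sqrt\gamma}((s+t)/2)<p_\gamma(t)$ whenever $e^{\kappa_0K^{-1}(t-s)/2}+1>\sqrt\gamma$, so an exponent gain on the second half is genuinely needed. But the mild formula for $\nabla_xu$ on $[r,t]$ reintroduces $|\nabla_xu(\sigma,\cdot)|$ through $|\psi_u|$, so one would have to run a Gronwall-type argument with a \emph{time-varying} target exponent inside the integral, and it is not evident this closes with the constants $c_0,c_1$ in the stated form. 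The paper's scheme sidesteps all of this by arranging that no exponent gain is needed on the second subinterval.
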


\begin{proof}
To begin with, we observe that it suffices to prove \eqref{botti} and \eqref{botti-1}
for functions $f\in C_b^1(\Rd)$. Indeed, in the general case, the assertion follows
approximating $f$ with a sequence $(f_n)\subset C^1_b(\Rd)$ which converges to $f$ in
$L^p(\Rd,\mu_s)$. By \eqref{dip-p-weak-0}, ${\mathcal N}(t,s)f_n$ converges
to ${\mathcal N}(t,s)f$ in $W^{1,p}(\Rd,\mu_t)$ for almost every $t>s$.
Hence, writing \eqref{botti} and \eqref{botti-1} with $f$ being replaced by $f_n$
and letting $n$ tend to $+\infty$, the assertion follows at once by applying Fatou lemma.

We split the rest of the proof into two steps.
In the first one we prove \eqref{botti} and in the latter one \eqref{botti-1}.

{\em Step 1.} Fix $f\in C^1_b(\Rd)$, $n\in\N$, $\varepsilon>0$ and set
$\beta_{n,\varepsilon}(t):=\|v_{n,\varepsilon}(t,\cdot)\|_{p_{\gamma}(t)}$ for any $t>s$,
where $v_{n, \varepsilon}= (\vartheta^2_n {\mathcal N}(\cdot,s)f+\varepsilon)^{1/2}$ and
$\vartheta_n= \zeta(n^{-1}|x|)$ for any $x \in \Rd$ and $n \in \N$.
Here, $\zeta$ is a smooth function such that $\one_{B_1}\le\zeta \le\one_{B_2}$.
Moreover, we set $\varphi_{1,n}=|\zeta'(|\cdot|/n)|\varphi$, $\varphi_{2,n}=|\zeta''(|\cdot|/n)|\varphi$ for any $n\in\N$.
We recall that in \cite[Theorem 5.4]{KunLorLun09Non} it has been proved that $\sup_{t\in I}\|\varphi\|_{L^1(\Rd,\mu_t)}<+\infty$. Hence,
the functions $t\mapsto \|\varphi_{j,n}\|_{L^p(\Rd,\mu_t)}$ $(j=1,2)$ are bounded in $I$ and pointwise converge to zero as $n\to +\infty$.

By definition, the function $u=\mathcal N(\cdot,s)f$
belongs to $C^{0,1}_b([s,\tau]\times\Rd)$ for any $\tau>s$ and is a classical solution to problem \eqref{n-sm-pb}.
Moreover Lemma \ref{derivative} shows that $\beta_{n,\varepsilon}$ is differentiable in $(s,+\infty)$ and a straightforward computation reveals that
\begin{align*}
\beta_{n,\varepsilon}'(t)=&-\frac{p_{\gamma}'(t)}{p_{\gamma}(t)}\beta_{n,\varepsilon}(t)
\log(\beta_{n,\varepsilon}(t))\\
&+\frac{1}{p_{\gamma}(t)}(\beta_{n,\varepsilon}(t))^{1-p_{\gamma}(t)}
\int_{\Rd}\{D_t[(v_{n,\varepsilon}(t,\cdot))^{p_{\gamma}(t)}]
-\A(t)[(v_{n,\varepsilon}(t,\cdot)^{p_{\gamma}(t)}]\}d\mu_t.
\end{align*}
Taking into account that
\begin{align*}
&D_t[(v_{n,\varepsilon}(t,\cdot))^{p_{\gamma}(t)}]-\A[(v_{n,\varepsilon}(t,\cdot)^{p_{\gamma}(t)}]\\
=&p_{\gamma}'(t)(v_{n,\varepsilon}(t,\cdot))^{p_{\gamma}(t)}\log(v_{n,\varepsilon}(t,\cdot))
\!-\!p_{\gamma}(t)(p_{\gamma}(t)\!-\!1)(v_{n,\varepsilon}(t,\cdot))^{p_{\gamma}(t)-2}({\mathcal F}(v_{n,\varepsilon}))(t,\cdot)\\
&+p_{\gamma}(t)(v_{n,\varepsilon}(t,\cdot))^{p_{\gamma}(t)-1}(D_tv_{n,\varepsilon}(t,\cdot)
-\A(t) v_{n,\varepsilon}(t,\cdot))
\end{align*}
and
\begin{align*}
D_tv_{n,\varepsilon}
-\A v_{n,\varepsilon}=&\vartheta_n^2v_{n,\varepsilon}^{-1}
u\psi_u-\varepsilon\vartheta_n^2{\mathcal F}(u)
v_{n,\varepsilon}^{-3}-{\rm Tr}(QD^2\vartheta_n)v_{n,\varepsilon}^{-1}
\vartheta_nu^2-\varepsilon u^2v_{n,\varepsilon}^{-3}{\mathcal F}(\vartheta_n)\notag\\
&-\langle b,\nabla\vartheta_n\rangle v_{n,\varepsilon}^{-1}\vartheta_n u^2
-2(2\varepsilon u+\vartheta_n^2u^3)\langle Q\nabla\vartheta_n,\nabla_xu\rangle\vartheta_nv_{n,\varepsilon}^{-3}\\
=:& \vartheta_n^2v_{n,\varepsilon}^{-1}
u\psi_u-\varepsilon\vartheta_n^2{\mathcal F}(u)
v_{n,\varepsilon}^{-3}+g_{n,\varepsilon}(t,\cdot),
\end{align*}
we deduce
\begin{align*}
\beta_{n,\varepsilon}'(t)=&
(\beta_{n,\varepsilon}(t))^{1-p_{\gamma}(t)}
\int_{\Rd}(v_{n,\varepsilon}(t,\cdot))^{p_{\gamma}(t)-1}g_{n,\varepsilon}(t,\cdot)d\mu_t
-\frac{p_{\gamma}'(t)}{p_{\gamma}(t)}\beta_{n,\varepsilon}(t)
\log(\beta_{n,\varepsilon}(t))\notag\\
&-(p_{\gamma}(t)-1)(\beta_{n,\varepsilon}(t))^{1-p_{\gamma}(t)}\int_{\Rd}(v_{n,\varepsilon}(t,\cdot))^{p_{\gamma}(t)-2}
({\mathcal F}(v_{n,\varepsilon}))(t,\cdot)d\mu_t\notag\\
&+\frac{p_{\gamma}'(t)}{p_{\gamma}(t)}(\beta_{n,\varepsilon}(t))^{1-p_{\gamma}(t)}
\int_{\Rd}(v_{n,\varepsilon}(t,\cdot))^{p_{\gamma}(t)}\log(v_{n,\varepsilon}(t,\cdot))d\mu_t\notag\\
&+(\beta_{n,\varepsilon}(t))^{1-p_{\gamma}(t)}\int_{\Rd}(v_{n,\varepsilon}(t,\cdot))^{p_{\gamma}(t)-2}
\vartheta_n^2u(t,\cdot)\psi_u(t,\cdot)d\mu_t\notag\\
&-\varepsilon(\beta_{n,\varepsilon}(t))^{1-p_{\gamma}(t)}
\int_{\Rd}\vartheta_n^2(v_{n,\varepsilon}(t,\cdot))^{p_{\gamma}(t)-4}({\mathcal F}(u))(t,\cdot))d\mu_t.
\end{align*}

Using Hypotheses \ref{cond-iper}(i), (iv), the expression of the function $t\mapsto p_{\gamma}(t)$ and Hypothesis \ref{base}(ii), we can estimate
\begin{align}
\beta_{n,\varepsilon}'(t)\le &
(\beta_{n,\varepsilon}(t))^{1-p_{\gamma}(t)}
\int_{\Rd}(v_{n,\varepsilon}(t,\cdot))^{p_{\gamma}(t)-1}g_{n,\varepsilon}(t,\cdot)d\mu_t\notag\\
&+\xi_0(\beta_{n,\varepsilon}(t))^{1-p_{\gamma}(t)}\int_{\Rd}\vartheta_n^2|u(t,\cdot)|
(v_{n,\varepsilon}(t,\cdot))^{p_{\gamma}(t)-2}d\mu_t
+\xi_1\beta_{n,\varepsilon}(t)\notag\\
&-\varepsilon\xi_1(\beta_{n,\varepsilon}(t))^{1-p_{\gamma}(t)}
\int_{\Rd}(v_{n,\varepsilon}(t,\cdot))^{p_{\gamma}(t)-2}d\mu_t\notag\\
&-(p-1)(1-\gamma^{-1})(\beta_{n,\varepsilon}(t))^{1-p_{\gamma}(t)}
\int_{\Rd}(v_{n,\varepsilon}(t,\cdot))^{p_{\gamma}(t)-2}({\mathcal F}(v_{n,\varepsilon}))(t,\cdot)d\mu_t\notag\\
&+\xi_2^+(\beta_{n,\varepsilon}(t))^{1-p_{\gamma}(t)}\int_{\Rd}\vartheta_n^2(v_{n,\varepsilon}(t,\cdot))^{p_{\gamma}(t)-2}
|u(t,\cdot)||\nabla_xu(t,\cdot)|d\mu_t\notag\\
&-\varepsilon(\beta_{n,\varepsilon}(t))^{1-p_{\gamma}(t)}
\int_{\Rd}\vartheta_n^2(v_{n,\varepsilon}(t,\cdot))^{p_{\gamma}(t)-4}{\mathcal F}(u(t,\cdot))d\mu_t.
\label{bedbreakfast}
\end{align}
Further, since
${\mathcal F}(v_{n,\varepsilon})=\vartheta_n^2{\mathcal F}(\vartheta_n)u^4v_{n,\varepsilon}^{-2}
+\vartheta_n^4u^2{\mathcal F}(u)v_{n,\varepsilon}^{-2}+2\vartheta_n^2 u^3v_{n,\varepsilon}^{-2}\langle Q \nabla_xu,\nabla\vartheta_n\rangle$
and
\begin{align*}
&\int_{\Rd}(v(t, \cdot))^{p_{\gamma}(t)-4}\vartheta_n^2(u(t,\cdot))^3\langle Q(t,\cdot) \nabla_xu(t,\cdot),\nabla\vartheta_n\rangle d\mu_t\\
\le & \delta \int_{\Rd} (v(t, \cdot))^{p_{\gamma}(t)-4}\vartheta_n^4(u(t,\cdot))^2({\mathcal F}(u))(t,\cdot) d\mu_t\\
&+ \frac{1}{\delta} \int_{\Rd} (v(t, \cdot))^{p_{\gamma}(t)-4}(u(t,\cdot))^4({\mathcal F}(\vartheta_n))(t,\cdot)d\mu_t,
\end{align*}
it follows that
\begin{align*}
&\int_{\Rd}(v_{n,\varepsilon}(t,\cdot))^{p_{\gamma}(t)-2}({\mathcal F}(v_{n,\varepsilon}))(t,\cdot)d\mu_t\\
\ge &(1\!-\!\delta)\!\int_{\Rd}\vartheta_n^4(u(t,\cdot))^2
(v_{n,\varepsilon}(t,\cdot))^{p_{\gamma}(t)-4}({\mathcal F}(u))(t,\cdot)d\mu_t-C_{\varepsilon,\delta}(t)\!\int_{\Rd}\varphi_{1,n} d\mu_t
\end{align*}
for any $\delta>0$ and some continuous function $C_{\varepsilon, \delta}:[s,+\infty)\to \R^+$.
Moreover, applying H\"older and Young inequalities and Hypothesis \ref{base}(ii) we can infer that
\begin{align*}
&\int_{\Rd}\vartheta_n^2(v_{n,\varepsilon}(t,\cdot))^{p_{\gamma}(t)-2}
|u(t,\cdot)||\nabla_xu(t,\cdot)|d\mu_t\nonumber\\
\le &\frac{\delta_1}{\kappa_0}\int_{\Rd}\vartheta_n^4(v_{n,\varepsilon}(t,\cdot))^{p_{\gamma}(t)-4}
|u(t,\cdot)|^2({\mathcal F}(u))(t,\cdot)d\mu_t
+\frac{1}{4\delta_1}(\beta_{n,\varepsilon}(t,\cdot))^{p_{\gamma}(t)}
\end{align*}
for any $\delta_1>0$ and
\begin{align*}
\int_{\Rd}\vartheta_n^2u(t,\cdot)
(v_{n,\varepsilon}(t,\cdot))^{p_{\gamma}(t)-2}d\mu_t
\le \int_{\Rd}(v_{n,\varepsilon}(t,\cdot))^{p_{\gamma}(t)-1}d\mu_t
\le (\beta_{n,\varepsilon}(t,\cdot))^{p_{\gamma}(t)-1}.
\end{align*}
Hence,
\begin{align*}
\beta_{n,\varepsilon}'(t)\le &\xi_0+\bigg (\xi_1+\frac{\xi_2^+}{4\delta_1}\bigg )\beta_{n,\varepsilon}(t)+(\beta_{n,\varepsilon}(t))^{1-p_{\gamma}(t)}
\int_{\Rd}(v_{n,\varepsilon}(t,\cdot))^{p_{\gamma}(t)-1}g_{n,\varepsilon}(t,\cdot)d\mu_t\notag\\
&-[(p-1)(1-\gamma^{-1})(1-\delta)-\kappa_0^{-1}\xi_2^+\delta_1]
(\beta_{n,\varepsilon}(t))^{1-p_{\gamma}(t)}\notag\\
&\qquad\quad\times\int_{\Rd}\vartheta_n^4|u(t,\cdot)|^2(v_{n,\varepsilon}(t,\cdot))^{p_{\gamma}(t)-4}
({\mathcal F}(u))(t,\cdot)d\mu_t\notag\\
&+\widetilde C_{\varepsilon,\delta,p,\gamma}(t)(\beta_{n,\varepsilon}(t))^{1-p_{\gamma}(t)}
\int_{\Rd}\varphi_{1,n} d\mu_t\notag\\
&-\varepsilon(\beta_{n,\varepsilon}(t))^{1-p_{\gamma}(t)}
\int_{\Rd}\vartheta_n^2(v_{n,\varepsilon}(t,\cdot))^{p_{\gamma}(t)-4}({\mathcal F}(u))(t,\cdot)d\mu_t\notag\\
&-\varepsilon\xi_1(\beta_{n,\varepsilon}(t))^{1-p_{\gamma}(t)}
\int_{\Rd}(v_{n,\varepsilon}(t,\cdot))^{p_{\gamma}(t)-2}d\mu_t
\end{align*}
for some continuous function $\widetilde C_{\varepsilon,\delta,p,\gamma}:[s,+\infty)\to\R^+$.
Now, we estimate the integral term containing $g_n$. We begin by observing that
\begin{align*}
&-2\int_{\Rd}(2\varepsilon u(t,\cdot)+\vartheta_n^2(u(t,\cdot))^3)\langle Q(t,\cdot)\nabla\vartheta_n,\nabla_xu\rangle\vartheta_n(v_{n,\varepsilon}(t,\cdot))^{p(t)-4}d\mu_t\\
\le & 4\varepsilon\delta_2\int_{\Rd}\vartheta_n^2(v_{n,\varepsilon}(t,\cdot))^{p_{\gamma}(t)-4}({\mathcal F}(u))(t,\cdot)d\mu_t\\
&+\varepsilon\delta_2^{-1}\int_{\Rd}|u(t,\cdot)|^2(v_{n,\varepsilon}(t,\cdot))^{p_{\gamma}(t)-4}
({\mathcal F}(\vartheta_n))(t,\cdot)d\mu_t\\
&+\delta_2\int_{\Rd}\vartheta_n^4|u(t,\cdot)|^2(v_{n,\varepsilon}(t,\cdot))^{p_{\gamma}(t)-4}({\mathcal F}(u))(t,\cdot)d\mu_t\\
&+\delta_2^{-1}\int_{\Rd}\vartheta_n^2|u(t,\cdot)|^4(v_{n,\varepsilon}(t,\cdot))^{p_{\gamma}(t)-4}
({\mathcal F}(\vartheta_n))(t,\cdot)d\mu_t\\
\le & 4\varepsilon\delta_2\int_{\Rd}\vartheta_n^2(v_{n,\varepsilon}(t,\cdot))^{p_{\gamma}(t)-4}
({\mathcal F}(u))(t,\cdot)d\mu_t+\widetilde C_{\varepsilon,\delta_2}(t)\int_{\Rd}\varphi_{1,n} d\mu_t
\\
&+\delta_2\int_{\Rd}\vartheta_n^4|u(t,\cdot)|^2(v_{n,\varepsilon}(t,\cdot))^{p_{\gamma}(t)-4}({\mathcal F}(u))(t,\cdot)d\mu_t
\end{align*}
for some continuous function $\widetilde C_{\varepsilon,\delta_2}:[s,+\infty)\to\R^+$.
Moreover,
\begin{align*}
&-\int_{\Rd}(v_{n,\varepsilon}(t,\cdot))^{p_{\gamma}(t)-4}u(t,\cdot)[\vartheta_n{\mathcal A}(t)\vartheta_n
-\varepsilon u(t,\cdot)({\mathcal F}(\vartheta_n))(t,\cdot)]d\mu_t\\
\le & \overline C_{\varepsilon}(t)\int_{\Rd}(\varphi_{1,n}+\varphi_{2,n}) d\mu_t
\end{align*}
for some positive and continuous function $\overline C_{\varepsilon}:[s,+\infty)\to\R^+$.
Hence, replacing these estimates in \eqref{bedbreakfast}, we get
\begin{align}
\beta_{n,\varepsilon}'(t)\le &\xi_0+\bigg (\xi_1+\frac{\xi_2^+}{4\delta_1}\bigg )\beta_{n,\varepsilon}(t)
+\widehat C_{\varepsilon,\delta,\delta_2,p}(t)(\beta_{n,\varepsilon}(t))^{1-p_{\gamma}(t)}
\int_{\Rd}(\varphi_{1,n}+\varphi_{2,n})d\mu_t\notag\\
&-[(p-1)(1-\gamma^{-1})(1-\delta)-\kappa_0^{-1}\xi_2^+\delta_1-\delta_2]
(\beta_{n,\varepsilon}(t))^{1-p_{\gamma}(t)}\notag\\
&\qquad\quad\times\int_{\Rd}\vartheta_n^4|u(t,\cdot)|^2(v_{n,\varepsilon}(t,\cdot))^{p_{\gamma}(t)-4}
({\mathcal F}(u))(t,\cdot)d\mu_t\notag\\
&-\varepsilon(1-4\delta_2)(\beta_{n,\varepsilon}(t))^{1-p_{\gamma}(t)}
\int_{\Rd}\vartheta_n^2(v_{n,\varepsilon}(t,\cdot))^{p_{\gamma}(t)-4}({\mathcal F}(u))(t,\cdot)d\mu_t\notag\\
&-\varepsilon\xi_1(\beta_{n,\varepsilon}(t))^{1-p_{\gamma}(t)}
\int_{\Rd}(v_{n,\varepsilon}(t,\cdot))^{p_{\gamma}(t)-2}d\mu_t,
\label{giorgio}
\end{align}
where, again, $\widehat C_{\varepsilon,\delta,\delta_2,p}:(s,+\infty)\to\R^+$ is a continuous function.
Choosing $\delta=1/2$, $\delta_1=(p-1)(1-\gamma^{-1})\kappa_0/(4\xi_2)$, if $\xi_2>0$,  $\delta_1=0$, otherwise, and then $\delta_2$
small enough we obtain
\begin{align}
\beta_{n,\varepsilon}'(t)\le &\xi_0+\omega_{p,\gamma}\beta_{n,\varepsilon}(t)+
\widehat C_{\varepsilon,1/2,\delta_2,p}(t)(\beta_{n,\varepsilon}(t))^{1-p_{\gamma}(t)}
\|\varphi_{1,n}+\varphi_{2,n}\|_{L^1(\Rd,\mu_t)}\notag\\
&-\varepsilon\xi_1(\beta_{n,\varepsilon}(t))^{1-p_{\gamma}(t)}
\int_{\Rd}(v_{n,\varepsilon}(t,\cdot))^{p_{\gamma}(t)-2}d\mu_t.
\label{giorgio-1}
\end{align}
Hence, integrating \eqref{giorgio-1} between $s$ and $t$ and letting first $n\to +\infty$ and then $\varepsilon\to 0^+$, by dominated convergence we get
\begin{align*}
\|u(t,\cdot)\|_{L^{p_{\gamma}(t)}(\Rd,\mu_t)}\le\|f\|_{L^p(\Rd,\mu_s)}+\xi_0(t-s)
+\omega_{p,\gamma}\int_s^t\|u(r,\cdot)\|_{L^{p(r)}(\Rd,\mu_r)}dr.
\end{align*}
Applying the Gronwall Lemma we conclude the proof of \eqref{botti}.

{\em Step 2.} To check estimate \eqref{botti-1},
we arbitrarily fix $\gamma\in (1,+\infty)$, $t>s$ and we take
\begin{equation}
\varepsilon=\frac{K}{2\kappa_0}\log\bigg (\frac{\gamma e^{\kappa_0K^{-1}(t-s)}}{\gamma+e^{\kappa_0K^{-1}(t-s)}-1}\bigg ),\qquad\;\,
\gamma'=\gamma\frac{e^{\kappa_0K^{-1}(t-s-\varepsilon)}-1}{e^{\kappa_0K^{-1}(t-s)}-1}.
\label{edoardo}
\end{equation}
With these choices of $\varepsilon$ and $\gamma'$, we have
$p_{\gamma'}(t-\varepsilon)=p_{\gamma}(t)$.
From Step 1, we know that
${\mathcal N}(t-\varepsilon,s)f\in L^{p_{\gamma'}(t-\varepsilon)}(\Rd,\mu_{t-\varepsilon})$ and
\begin{equation}
\|{\mathcal N}(t-\varepsilon,s)f\|_{L^{p_{\gamma}(t)}(\Rd,\mu_{t-\varepsilon})}\leq
e^{\omega_{p,\gamma'}(t-s-\varepsilon)}(\|f\|_{L^p(\Rd,\mu_s)}+\xi_0(t-s)).
\label{magliarosa}
\end{equation}
By the evolution law and estimates \eqref{magliarosa} and \eqref{ciclismo} we get
\begin{align*}
&\sup_{\tau \in (t-\varepsilon,T)}
\sqrt{\tau-t+\varepsilon}\,\|\nabla_x {\mathcal N}(\tau,s)f\|_{L^{p_{\gamma}(t)}(\Rd,\mu_{\tau})}\\
\le &C_{T-t+\varepsilon}\{
e^{\omega_{p,\gamma'}(t-s-\varepsilon)}(\|f\|_{L^p(\Rd,\mu_s)}+\xi_0(t-s))+
(T-t+\varepsilon+1)\|\psi(\cdot,\cdot,0,0)\|_{\infty}\}
\end{align*}
for any $T>t-\varepsilon$.
In particular, taking $T=t$ and using Remark \ref{marsella}(i) to estimate $\|\psi(\cdot,\cdot,0,0)\|_{\infty}\le\xi_0$, we get
\begin{align}
\|\nabla_x {\mathcal N}(t,s)f\|_{L^{p_{\gamma}(t)}(\Rd,\mu_t)}\le &
\frac{C_{\varepsilon}}{\sqrt{\varepsilon}}e^{-\varepsilon\omega_{p,\gamma'}}
e^{\omega_{p,\gamma'}(t-s)}[\|f\|_{L^p(\Rd,\mu_s)}+\xi_0(t-s)]\notag\\
&+\bigg (\sqrt{\varepsilon}+\frac{1}{\sqrt{\varepsilon}}\bigg ) C_{\varepsilon}\xi_0.
\label{raffaele}
\end{align}
Replacing the value of $\varepsilon$ in the expression of $\gamma'$ (see \eqref{edoardo}),
we deduce that
\begin{eqnarray*}
\gamma'
\ge\inf_{\delta\ge 1}
\gamma(\delta-1)^{-1}\bigg [\bigg (\frac{\delta(\gamma+\delta-1)}{\gamma}\bigg )^{1/2}-1\bigg ]=\sqrt{\gamma}
\end{eqnarray*}
and, since the function $\sigma\mapsto \omega_{p,\sigma}$ is decreasing,
$\omega_{p,\gamma'}\le\omega_{p,\sqrt{\gamma}}$. Finally, observing that $e^{-\varepsilon\omega_{p,\gamma'}}$
is bounded in $(s,+\infty)$, $\varepsilon<(2\kappa_0)^{-1}K\log(\gamma)$ (which follows from \eqref{edoardo} recalling that $\gamma'\ge\sqrt{\gamma}$)
and $\varepsilon\sim (2\gamma)^{-1}(\gamma-1)(t-s)$ as $t-s\to 0^+$,
formula \eqref{botti-1} follows immediately replacing in
\eqref{raffaele} the value of $\varepsilon$ given by \eqref{edoardo}.
\end{proof}

\begin{rmk}
{\rm As the proof of Theorem \ref{vacanza} shows, if $\xi_2\le 0$, then we can take $\gamma=1$ and
$\omega_{p,1}=\xi_1$ in \eqref{botti}.}
\end{rmk}

\subsection{Supercontractivity}

In the next theorem we prove a stronger result than Theorem \ref{vacanza}, i.e., we
prove that the nonlinear evolution operator ${\mathcal N}(t,s)$ satisfies a supercontractivity property.
For this purpose, we introduce the following additional assumption.
\begin{hypo}
\label{hyp-super}
There exists a decreasing function $\nu:(0,+\infty)\to\R^+$ blowing up as $\sigma$ tends to $0^+$ such that
\begin{align}
&\int_{\Rd}|f|^p\log(|f|)d\mu_r
-\|f\|_{L^p(\Rd,\mu_r)}^p\log(\|f\|_{L^p(\Rd,\mu_r)})\notag\\
\le & \frac{\nu(\sigma)}{p}\|f\|_{L^p(\Rd,\mu_r)}^p
+\sigma p\int_{\Rd}|f|^{p-2}|\nabla f|^2\one_{\{f\neq 0\}}d\mu_r
\label{LSI_epsilon}
\end{align}
for any $r\in I$, $\sigma>0$ and $f\in C^1_b(\Rd)$.
\end{hypo}

\begin{rmk}
{\rm Sufficient conditions for \eqref{LSI_epsilon} to hold are given in \cite{AngLorOnI}. In particular, it holds true when \eqref{point_p_D11} is satisfied with $p=1$ (see Remark \ref{rmk-2.4}) and there exist $K>0$ and $R>1$ such that $\langle b(t,x),x\rangle \le -K|x|^2\log|x|$ for any $t \in I$ and $|x|\ge R$.}
\end{rmk}

\begin{thm}
Let Hypotheses $\ref{base}$, $\ref{cond-iper}(i)$-$(iii)$ and $\ref{hyp-super}$ be satisfied.
Then, for any $t>s \in I$, $p_0\le p<q<+\infty$ and any $f \in L^p(\Rd, \mu_s)$, ${\mathcal N}(t,s)f$
belongs to $W^{1,q}(\Rd, \mu_t)$ and
\begin{align}
&\|{\mathcal N}(t,s)f\|_{L^q(\Rd, \mu_t)} \le c_2(t-s)(\|f\|_{L^p(\Rd,\mu_s)}+\xi_0(t-s)),
\label{gotico}\\[1mm]
&\|\nabla_x{\mathcal N}(t,s)f\|_{L^q(\Rd, \mu_t)} \le c_3(t-s)\|f\|_{L^p(\Rd,\mu_s)}+c_4(t-s)\xi_0.
\label{gotico-1}
\end{align}
Here, $c_2, c_3,c_4:(0,+\infty)\to \R^+$ are continuous functions such that $\lim_{r \to 0^+}c_k(r)=+\infty$ $(k=2,3,4)$.
\end{thm}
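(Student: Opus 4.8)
The plan is to follow, step by step, the proof of Theorem~\ref{vacanza}, replacing the coarse logarithmic Sobolev inequality \eqref{LSI} with the refined one \eqref{LSI_epsilon}: the presence in \eqref{LSI_epsilon} of the parameter $\sigma$, which may be taken arbitrarily small at the price of enlarging $\nu(\sigma)$, is exactly what permits to raise the integrability exponent from $p$ to any prescribed finite $q$ in any prescribed time $t-s>0$. As in Theorem~\ref{vacanza}, by density and Fatou's lemma it is enough to prove \eqref{gotico} and \eqref{gotico-1} for $f\in C^1_b(\Rd)$: one approximates $f\in L^p(\Rd,\mu_s)$ by a sequence $(f_n)\subset C^1_b(\Rd)$ converging in $L^p(\Rd,\mu_s)$, uses \eqref{dip-p-weak-0} to deduce that ${\mathcal N}(t,s)f_n\to{\mathcal N}(t,s)f$ in $W^{1,p}(\Rd,\mu_t)$ for a.e. $t>s$, and concludes by Fatou's lemma. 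So fix $f\in C^1_b(\Rd)$ and a target time $t_0>s$, and pick a smooth strictly increasing function $p(\cdot)$ on $[s,t_0]$ with $p(s)=p$ and $p(t_0)=q$, together with a positive function $\sigma(\cdot)$ on $(s,t_0)$, both specified below. Exactly as in Theorem~\ref{vacanza}, introduce the cut-offs $\vartheta_n=\zeta(n^{-1}|\cdot|)$, the regularizations $v_{n,\varepsilon}=(\vartheta_n^2{\mathcal N}(\cdot,s)f+\varepsilon)^{1/2}$, set $\beta_{n,\varepsilon}(r):=\|v_{n,\varepsilon}(r,\cdot)\|_{L^{p(r)}(\Rd,\mu_r)}$, differentiate $\beta_{n,\varepsilon}$ by Lemma~\ref{derivative}, and carry over verbatim the algebraic identities, the estimates of the cut-off/curvature remainders $g_{n,\varepsilon}$ in terms of $\|\varphi_{1,n}+\varphi_{2,n}\|_{L^1(\Rd,\mu_r)}$, and the control of $\vartheta_n^2v_{n,\varepsilon}^{-1}u\psi_u$ via the one-sided bound $u\psi(r,x,u,v)\le\xi_0|u|+\xi_1u^2+\xi_2^+|u||v|$ of Hypothesis~\ref{cond-iper}(i) and Young's inequality, reaching an inequality of the type of \eqref{bedbreakfast} with $p_\gamma(t)$ replaced by $p(r)$.

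The only new point is the use of \eqref{LSI_epsilon}: applying it with exponent $p(r)$, with $v_{n,\varepsilon}(r,\cdot)$ in the role of $f$ and with $\sigma=\sigma(r)$, and recalling ${\mathcal F}(\zeta)\ge\kappa_0|\nabla\zeta|^2$, the logarithmic contribution $\frac{p'(r)}{p(r)}\beta_{n,\varepsilon}(r)^{1-p(r)}\int_{\Rd}(v_{n,\varepsilon})^{p(r)}\log(v_{n,\varepsilon})\,d\mu_r-\frac{p'(r)}{p(r)}\beta_{n,\varepsilon}(r)\log\beta_{n,\varepsilon}(r)$ is bounded from above by $\frac{p'(r)\nu(\sigma(r))}{p(r)^2}\beta_{n,\varepsilon}(r)+\frac{p'(r)\sigma(r)}{\kappa_0}\beta_{n,\varepsilon}(r)^{1-p(r)}\int_{\Rd}(v_{n,\varepsilon})^{p(r)-2}{\mathcal F}(v_{n,\varepsilon})\,d\mu_r$. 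After the usual $\delta$-splitting of ${\mathcal F}(v_{n,\varepsilon})$ and of the $\xi_2^+$-term (as in Theorem~\ref{vacanza}), the gradient integral just produced is absorbed into the good negative term $-(p(r)-1)\beta_{n,\varepsilon}(r)^{1-p(r)}\int_{\Rd}(v_{n,\varepsilon})^{p(r)-2}{\mathcal F}(v_{n,\varepsilon})\,d\mu_r$ as soon as we impose $p'(r)\sigma(r)\le\tfrac12\kappa_0(p(r)-1)$ on $(s,t_0)$. This leads to $\beta_{n,\varepsilon}'(r)\le\xi_0+\omega(r)\beta_{n,\varepsilon}(r)+\widehat C_\varepsilon(r)\beta_{n,\varepsilon}(r)^{1-p(r)}\|\varphi_{1,n}+\varphi_{2,n}\|_{L^1(\Rd,\mu_r)}$, with $\widehat C_\varepsilon$ continuous and $\omega(r)=\xi_1+(\xi_2^+)^2[(p-1)\kappa_0]^{-1}+p'(r)\nu(\sigma(r))p(r)^{-2}$. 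Integrating over $[s,t_0]$, letting $n\to+\infty$ (the remainder vanishes since $\sup_{t\in I}\|\varphi\|_{L^1(\Rd,\mu_t)}<+\infty$ and $\varphi_{j,n}\to0$ in $L^1(\Rd,\mu_t)$ thanks to Hypothesis~\ref{cond-iper}(iii)), then $\varepsilon\to0^+$, and finally applying Gronwall's lemma, one gets $\|{\mathcal N}(t_0,s)f\|_{L^q(\Rd,\mu_{t_0})}\le c_2(t_0-s)(\|f\|_{L^p(\Rd,\mu_s)}+\xi_0(t_0-s))$, which is \eqref{gotico}.

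It remains to exhibit an admissible pair $(p(\cdot),\sigma(\cdot))$, and this is where one must be a little careful; it turns out, however, that the simplest choice works. Take $p(\cdot)$ affine, $p(r):=p+\frac{q-p}{t_0-s}(r-s)$, and $\sigma(r):=\frac{\kappa_0(p(r)-1)(t_0-s)}{2(q-p)}$, so that the constraint $p'(r)\sigma(r)\le\tfrac12\kappa_0(p(r)-1)$ holds with equality. Then $\sigma(\cdot)$ is bounded below on $[s,t_0]$ by the positive constant $\sigma_{\rm m}:=\frac{\kappa_0(p-1)(t_0-s)}{2(q-p)}$; since $\nu$ is non-increasing, $\nu(\sigma(\cdot))\le\nu(\sigma_{\rm m})<+\infty$, hence $\omega\in L^\infty(s,t_0)$ and, from the Gronwall estimate, $c_2(t_0-s)$ can be taken of the form $(1+t_0-s)\exp\big((t_0-s)\,C+\tfrac{q-p}{p^2}\,\nu(\sigma_{\rm m})\big)$ with $C$ depending only on $\xi_1,\xi_2^+,p,\kappa_0$; this is continuous in $t_0-s\in(0,+\infty)$ and blows up as $t_0-s\to0^+$, because then $\sigma_{\rm m}\to0$ and $\nu(\sigma_{\rm m})\to+\infty$ (which also explains why the method, as it stands, breaks down for $q=+\infty$, ultraboundedness being obtained separately). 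Thus the real content of the argument is the derivation of the differential inequality — identical to that of Theorem~\ref{vacanza} except for the use of \eqref{LSI_epsilon} — together with the observation that the affine exponent keeps $\sigma$ bounded away from $0$.

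Finally, the gradient estimate \eqref{gotico-1} follows from \eqref{gotico} and the evolution law precisely as in Step~2 of the proof of Theorem~\ref{vacanza}: writing ${\mathcal N}(t_0,s)f={\mathcal N}(t_0,t_0-\varepsilon)\big({\mathcal N}(t_0-\varepsilon,s)f\big)$ with $\varepsilon=(t_0-s)/2$, the function $g:={\mathcal N}(t_0-\varepsilon,s)f$ lies in $L^q(\Rd,\mu_{t_0-\varepsilon})$ with $\|g\|_{L^q(\Rd,\mu_{t_0-\varepsilon})}\le c_2(\varepsilon)(\|f\|_{L^p(\Rd,\mu_s)}+\xi_0\varepsilon)$ by \eqref{gotico}, and then \eqref{ciclismo}, applied with exponent $q$ on $[t_0-\varepsilon,t_0]$, together with Remark~\ref{marsella}(i) (which gives $\|\psi(\cdot,\cdot,0,0)\|_\infty\le\xi_0$), yields $\sqrt{\varepsilon}\,\|\nabla_x{\mathcal N}(t_0,s)f\|_{L^q(\Rd,\mu_{t_0})}\le C_\varepsilon\big(\|g\|_{L^q(\Rd,\mu_{t_0-\varepsilon})}+(\varepsilon+1)\xi_0\big)$, whence \eqref{gotico-1} with continuous functions $c_3,c_4$ blowing up at $0$.
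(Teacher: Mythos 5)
Your proposal is correct, and at heart it is the same Gross-type argument as the paper's: reduce to $f\in C^1_b(\Rd)$ by density, rerun the computation of Theorem \ref{vacanza} for $\beta_{n,\varepsilon}(r)=\|v_{n,\varepsilon}(r,\cdot)\|_{L^{p(r)}(\Rd,\mu_r)}$ with \eqref{LSI_epsilon} in place of \eqref{LSI}, and then get \eqref{gotico-1} from \eqref{gotico} via the evolution law at the midpoint together with \eqref{ciclismo} and Remark \ref{marsella}(i), exactly as in the paper's Step 2. The only genuine difference is the calibration inside Step 1. The paper keeps $\sigma$ constant, takes the exponentially growing exponent $p(t)=e^{\kappa_0(2\sigma)^{-1}(t-s)}(p-1)+1$ (so that $\sigma p'(t)=\tfrac12\kappa_0(p(t)-1)$ identically), and disposes of the zero-order contribution $\nu(\sigma)p'(t)p(t)^{-2}\beta$ by working with the weighted quantity $\zeta_{n,\varepsilon}=e^{-m(t)}\beta_{n,\varepsilon}$, with $m(t)=\nu(\sigma)(p^{-1}-p(t)^{-1})$; the target exponent $q$ is then reached by tuning $\sigma$ as a function of $t-s$. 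You instead fix the affine exponent $p(r)=p+\frac{q-p}{t_0-s}(r-s)$, let $\sigma(r)$ vary in time so that $p'(r)\sigma(r)=\tfrac12\kappa_0(p(r)-1)$, observe that $\sigma(r)\ge\sigma_{\rm m}>0$ on $[s,t_0]$ and use the monotonicity of $\nu$ to keep the extra zero-order coefficient bounded, absorbing it directly into the Gronwall rate instead of into an auxiliary weight. Both devices are legitimate (applying \eqref{LSI_epsilon} with a time-dependent parameter is harmless, since it holds for every $\sigma>0$ at every time), and they produce constants of the same quality: in either case $c_2(r)$ involves $\nu$ evaluated at a point proportional to $r$, hence blows up as $r\to0^+$, and both computations break down for $q=+\infty$, which is why ultraboundedness is treated separately. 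Your version trades the slightly more elegant exact cancellation via $e^{-m(t)}$ for a more elementary bookkeeping (bounded time-dependent Gronwall coefficient); the explicit value you give for the drift $\omega(r)$ differs from the paper's $\widetilde\omega_p=\xi_1+2(\xi_2^+)^2(\kappa_0(p-1))^{-1}$ only through the choice of the Young parameters, which is immaterial for the statement.
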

\begin{proof}
The proof of this result follows the same lines of the proof of Theorem \ref{vacanza}. For this reason we use the notation therein introduced and we limit ourselves to sketching it in the case when $f\in C^1_b(\Rd)$.

{\em Step 1.} Here, we prove \eqref{gotico}. For any $\sigma>0$ and any $t\ge s$ we set $p(t)=e^{\kappa_0(2\sigma)^{-1}(t-s)}(p-1)+1$,
$m(t)= \nu(\sigma)(p^{-1}-(p(t))^{-1})$ and $\zeta_{n, \varepsilon}(t)= e^{-m(t)}\beta_{n, \varepsilon}(t)$. The function $\zeta_{n, \varepsilon}$ is differentiable in $(s,+\infty)$ and arguing as in the proof of the quoted theorem, using \eqref{LSI_epsilon} instead of \eqref{LSI} and the definition of $m(t)$ and $p(t)$, we deduce that
\begin{align*}
\zeta_{n, \varepsilon}'(t)=
\bigg [&(\beta_{n,\varepsilon}(t))^{1-p(t)}
\int_{\Rd}(v_{n,\varepsilon}(t,\cdot))^{p(t)-1}g_{n,\varepsilon}(t,\cdot)d\mu_t\notag\notag\\
&-\frac{p-1}{2}(\beta_{n,\varepsilon}(t))^{1-p(t)}
\int_{\Rd}(v_{n,\varepsilon}(t,\cdot))^{p(t)-2}
({\mathcal F}(v_{n,\varepsilon}))(t,\cdot)d\mu_t\notag\\
&+(\beta_{n,\varepsilon}(t))^{1-p(t)}\int_{\Rd}(v_{n,\varepsilon}(t,\cdot))^{p(t)-2}
\vartheta_n^2u(t,\cdot)\psi_u(t,\cdot)d\mu_t\notag\\
&-\varepsilon(\beta_{n,\varepsilon}(t))^{1-p(t)}
\int_{\Rd}\vartheta_n^2(v_{n,\varepsilon}(t,\cdot))^{p(t)-4}({\mathcal F}(u))(t,\cdot)d\mu_t\bigg ]e^{-m(t)}
\end{align*}
and the same arguments used to prove \eqref{giorgio} show that, if $\delta_2<1/4$, then
\begin{align*}
\zeta_{n,\varepsilon}'(t)\le &
\xi_0e^{-m(t)}+\bigg (\xi_1+\frac{\xi_2^+}{4\delta_1}\bigg )
\zeta_{n,\varepsilon}(t)\\
&+\bigg (
\widehat C_{\varepsilon,\delta,\delta_2,p}(t)(\beta_{n,\varepsilon}(t))^{1-p(t)}
\int_{\Rd}(\varphi_{1,n}+\varphi_{2,n})d\mu_t\notag\\
&\qquad\;-[2^{-1}(p-1)(1-\delta)-\kappa_0^{-1}\xi_2^+\delta_1-\delta_2]
(\beta_{n,\varepsilon}(t))^{1-p(t)}\notag\\
&\qquad\qquad\;\times\int_{\Rd}\vartheta_n^4|u(t,\cdot)|^2(v_{n,\varepsilon}(t,\cdot))^{p(t)-4}
({\mathcal F}(u))(t,\cdot)d\mu_t\notag\\
&\qquad\;-\varepsilon\xi_1(\beta_{n,\varepsilon}(t))^{1-p(t)}
\int_{\Rd}(v_{n,\varepsilon}(t,\cdot))^{p(t)-2}d\mu_t\bigg )e^{-m(t)}.
\end{align*}
Choosing $\delta=1/2$, $\delta_1=(p-1)\kappa_0(8\xi_2)^{-1}$, if $\xi_2>0$, $\delta_1=0$, otherwise, and $\delta_2=[(p-1)\wedge 2]/8$ we get
\begin{align}
\zeta_{n,\varepsilon}'(t)\le \widetilde\omega_p\zeta_{n,\varepsilon}(t)
+e^{-m(t)}\bigg [&\xi_0+
\widehat C_{\varepsilon,\delta_2,p}(t)(\beta_{n,\varepsilon}(t))^{1-p(t)}
\|\varphi_{1,n}+\varphi_{2,n}\|_{L^1(\Rd,\mu_t)}\notag\\
&-\varepsilon\xi_1(\beta_{n,\varepsilon}(t))^{1-p(t)}
\int_{\Rd}(v_{n,\varepsilon}(t,\cdot))^{p(t)-2}d\mu_t\bigg ],
\label{nanna}
\end{align}
where $\widetilde\omega_p=\xi_1+2(\xi_2^+)^2(\kappa_0(p-1))^{-1}$.
Hence, integrating \eqref{nanna} between $s$ and $t$ and letting first $n\to +\infty$ and then $\varepsilon\to 0^+$, by dominated convergence we get
\begin{align*}
e^{-m(t)}\|u_f(t,\cdot)\|_{L^{p(t)}(\Rd,\mu_t)}\le &\xi_0(t-s)+\|f\|_{L^p(\Rd,\mu_s)}\\
&+\widetilde\omega_p\int_s^te^{-m(r)}\|u(r,\cdot)\|_{L^{p(r)}(\Rd,\mu_r)}dr,
\end{align*}
which yields
$\|u_f(t,\cdot)\|_{L^{p(t)}(\Rd,\mu_t)}\le
e^{\widetilde\omega_p(t-s)+m(t)}(\xi_0(t-s)+\|f\|_{L^p(\Rd, \mu_s)})$.
Now, for any $q>p$ and $t>s$, we fix $\sigma= \kappa_0(t-s)(2\log(q-1)-2\log(p-1))^{-1}$. We get $p(t)=q$ and from
the previous inequality the claim follows with
\begin{eqnarray*}
c_2(r)=\exp(\widetilde\omega_pr+ (p^{-1}-q^{-1})\nu(\kappa_0r
(2\log(q-1)-2\log(p-1))^{-1})).
\end{eqnarray*}

{\em Step 2.} Fix $q>p$. By Step 1, ${\mathcal N}((t+s)/2,s)f$ belongs to $L^q(\Rd,\mu_{(t+s)/2})$ and
\begin{eqnarray*}
\|{\mathcal N}((t+s)/2,s)f\|_{L^q(\Rd,\mu_{(t+s)/s})}\le c_2((t-s)/2)\bigg (\|f\|_{L^p(\Rd,\mu_s)}
+\xi_0\frac{t-s}{2}\bigg ).
\end{eqnarray*}
The same arguments used in Step 2 of the proof of Theorem \ref{vacanza} show that
${\mathcal N}(t,s)f\in W^{1,q}(\Rd,\mu_{\tau})$ for any $\tau>(t+s)/2$ and
\begin{align*}
&\sqrt{\frac{t-s}{2}}
\|\nabla_x{\mathcal N}(t,s)f\|_{L^q(\Rd,\mu_{\tau})}\notag\\
\le & C_{(t-s)/2}\bigg [c_2((t-s)/2)\bigg (\|f\|_{L^p(\Rd,\mu_s)}+\xi_0\frac{t-s}{2}\bigg )
+\bigg (\frac{t-s}{2}+1\bigg )\xi_0\bigg ]
\end{align*}
Estimate \eqref{gotico-1} follows with $c_3(r)=\sqrt{2/r}C_{r/2}c_2(r/2)$, $c_4(r)=C_{r/2}[c_2(r/2)\sqrt{r/2}$ $+\sqrt{r/2}+\sqrt{2/r}]$.
\end{proof}

\subsection{Ultraboundedness}
To begin with, we prove a sort of Harnack inequality, which besides the interest in its own will be crucial to prove the ultraboundedness of the nonlinear evolution operator
${\mathcal N}(t,s)$.

\begin{prop}
\label{harnack}
Let Hypotheses $\ref{base}(i)$-$(iii)$, $\ref{cond-iper}(i)$-$(iii)$ be satisfied. Further, suppose that estimate $\eqref{point_p_D11}$ holds, with $p=1$ and some constant $\sigma_1\in\R$.
Then, for any $f\in C_b(\Rd)$, $p>1$, $t>s$ and $x,y\in\Rd$ the following estimate holds true:
\begin{align}
|({\mathcal N}(t,s)f)(x)|^p\le& \exp\bigg (p(1+\xi_1^+)(t-s)+p\Theta(t-s)
\frac{(|x-y|+\xi_2^+(t-s))^2}{4\kappa_0(t-s)^2(p-1)}\bigg )\notag\\
&\quad\;\times [(G(t,s)|f|^p)(y)+\xi_0^p],
\label{cavo}
\end{align}
where $\Theta(r)=(e^{2\sigma_1r}-1)/(2\sigma_1)$, if $\sigma_1>0$ and $\Theta(r)=r$ otherwise.
\end{prop}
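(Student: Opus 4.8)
The plan is to derive a pointwise differential inequality for a suitable function along the lines of the logarithmic-derivative technique used in Theorem \ref{vacanza}, but now keeping track of the spatial variable through the heat kernel representation rather than integrating against $\mu_t$. Fix $f\in C_b(\Rd)$, which we may assume strictly positive by adding $\varepsilon>0$ and letting $\varepsilon\to0^+$ at the end, and set $u=\mathcal N(\cdot,s)f$. For a parameter function $p(\cdot)$ with $p(s)=p$ (to be chosen as $p(r)=1+(p-1)\Theta(t-s)^{-1}\Theta(r-s)^{-1}\cdots$ — more precisely interpolating so that $p(t)$ equals the exponent on the left-hand side and the diffusion term balances correctly), I would study the function
\begin{equation*}
r\mapsto \phi(r):=(G(t,r)(u(r,\cdot))^{p(r)})(x)^{1/p(r)}\cdot\exp(\text{correction}(r)),
\end{equation*}
or rather its logarithm, and compute its $r$-derivative. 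Since $u$ solves $D_ru=\mathcal A(r)u+\psi_u$ classically and $G(t,r)$ satisfies the backward equation $D_r G(t,r)g=-G(t,r)\mathcal A(r)g$, the terms involving $\mathcal A(r)$ acting on $u$ will partly cancel, leaving a good negative quadratic term $-p(r)(p(r)-1)G(t,r)((u)^{p(r)-2}\mathcal F(u))(x)$ from the convexity of $s\mapsto s^{p(r)}$, a logarithmic term $p'(r)G(t,r)((u)^{p(r)}\log u)(x)$, and a reaction contribution $p(r)G(t,r)((u)^{p(r)-1}u\psi_u)(x)$.

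Next I would feed in the structural assumption $u\psi(t,x,u,v)\le\xi_0|u|+\xi_1 u^2+\xi_2|u||v|$ from Hypothesis \ref{cond-iper}(i): the $\xi_1u^2$ term produces a harmless $\xi_1^+$ contribution to the exponential rate, the $\xi_0|u|$ term produces the additive $\xi_0^p$ after a Young-type splitting of $(u)^{p(r)-1}$, and the crucial cross term $\xi_2|u||\nabla u|$ is absorbed into the good gradient term $-p(r)(p(r)-1)G(t,r)((u)^{p(r)-2}\mathcal F(u))(x)$ at the cost of a constant $(\xi_2^+)^2/(4\kappa_0(p(r)-1))$-type term, using ellipticity $\mathcal F(u)\ge\kappa_0|\nabla u|^2$. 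What remains to control is the interplay between the $\log$ term and the gradient term; this is where the pointwise gradient estimate \eqref{point_p_D11} with $p=1$ enters. Indeed, the key analytic point is a pointwise Harnack-type inequality for $G(t,r)$ itself — a logarithmic-Sobolev/Li–Yau style estimate — which, together with $|\nabla_x G(t,s)g|\le e^{\sigma_1(t-s)}G(t,s)|\nabla g|$, allows one to bound $G(t,r)((u)^{p(r)}\log u)(x)$ in terms of $G(t,r)((u)^{p(r)})(x)$, its logarithm, and the gradient integral, picking up exactly the Gaussian factor $\exp(\Theta(t-s)(|x-y|+\xi_2^+(t-s))^2/(4\kappa_0(t-s)^2(p-1)))$ when one finally shifts the base point from $x$ to $y$. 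Concretely, I expect to choose $p(r)$ so that $p'(r)/(p(r)(p(r)-1))$ is a constant tuned to $\kappa_0^{-1}$, which is precisely the mechanism generating the $e^{2\sigma_1 r}$ dependence in $\Theta$.

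After integrating the resulting differential inequality $\frac{d}{dr}\log\phi(r)\le (1+\xi_1^+)+(\text{terms}\to0)$ from $s$ to $t$ and letting the regularizing parameter $\varepsilon\to0^+$, one obtains $|u(t,x)|^p=\phi(s)^{?}\le e^{p(1+\xi_1^+)(t-s)}\cdot(\text{Gaussian})\cdot[(G(t,s)|f|^p)(y)+\xi_0^p]$, which is exactly \eqref{cavo}. I expect the main obstacle to be the second step: rigorously justifying the pointwise manipulation of $G(t,r)$ composed with the nonlinear quantities — in particular differentiating $r\mapsto (G(t,r)(u(r,\cdot))^{p(r)})(x)$, which requires enough regularity of $u$ (guaranteed classical by Corollary \ref{coro-classica}) and a localization/cut-off argument as in Theorem \ref{vacanza} (introducing $\vartheta_n$ and the Lyapunov function $\varphi$, using Hypothesis \ref{cond-iper}(ii)-(iii) to kill the boundary terms) to make the interchange of $D_r$ and $G(t,r)$ legitimate, together with correctly identifying the Gaussian weight, i.e. proving the sharp pointwise comparison $(G(t,s)g)(x)\le C(\text{Gaussian in }|x-y|)(G(t,s)g)(y)$ with the constant $\Theta$ coming from \eqref{point_p_D11}. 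The bookkeeping of the parameters $\delta,\delta_1,\delta_2$ in the Young inequalities, exactly as in the proof of Theorem \ref{vacanza}, should then yield the stated constants.
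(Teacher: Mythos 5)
Your overall strategy (take the logarithmic derivative of a quantity of the form $G(t,r)$ applied to a power of $u$, use the structure condition $u\psi\le\xi_0|u|+\xi_1u^2+\xi_2|u||v|$, absorb the cross term by a quadratic inequality against ${\mathcal F}(u)\ge\kappa_0|\nabla_x u|^2$, and kill the cut-off terms via the Lyapunov function) is the right flavor, but the step that is supposed to produce the Gaussian factor is a genuine gap. You keep the base point fixed at $x$ and plan to ``shift the base point from $x$ to $y$'' at the end by invoking a pointwise Harnack-type/log-Sobolev (Li--Yau style) comparison $(G(t,s)g)(x)\le C(\hbox{Gaussian in }|x-y|)\,(G(t,s)g)(y)$. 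No such estimate is available in this framework: the only pointwise tools are \eqref{point_p_D11} and \eqref{point_p_D}, and the comparison you postulate is essentially the linear ($\psi\equiv 0$) case of the very inequality \eqref{cavo} you are trying to prove, so as written the argument is circular. Likewise, the varying exponent $p(r)$ and the resulting term $p'(r)G(t,r)(u^{p(r)}\log u)$ are both unnecessary and unmanageable here (there is no pointwise log-Sobolev inequality for $G(t,r)$ to control that term), and your claim that the schedule of $p(r)$ generates the factor $\Theta$ is not how $\Theta$ arises.

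The paper's proof avoids all of this by a moving-base-point device with a \emph{fixed} exponent: one sets $\Phi_n(r)=[G(t,r)(\vartheta_n^2(u_f(r,\cdot)^2+\varepsilon)^{p/2})](\phi(r))+\xi_0^p$ with $\phi(r)=\frac{r-s}{t-s}x+\frac{t-r}{t-s}y$, so that $\Phi_n(t)$ recovers $(u_f(t,x)^2+\varepsilon)^{p/2}$ and $\Phi_n(s)$ recovers $(G(t,s)(f^2+\varepsilon)^{p/2})(y)$ automatically. Differentiating $\log\Phi_n$ then produces, besides the terms you listed, the extra term $(t-s)^{-1}\langle[\nabla_xG(t,r)(\vartheta_n^2 v_\varepsilon(r,\cdot))](\phi(r)),x-y\rangle$; this is estimated by \eqref{point_p_D11} with $p=1$ (whence the factor $e^{\sigma_1(t-r)}$), combined with the $\xi_2^+$ contribution, and absorbed jointly into the good term via $\alpha\beta^2-\gamma\beta\ge-\gamma^2/(4\alpha)$, which is exactly what yields $(|x-y|+\xi_2^+(t-s))^2/(4\kappa_0(t-s)^2(p-1))$, while integrating $e^{2\sigma_1(t-r)}$ over $(s,t)$ gives $\Theta(t-s)$. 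Note also the role of adding $\xi_0^p$ \emph{inside} $\Phi_n$: it lets the inhomogeneous term be bounded by $p\xi_0\Phi_n(r)^{-1/p}\le p$ after a H\"older step, which is cleaner than the Young splitting you sketch. In short, your proposal is missing the key idea (interpolating the evaluation point along the segment from $y$ to $x$ inside the quantity being differentiated) and replaces it with an auxiliary estimate that is not proved and is essentially equivalent to the assertion itself; to repair it you should drop the varying exponent, adopt the moving point $\phi(r)$, and carry out the absorption exactly once, jointly for the $|x-y|$ and $\xi_2^+$ terms.
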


\begin{proof}
To begin with, we observe that it suffices to prove \eqref{cavo} for
functions in $C^1_b(\Rd)$. Indeed, if $f\in C_b(\Rd)$, we can
determine a sequence $(f_n)\subset C^1_b(\Rd)$, bounded with respect to the sup-norm
and converging to $f$ locally uniformly in $\Rd$.
Writing \eqref{cavo} with $f$ replaced by $f_n$ and using Theorem \ref{compleanno}
and \cite[Proposition 3.1(i)]{KunLorLun09Non}, we can let $n$ tend to $+\infty$ and complete the proof.

So, let us fix $f\in C^1_b(\Rd)$ and set
$\Phi_n(r):=[G(t,r)(\vartheta_n^2v_\varepsilon(r,\cdot))](\phi(r))+\xi_0^p$ for any $n\in\N$ and $r\in (s,t)$,
where $v_\varepsilon=(u_f^2+\varepsilon)^{p/2}$, $u_f={\mathcal N}(\cdot,s)f$ (see Theorem \ref{prop-3.8}),
$\phi(r)=(r-s)(t-s)^{-1}x+(t-r)(t-s)^{-1}y$ and $(\vartheta_n)$ is a standard sequence of
cut-off functions. We note that $\Phi_n(r)\geq C_\Phi>0$ for any $r\in[s,t]$ and any $n\geq n_0$. This is clear if $\xi_0>0$.
Suppose that $\xi_0=0$.
If $r<t$ then $\Phi_n(r)$ is positive since $v_\varepsilon>0$. If $r=t$, then
$\Phi_n(t)=(\vartheta_n(x))^2v_\varepsilon(t,x)$ which is positive if we choose
$n\in\N$ large enough such that $x\in {\rm supp}(\vartheta_n)$.
Moreover, $\Phi_n\in C^1((s,t))$. Hence $\log(\Phi_n)\in C^1((s,t))$ and we have
\begin{align*}
\frac{d}{dr}\log(\Phi_n(r))
= \frac{1}{\Phi_n(r)}\{&[G(t,r)(\vartheta_n^2D_tv_\varepsilon(r,\cdot))-
\A(\vartheta_n^2v_\varepsilon(r,\cdot)))]
(\phi(r)) \\
& +(t-s)^{-1}\langle [\nabla_xG(t,r)(\vartheta_n^2v_\varepsilon(r,\cdot))](\phi(r)),x-y\rangle\}.
\end{align*}
We observe that
\begin{align*}
D_t(\vartheta_n^2v_\varepsilon)-\A(\vartheta_n^2v_{\varepsilon})
= & p\vartheta_n^2(u_f^2+\varepsilon)^{\frac{p}{2}-1}u_f\psi_{u_f}
-p\vartheta_n^2v_{\varepsilon}^{1-\frac{4}{p}}((p-1)u_f^2+\varepsilon){\mathcal F}(u_f)\\
& -4pv_{\varepsilon}^{1-\frac{2}{p}}\vartheta_nu_f\langle Q\nabla\vartheta_n,\nabla_xu_f\rangle
-2\vartheta_nv_\varepsilon \A\vartheta_n-2v_{\varepsilon}{\mathcal F}(\vartheta_n),
\end{align*}
and
\begin{align*}
|\nabla_xG(t,r)(\vartheta_n^2v_{\varepsilon}(r,\cdot))|
\leq & e^{\sigma_1(t-r)}G(t,r)|\nabla_x(\vartheta_n^2v_\varepsilon(r,\cdot))|\\
\leq & pe^{\sigma_1(t\!-\!r)}G(t,r)(\vartheta_n^2(v_{\varepsilon}(r,\cdot))^{1\!-\!\frac{2}{p}}|u_f(r,\cdot)|\kappa_0^{-\frac{1}{2}}\!(({\mathcal F}(u_f))(r,\cdot))^{\frac{1}{2}})\\
&+e^{\sigma_1(t-r)}G(t,r)(2\vartheta_n|\nabla\vartheta_n|v_\varepsilon(r,\cdot)).
\end{align*}
Hence, we get
\begin{align*}
&\frac{d}{dr}\log\Phi_n(r) \\
\leq & \frac{1}{\Phi_n(r)}
\bigg\{p\frac{|x-y|}{t-s}e^{\sigma_1(t-r)}G(t,r)[\vartheta_n^2
(v_{\varepsilon}(r,\cdot))^{1-\frac{2}{p}}|u_f(r,\cdot)|\kappa_0^{-1/2}(({\mathcal F}(u_f))(r,\cdot))^{\frac{1}{2}}]\\
&\qquad\quad\;\;\,-G(t,r)\zeta_{n,\varepsilon}(r,\cdot) +\frac{|x-y|}{t-s}e^{\sigma_1(t-r)}G(t,r)(2\vartheta_n|\nabla\vartheta_n|
v_\varepsilon(r,\cdot))\bigg\}(\phi(r)),
\end{align*}
where
\begin{align*}
\zeta_{n,\varepsilon}=&
2\vartheta_n(\A\vartheta_n)v_{\varepsilon}+2{\mathcal F}(\vartheta_n) v_\varepsilon+4p\vartheta_nv_{\varepsilon}^{1-\frac{2}{p}}u_f\langle Q\nabla\vartheta_n,\nabla_xu_f\rangle\\
&-p\vartheta_n^2v_{n,\varepsilon}^{1-\frac{2}{p}}u_f\psi_{u_f}
+p\vartheta_n^2 v_{\varepsilon}^{1-\frac{4}{p}}((p-1)u_f+\varepsilon)
{\mathcal F}(u_f).
\end{align*}
From Hypothesis \ref{cond-iper}(i) it follows that
\begin{align}
\frac{d}{dr}\log\Phi_n(r)
\leq & \frac{1}{\Phi_n(r)}
G(t,r)\bigg\{-2\vartheta_n(\A(r)\vartheta_n)v_{\varepsilon}(r,\cdot)
+p\xi_0\vartheta_n^2v_{\varepsilon}^{1-\frac{1}{p}}
+\xi_1^+\vartheta_n^2pv_{\varepsilon}(r,\cdot)\notag\\
&\qquad\qquad\qquad\;+4p(v_{\varepsilon}(r,\cdot))^{1-\frac{2}{p}}\vartheta_n|u_f(r,\cdot)|
|\langle Q(r,\cdot)\nabla\vartheta_n,\nabla_xu_f(r,\cdot)\rangle|\notag\\
&\qquad\qquad\qquad\; - p\vartheta_n^2v_\varepsilon(r,\cdot)
\Big[((p-1)(u_f(r,\cdot))^2+\varepsilon)(h_{\varepsilon}(r,\cdot))^2 \notag\\
&\qquad\qquad\qquad\; -|u_f(r,\cdot)|h_{\varepsilon}(r,\cdot)\frac{e^{\sigma_1(t-r)}|x-y|+\xi_2^+(t-s)}{\sqrt{\kappa_0} (t-s)}\Big]\bigg\}(\phi(r))\notag\\
&+\frac{|x-y|}{t-s}e^{\sigma_1(t-r)}\{G(t,r)[|\nabla\vartheta_n|v_\varepsilon(r,\cdot)]
\}(\phi(r)),
\label{999}
\end{align}
where $h_\varepsilon= (u_f^2+\varepsilon)^{-1}\sqrt{{\mathcal F}(u_f)}$.
Using the Cauchy-Schwarz inequality we can estimate
\begin{align*}
v_{\varepsilon}^{1-\frac{2}{p}}\vartheta_n|u_f|
|\langle Q\nabla\vartheta_n,\nabla_xu_f\rangle|
\le &v_{\varepsilon}^{1-\frac{2}{p}}\vartheta_n|u_f|
\sqrt{{\mathcal F}(\vartheta_n)}
\sqrt{{\mathcal F}(u_f)}\notag\\
\le &\delta \vartheta_n^2v_{\varepsilon}h_{\varepsilon}^2u_f^2+
\frac{1}{4\delta}v_{\varepsilon}{\mathcal F}(\vartheta_n).
\end{align*}
Moreover, using formula \eqref{repres-formula}, we can estimate
\begin{align*}
(G(t,r)(\vartheta_n^2v_{\varepsilon}^{1-1/p}))(\phi(r))
\le &((G(t,r)(\vartheta_n^2v_{\varepsilon}))(\phi(r)))^{1-\frac{1}{p}}
((G(t,r)\vartheta_n)(\phi(r))^p\\
\le &((G(t,r)(\vartheta_n^2v_{\varepsilon}))(\phi(r)))^{1-\frac{1}{p}}\le
(\Phi_n(r))^{1-\frac{1}{p}}.
\end{align*}
These two estimates replaced in \eqref{999} give
\begin{align}
\frac{d}{dr}\log\Phi_n(r)
\leq & \frac{1}{\Phi_n(r)}\bigg (
G(t,r)\bigg\{\![p\delta^{-1}\!({\mathcal F}(\vartheta_n))(r,\cdot)\!-\!2\vartheta_n\A(r)\vartheta_n]
v_{\varepsilon}(r,\cdot)\!+\!\xi_1^+\vartheta_n^2pv_{\varepsilon}(r,\cdot)\notag\\
& - p\vartheta_n^2v_\varepsilon(r,\cdot)
\Big[((p-1-\delta)(u_f(r,\cdot))^2+\varepsilon)(h_{\varepsilon}(r,\cdot))^2 \notag\\
&\qquad\quad\qquad\quad -|u_f(r,\cdot)|h_{\varepsilon}(r,\cdot)\frac{e^{\sigma_1(t-r)}|x-y|\!+\!\xi_2^+(t-s)}{\sqrt{\kappa_0} (t-s)}\Big]\bigg\}\bigg )(\phi(r))\notag\\
&+\frac{|x-y|}{t-s}e^{\sigma_1(t-r)}\{G(t,r)[|\nabla\vartheta_n|v_\varepsilon(r,\cdot)]
\}(\phi(r))+p.
\label{tabloni}
\end{align}

Straightforward computations show that
${\mathcal A}(r)\vartheta_n$ and $({\mathcal F}(\vartheta_n))(r,\cdot)$ vanish pointwise in $\Rd$ as $n\to +\infty$, for
any $r\in (s,t)$ and there exists a positive constant
$C$ such that $|{\mathcal A}(r)\vartheta_n|+({\mathcal F}(\vartheta_n))(r,\cdot)\le C\varphi$ in $\Rd$ for any $n\in\N$, thanks to Hypothesis \ref{cond-iper}(iii).
By \cite[Lemma 3.4]{KunLorLun09Non} the function $G(t,\cdot)\varphi$ is bounded in $(s,t)\times B_R$ for any $R>0$. Hence, by dominated convergence we conclude that
$G(t,r)(\vartheta_n(\A(r)\vartheta_n)v_{\varepsilon}(r,\cdot))$ vanishes as $n\to +\infty$, pointwise in $\Rd$, for any $r\in (s,t)$ and
\begin{align}
&\|G(t,r)[p\delta^{-1}({\mathcal F}(\vartheta_n))(r,\cdot)-2\vartheta_n\A(r)\vartheta_n]\|_{C_b(B_R)}\notag\\
\le &C_{\delta,p,\|u_f\|_{\infty}}\sup_{r\in (s,t)}\|G(t,r)\varphi\|_{C_b(B_R)},
\label{co'}
\end{align}
where $R>\max\{|x|,|y|\}$. Similarly, the last but one term in \eqref{tabloni} vanishes pointwise
in $\Rd$ as $n\to +\infty$, for any $r\in (s,t)$ and
\begin{align}
|(G(t,r)[|\nabla\vartheta_n|v_\varepsilon(r,\cdot)])(\phi(r))|\le C_{p,\|u_f\|_{\infty}}\sup_{r\in (s,t)}\|G(t,r)\varphi\|_{C_b(B_R)}.
\label{ciafaloni}
\end{align}

Moreover, using the inequality
$\alpha\beta^2-\gamma\beta\geq -\gamma^2/(4\alpha)$ for any $\alpha>0$ and $\beta, \gamma\in\R$,
and that $G(t,s)g_1\leq G(t,s)g_2$ for any $t>s$ and any $g_1\leq g_2$, we deduce
\begin{align*}
\frac{d}{dr}\log(\Phi_n(r))
\leq &\frac{1}{\Phi_n(r)}(G(t,r)[p\delta^{-1}({\mathcal F}(\vartheta_n))(r,\cdot)-2\vartheta_n\A(r)\vartheta_n]
v_{\varepsilon}(r,\cdot))(\phi(r))\notag\\
& +p(1+\xi_1^{+}+e^{2\sigma_1^+(t\!-\!r)}\chi_{\delta})\notag\\
&+\frac{e^{\sigma_1(t-r)}|x-y|}{(t-s)\Phi_n(r)}(G(t,r)[|\nabla\vartheta_n|v_\varepsilon(r,\cdot)])
(\phi(r)),
\end{align*}
where $\chi_{\delta}=(|x-y|+\xi_2^+(t-s))^2(4\kappa_0(t-s)^2(p-1-\delta))^{-1}$.
Integrating both sides of the previous inequality in $(s,t)$ and taking \eqref{co'} and \eqref{ciafaloni} into account to let $n\to +\infty$, we get
\begin{align*}
\log\bigg (\frac{((u_f(t,x))^2+\varepsilon)^{p/2}+\xi_0^p}
{(G(t,s)((f^2+\varepsilon)^{p/2}))(y)+\xi_0^p}\bigg )
\leq & p[(1+\xi_1^+)(t-s)+\Theta(t-s)\chi_{\delta}],
\end{align*}
or even
\begin{align*}
((u_f(t,x))^2+\varepsilon)^{p/2}\le &\exp[p(1+\xi_1^+)(t-s)+p\Theta(t-s)\chi_{\delta}]\notag\\
&\quad\;\,\times[(G(t,s)((f^2+\varepsilon)^{p/2}))(y)+\xi_0^p].
\end{align*}
By \eqref{repres-formula} we can let $\varepsilon$ and $\delta$ tend to zero in both sides
of the previous inequality and this yields the assertion.
\end{proof}

We can now prove the main result of this subsection. For this purpose, we
set $\varphi_{\lambda}(x)=e^{\lambda |x|^2}$ for any $x\in\Rd$ and $\lambda>0$, and
introduce the following additional assumption.

\begin{hypo}
\label{salvadori}
For any $I\ni s<t$ and $\lambda>0$, the function $G(t,s)\varphi_{\lambda}$ belongs to $L^{\infty}(\Rd)$ and, for any $\delta>0$,
$+\infty>M_{\delta,\lambda}:=\sup_{t-s\ge \delta}\|G(t,s)\varphi_{\lambda}\|_{\infty}$.
\end{hypo}

\begin{rmk}
{\rm A sufficient condition for Hypothesis \ref{salvadori} to hold is given in \cite[Theorem 4.3]{AngLorOnI}. More precisely, it holds when $\eqref{point_p_D11}$ holds with $p=1$ and there exists
$K>0$, $\beta,R>1$ such that $\langle b(t,x),x\rangle\le -K|x|^2(\log(|x|))^{\beta}$ for any $t\in I$ and $x\in\Rd\setminus B_R$.}
\end{rmk}
\begin{thm}
Assume that Hypothesis $\ref{salvadori}$ and the conditions in Proposition $\ref{harnack}$ are
satisfied. Then, for any $I\ni s<t$, $f\in L^p(\Rd,\mu_s)$ $(p\in [p_0,+\infty))$ the function
${\mathcal N}(t,s)f$ belongs to $W^{1,\infty}(\Rd)$ and
\begin{align}
&\|{\mathcal N}(t,s)f\|_{\infty}\le c_4(t-s)\|f\|_{L^p(\Rd,\mu_s)}+c_5(t-s)\xi_0,
\label{giovanni}\\[1mm]
&\|\nabla_x{\mathcal N}(t,s)f\|_{\infty}\le c_6(t-s)\|f\|_{L^p(\Rd,\mu_s)}+c_7(t-s)\xi_0
\label{mancarella}
\end{align}
for some continuous functions $c_k:(0,+\infty)\to\R^+$ $(k=4,5,6,7)$ which blow up at zero.
\end{thm}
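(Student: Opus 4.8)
The plan is to combine the Harnack-type estimate of Proposition \ref{harnack} with the ultraboundedness of the \emph{linear} evolution operator $G(t,s)$. Under Hypothesis \ref{salvadori} together with estimate \eqref{point_p_D11} for $p=1$, the latter is available in the form $\|G(t,s)h\|_\infty\le C_G(t-s)\|h\|_{L^1(\Rd,\mu_s)}$ for all $t>s\in I$ and some continuous $C_G:(0,+\infty)\to\R^+$ blowing up at zero (cf.\ \cite[Theorem 4.3]{AngLorOnI} and the results recalled in Section \ref{sect-2}); I would recall this at the outset. After that, by density it suffices to prove \eqref{giovanni}--\eqref{mancarella} for $f\in C_b(\Rd)$: if $(f_n)\subset C_b(\Rd)$ converges to $f$ in $L^p(\Rd,\mu_s)$, then, arguing as in the first part of the proof of Corollary \ref{coro-Lp}, ${\mathcal N}(t,s)f_n\to{\mathcal N}(t,s)f$ in $W^{1,p}(\Rd,\mu_t)$, hence $\mu_t$-a.e.\ along a subsequence together with the gradients, so the pointwise bounds for $f_n$ become essential-sup bounds for $f$ and, in particular, ${\mathcal N}(t,s)f\in W^{1,\infty}(\Rd)$.

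To obtain \eqref{giovanni} I would fix $f\in C_b(\Rd)$, $t>s$ and apply Proposition \ref{harnack} with $y=x$; the term depending on $|x-y|$ in the exponential disappears and one gets
\[
|({\mathcal N}(t,s)f)(x)|^p\le e^{p\Lambda_p(t-s)}\big[(G(t,s)|f|^p)(x)+\xi_0^p\big],\qquad x\in\Rd,
\]
with $\Lambda_p(r):=(1+\xi_1^+)r+\Theta(r)(\xi_2^+)^2[4\kappa_0(p-1)]^{-1}$. Since $(G(t,s)|f|^p)(x)\le\|G(t,s)|f|^p\|_\infty\le C_G(t-s)\|f\|_{L^p(\Rd,\mu_s)}^p$, taking $p$-th roots and using $(a+b)^{1/p}\le a^{1/p}+b^{1/p}$ yields \eqref{giovanni} with, e.g., $c_4(r)=e^{\Lambda_p(r)}(1+C_G(r)^{1/p})$ and $c_5(r)=e^{\Lambda_p(r)}(1+r^{-1/2})$, which are continuous on $(0,+\infty)$ and blow up at zero.

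For \eqref{mancarella} I would exploit the evolution law to gain regularity. Put $\tau:=(s+t)/2$ and $g:={\mathcal N}(\tau,s)f\in C^1_b(\Rd)$, so that $\|g\|_\infty\le c_4((t-s)/2)\|f\|_{L^p(\Rd,\mu_s)}+c_5((t-s)/2)\xi_0$ by \eqref{giovanni}, and let $u:={\mathcal N}(\cdot,\tau)g$ on $[\tau,t]$. Running the maximum-principle argument of Step 1 in the proof of Theorem \ref{prop-3.8} directly with the bound $u\psi\le\xi_0|u|+\xi_1u^2+\xi_2|u||v|$ of Hypothesis \ref{cond-iper}(i) and the Lyapunov-type inequality ${\mathcal A}\widetilde\varphi+\xi_2^+|\nabla\widetilde\varphi|\le a\widetilde\varphi$ of Hypothesis \ref{cond-iper}(ii), one gets $\|u\|_{C_b([\tau,t]\times\Rd)}\le e^{(\xi_1^++a+1)(t-\tau)}(\|g\|_\infty+\xi_0)=:M$. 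Differentiating the mild formula, using a sup-norm gradient estimate $\sqrt{r-\sigma}\,\|\nabla_xG(r,\sigma)h\|_\infty\le c(r-\sigma)\|h\|_\infty$ with $c$ continuous on $[0,+\infty)$ (a consequence of \eqref{point_p_D11} with $p=1$ and the standard smoothing estimates for $G$), the global Lipschitz continuity of $\psi(r,x,\cdot,\cdot)$, the bound $\|\psi(\cdot,\cdot,0,0)\|_\infty\le\xi_0$ (Remark \ref{marsella}(i)) and the splitting $\sqrt{r-\tau}\le\sqrt{r-\sigma}+\sqrt{\sigma-\tau}$, the function $\phi(r):=\sqrt{r-\tau}\,\|\nabla_xu(r,\cdot)\|_\infty$ is seen to satisfy a singular Gronwall inequality $\phi(r)\le c(t-\tau)(\|g\|_\infty+\xi_0+M)+c\int_\tau^r(r-\sigma)^{-1/2}(\sigma-\tau)^{-1/2}\phi(\sigma)\,d\sigma$ on $[\tau,t]$. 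The generalized Gronwall lemma (\cite[Lemma 7.1]{henry}) then gives $\phi(t)\le\widehat c(t-\tau)(\|g\|_\infty+\xi_0)$ with $\widehat c$ continuous, hence $\|\nabla_x{\mathcal N}(t,s)f\|_\infty=\|\nabla_xu(t,\cdot)\|_\infty\le(t-\tau)^{-1/2}\widehat c(t-\tau)(\|g\|_\infty+\xi_0)$; inserting the bound on $\|g\|_\infty$ gives \eqref{mancarella} with continuous $c_6,c_7$ blowing up at zero.

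I expect the second estimate to be the main obstacle: one must establish a sup-norm bound for $\nabla_xu$ on the \emph{whole} interval $[\tau,t]$ — not merely for short times, as in Theorem \ref{exi_cb} — with all constants controlled as continuous functions of $t-s$ that blow up only at $t=s$; and, to keep the dependence on $\xi_0$ linear, the maximum principle has to be run with the sharp bound of Hypothesis \ref{cond-iper}(i) rather than with the quadratic bound of Hypothesis \ref{glob2}. By contrast, once Proposition \ref{harnack} (with $y=x$) and the ultraboundedness of $G$ are in hand, \eqref{giovanni} is essentially immediate.
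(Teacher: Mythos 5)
There is a genuine gap in your derivation of \eqref{giovanni}. The whole argument rests on the claim that the linear evolution operator is ultracontractive from $L^1(\Rd,\mu_s)$ to $L^\infty(\Rd)$, i.e. $\|G(t,s)h\|_\infty\le C_G(t-s)\|h\|_{L^1(\Rd,\mu_s)}$, which you then apply to $h=|f|^p$. This estimate is not among the hypotheses, is not recalled in Section \ref{sect-2}, and does not follow from Hypothesis \ref{salvadori} by any quick argument: the Harnack-type machinery (both the linear analogue of \eqref{cavo} and the results of \cite{AngLorOnI} invoked here only as a sufficient condition for Hypothesis \ref{salvadori}) produces $L^q\to L^\infty$ bounds only for $q>1$, because the exponential factor in \eqref{cavo} degenerates as the exponent tends to $1$; note that even the theorem you are proving is restricted to $p\ge p_0>1$ for the same reason. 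Since $|f|^p$ is merely in $L^1(\Rd,\mu_s)$, your one-line reduction ``$(G(t,s)|f|^p)(x)\le C_G(t-s)\|f\|^p_{L^p(\Rd,\mu_s)}$'' is exactly the unavailable case. The paper instead brings the $L^p$-norm of $f$ in through the invariance property \eqref{inv_pro} and the tightness of $\{\mu_t:\ t\in I\}$: one integrates the Harnack estimate \eqref{cavo} in $y$ over a fixed ball $B_R$ with $\mu_t(B_R)\ge 2^{-p}$, which yields a pointwise bound for $|({\mathcal N}(t,s)f)(x)|$ of the form $\widetilde C(t-s)\,e^{\Lambda(t-s)|x|^2}(\|f\|_{L^p(\Rd,\mu_s)}+\xi_0)$, and only then uses the evolution law (splitting at $(t+s)/2$), \eqref{cavo} with $y=x$, and Hypothesis \ref{salvadori} to absorb the Gaussian weight $\varphi_{p\Lambda}$; in your proposal Hypothesis \ref{salvadori} is never actually used, which is a sign that its content has been smuggled into the unproved linear estimate. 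Your scheme could be repaired by applying \eqref{cavo} with an exponent $p'\in(1,p)$, so that $|f|^{p'}\in L^{p/p'}(\Rd,\mu_s)$ with $p/p'>1$, and then invoking linear ultraboundedness from $L^{p/p'}$ — but establishing that linear estimate from the stated hypotheses requires precisely the invariance--tightness--Gaussian-weight argument above, so nothing is gained over the paper's direct proof.

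Concerning \eqref{mancarella}: your overall move (evolution law with midpoint $\tau=(t+s)/2$, sup bound on ${\mathcal N}(\tau,s)f$ from \eqref{giovanni}, then a gradient bound for ${\mathcal N}(t,\tau)$ in the sup norm) is the paper's, but the maximum-principle plus singular-Gronwall machinery you propose is unnecessary: under Hypothesis \ref{cond-iper}(i) the Lipschitz constant of $\psi(t,x,\cdot,\cdot)$ is global, so Theorem \ref{prop-3.8} applies with $p=+\infty$ and estimate \eqref{ciclismo}, together with $\|\psi(\cdot,\cdot,0,0)\|_\infty\le\xi_0$ from Remark \ref{marsella}(i), already gives $\sqrt{(t-s)/2}\,\|\nabla_x{\mathcal N}(t,s)f\|_\infty\le C_{(t-s)/2}\bigl(\|{\mathcal N}(\tau,s)f\|_\infty+c(t-s)\xi_0\bigr)$ on the whole interval, with the required continuous dependence on $t-s$ and linear dependence on $\xi_0$. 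Re-proving this by hand is not wrong in principle, but your sketch leaves the key quantitative claims (the sup bound $\|u\|_{C_b([\tau,t]\times\Rd)}\le e^{(\xi_1^++a+1)(t-\tau)}(\|g\|_\infty+\xi_0)$ and the constants in the Gronwall step) unverified, whereas the paper simply quotes \eqref{ciclismo}.
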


\begin{proof}
As usually, we prove the assertion for functions in $C^1_b(\Rd)$.

{\em Step 1.} Here, we prove \eqref{giovanni}.
So, let us fix $f\in C^1_b(\Rd)$ and $x\in\Rd$.
By the invariance property of the family $\{\mu_t: t\in I\}$ and inequality \eqref{cavo},
we can estimate
\begin{align*}
&\|f\|_{L^p(\Rd,\mu_s)}^p=\int_{\Rd}(G(t,s)|f|^p)(y)\mu_t(dy)\\
\ge &\int_{B_R}[(G(t,s)|f|^p)(y)+\xi_0^p]\mu_t(dy)-\xi_0^p\\
\ge & |({\mathcal N}(t,s)f)(x)|^pe^{-p\phi(t-s)}\!
\int_{B_R}\!\exp\bigg (\!-p\Theta(t-s)
\frac{(|x-y|+\xi_2^+(t-s))^2}{4\kappa_0(t-s)^2(p-1)}\bigg )\mu_t(dy)-\xi_0^p\\
\ge & |({\mathcal N}(t,s)f)(x)|^pe^{-p\phi(t-s)}\!
\exp\bigg (\!-p\Theta(t-s)\frac{(|x|+R+\xi_2^+(t-s))^2}{4\kappa_0(t-s)^2(p-1)}\bigg )\mu_t(B_R)
-\xi_0^p,
\end{align*}
where $\phi=1+\xi_1^+$.
By the tightness of the family $\{\mu_t: t\in I\}$ we can fix
$R>0$ such that $\mu_t(B_R)\ge 2^{-p}$ for any $t\ge s$ and, from the previous chain of inequalities, we conclude that
\begin{align}
|({\mathcal N}(t,s)f)(x)|^p\le 2^p(\widetilde C(t-s))^p\varphi_{p\Lambda(t-s)}(x)
(\|f\|_{L^p(\Rd,\mu_s)}^p+\xi_0^p),
\label{harnack-1}
\end{align}
where
\begin{align*}
\Lambda(r)=\exp\bigg (\frac{\Theta(r)}{2\kappa_0r^2(p-1)}\bigg ),\qquad\;\,
\widetilde C(r)=\exp\bigg (\phi r+\Theta(r)\frac{(\xi_2^+r+R)^2}{2\kappa_0r^2(p-1)}\bigg ).
\end{align*}

Now, using the evolution law  and again \eqref{cavo}, we can write
\begin{align}
|({\mathcal N}(t,s)f)(x)|^p=& |({\mathcal N}(t,(t+s)/2){\mathcal N}((t+s)/2,s))(x)|^p\notag\\
\le & [(G(t,(t+s)/2)|{\mathcal N}((t+s)/2,s)f|^p)(y)+\xi_0^p]\notag\\
&\qquad\times
\exp\bigg (p\phi\frac{t-s}{2}+p\Theta\bigg (\frac{t-s}{2}\bigg )
\frac{(2|x-y|+\xi_2^+(t-s))^2}{4\kappa_0(t-s)^2(p-1)}\bigg )
\label{lara}
\end{align}
for any $y\in\Rd$. From \eqref{repres-formula} and \eqref{harnack-1} we obtain
\begin{align}
&(G(t,(t+s)/2)|{\mathcal N}((t+s)/2,s)f|^p)(y)\notag\\
\le &
2^p(\widetilde C((t-s)/2))^p(\|f\|_{L^p(\Rd,\mu_s)}^p+\xi_0^p)
(G(t,(t+s)/2)\varphi_{p\Lambda(t-s)/2})(y)\notag\\
\le &
2^p(\widetilde C((t-s)/2))^p(\|f\|_{L^p(\Rd,\mu_s)}^p+\xi_0^p)
M_{\frac{t-s}{2},p\Lambda(t-s)/2}.
\label{laretta}
\end{align}
From \eqref{lara}, \eqref{laretta}, choosing $y=x$ in the exponential term, we get
\begin{align*}
|({\mathcal N}(t,s)f)(x)|\le &
[2\widetilde C((t-s)/2)(\|f\|_{L^p(\Rd,\mu_s)}+\xi_0)
M_{\frac{t-s}{2},p\Lambda(t-s)/2}^{1/p}+\xi_0]\notag\\
&\qquad\times
\exp\bigg (\phi\frac{t-s}{2}+\Theta\bigg (\frac{t-s}{2}\bigg )
\frac{(\xi_2^+)^2}{4\kappa_0(p-1)}\bigg )
\end{align*}
and \eqref{giovanni} follows with
\begin{align*}
c_4(r)=&2\widetilde
C(r/2)M_{r/2,p\Lambda r/2}^{1/p}
\exp[(1+\xi_1^+)r/2+\Theta(r/2)(\xi_2^+)^2(4\kappa_0(p-1))^{-1}],\\
c_5(r)=&(2\widetilde
C(r/2)M_{r/2,p\Lambda r/2}^{1/p}+1)\exp[(1+\xi_1^+)r/2+\Theta(r/2)(\xi_2^+)^2(4\kappa_0(p-1))^{-1}],
\end{align*}

{\em Step 2.} We fix $t>s$, $f\in C^1_b(\Rd)$. By Theorem
\ref{exi_cb} ${\mathcal N}(t,s)f\in C^1_b(\Rd)$ and, by Step 1,
$\|{\mathcal N}((t+s)/2,s)f\|_{\infty}\le c_4((t-s)/2)\|f\|_{L^p(\Rd,\mu_s)}+c_5((t-s)/2)\xi_0$.
Hence, from \eqref{ciclismo} we get
\begin{align*}
&\sqrt{\frac{t-s}{2}}\|\nabla_x{\mathcal N}(t,s)f\|_{\infty}\\
\le&\widetilde C_{(t-s)/2}\bigg [c_4((t-s)/2)\|f\|_{L^p(\Rd,\mu_s)}+c_5((t-s)/2)\xi_0
+\frac{t-s}{2}\xi_0+\xi_0\bigg ].
\end{align*}
Taking $T=t$, estimate \eqref{mancarella} follows
with $c_6(r)=\sqrt{2}r^{-1/2}\widetilde C_{r/2}c_4(r/2)$ and $c_7(r)=\widetilde C_{r/2}[c_5(r/2)\sqrt{2/r}+\sqrt{r/2}+\sqrt{2/r}]$.
\end{proof}

\section{Stability of the null solution}

In this section we study the stability of the null solution to problem \eqref{n-sm-pb} both
in the $C_b$- and $L^p$-settings. For this reason, we assume that $\psi(\cdot,\cdot,0,0)=0$.

\begin{thm}
The following properties are satisfied.
\begin{enumerate}[\rm (i)]
\item
Let Hypotheses $\ref{base}$, $\ref{cond-iper}(i)$-$(iii)$. Further, suppose that
the constant $\omega_p=\xi_1+(\xi_2^+)^2(4\kappa_0(p-1))^{-1}$ is negative, where $\xi_1$
and $\xi_2$ are defined in Hypothesis $\ref{cond-iper}(ii)$.
 Then, for any $p\ge p_0$, there exists a positive constant $K_p$
 such that, for any $s\in I$, $f\in L^p(\Rd,\mu_s)$ and $j=0,1$,
\begin{align}
&\|D_x^j{\mathcal N}(t,s)f\|_{L^p(\Rd,\mu_t)}\leq K_p^je^{\omega_p(t-s)}\|f\|_{L^p(\Rd,\mu_s)},
\qquad\;\,t>s+j.
\label{cartellino}
\end{align}
\item
Suppose that the assumptions of Theorem $\ref{prop-3.8}$ are satisfied. Further, assume
that Hypotheses  $\ref{cond-iper}(i)$-$(iii)$ hold with $\xi_1<0$.
Then, \eqref{cartellino} holds true for any $f\in C_b(\Rd)$ with $p=+\infty$ and $\omega_p$ and $K_p$ being replaced, respectively, by $\xi_1$
and $C_1e^{-\xi_1}$.
\end{enumerate}
\end{thm}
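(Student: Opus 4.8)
The plan is to deduce both statements from a single estimate in $L^p(\Rd,\mu_t)$, valid for every $p\ge p_0$ and every $f\in C^1_b(\Rd)$:
\begin{equation}
\|\mathcal N(t,s)f\|_{L^p(\Rd,\mu_t)}\le e^{\omega_p(t-s)}\|f\|_{L^p(\Rd,\mu_s)},\qquad t>s,
\label{plan-energy}
\end{equation}
with $\omega_p=\xi_1+(\xi_2^+)^2(4\kappa_0(p-1))^{-1}$, the sign of $\omega_p$ being irrelevant at this stage. To prove \eqref{plan-energy} I would repeat Step~1 of the proof of Theorem \ref{vacanza}, keeping the notation therein ($u=\mathcal N(\cdot,s)f$, which is a classical solution of \eqref{n-sm-pb} under Hypothesis \ref{cond-iper}(i); the cut-off/regularization $v_{n,\varepsilon}$, $\beta_{n,\varepsilon}$; the functions $\vartheta_n$, $\varphi_{1,n}$, $\varphi_{2,n}$), with the single modification that the time-dependent exponent $p_\gamma(t)$ is replaced by the constant $p$. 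Then $p_\gamma'(t)\equiv0$, so all the terms containing a logarithm vanish and the logarithmic Sobolev inequality \eqref{LSI} is not used; only Hypotheses \ref{base} and \ref{cond-iper}(i)--(iii) are invoked, exactly as assumed. Differentiating $\beta_{n,\varepsilon}$ via Lemma \ref{derivative} and using $u\psi_u\le\xi_1u^2+\xi_2^+|u||\nabla_xu|$ (Hypothesis \ref{cond-iper}(i) with $\xi_0=0$) together with $|\nabla_xu|\le\kappa_0^{-1/2}\mathcal F(u)^{1/2}$, one is left with three contributions: the dissipative term proportional to $-p(p-1)\int_{\Rd}\vartheta_n^4u^2v_{n,\varepsilon}^{p-4}\mathcal F(u)\,d\mu_t$; the cross term coming from $\xi_2^+$, which is absorbed into the dissipative one by Young's inequality with the optimal constant (this is precisely where the factor $4$ in $\omega_p$, and hence the improvement over the $\gamma\to+\infty$ limit of $\omega_{p,\gamma}$ in Theorem \ref{vacanza}, appears, because here the full dissipation is available and $\delta_1$ can be chosen optimally); and the cut-off remainders, which are $O(\|\varphi_{1,n}+\varphi_{2,n}\|_{L^1(\Rd,\mu_t)})$ and vanish as $n\to+\infty$ by Hypothesis \ref{cond-iper}(iii) and $\sup_{t\in I}\|\varphi\|_{L^1(\Rd,\mu_t)}<+\infty$. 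This gives $\beta_{n,\varepsilon}'(t)\le\omega_p\beta_{n,\varepsilon}(t)+o(1)$; integrating on $(s,t)$, letting $n\to+\infty$ and $\varepsilon\to0^+$ by dominated convergence, and applying Gronwall's lemma yields \eqref{plan-energy}. The continuity of $\mathcal N(t,s)\colon L^p(\Rd,\mu_s)\to W^{1,p}(\Rd,\mu_t)$ (see \eqref{dip-p-weak} and Section \ref{4}) together with the density of $C^1_b(\Rd)$ then extends \eqref{plan-energy} to all $f\in L^p(\Rd,\mu_s)$, which is \eqref{cartellino} for $j=0$ as soon as $\omega_p<0$.

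For $j=1$ in part (i) I would combine \eqref{plan-energy} with the smoothing estimate \eqref{ciclismo}. Fix $t>s+1$ and use the evolution law $\mathcal N(t,s)f=\mathcal N(t,t-1)\mathcal N(t-1,s)f$. Applying \eqref{ciclismo} on the interval $[t-1,t]$, where $\psi(\cdot,\cdot,0,0)=0$ so that the inhomogeneous term drops, and writing $C_1$ for the constant $C_{\tau}$ of Theorem \ref{prop-3.8} at $\tau=1$, we get $\|\nabla_x\mathcal N(t,s)f\|_{L^p(\Rd,\mu_t)}\le C_1\|\mathcal N(t-1,s)f\|_{L^p(\Rd,\mu_{t-1})}\le C_1e^{-\omega_p}e^{\omega_p(t-s)}\|f\|_{L^p(\Rd,\mu_s)}$, that is \eqref{cartellino} with $K_p:=C_1e^{-\omega_p}$ (which is $>1$ since $\omega_p<0$).

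Part (ii) is obtained by letting $p\to+\infty$. Under the present hypotheses (which contain those of Theorem \ref{prop-3.8}), for $f\in C_b(\Rd)$ the $C_b$-solution $\mathcal N(t,s)f$ is classical, globally defined, and coincides with its $L^p$-realization for every $p\ge p_0$ (Corollary \ref{coro-Lp}); moreover $f\in L^p(\Rd,\mu_s)$ with $\|f\|_{L^p(\Rd,\mu_s)}\le\|f\|_\infty$ because $\mu_s$ is a probability measure. Hence \eqref{plan-energy} gives $\|\mathcal N(t,s)f\|_{L^p(\Rd,\mu_t)}\le e^{\omega_p(t-s)}\|f\|_\infty$ for every $p\ge p_0$. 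Since $\omega_p\to\xi_1$ as $p\to+\infty$, and since $\|\mathcal N(t,s)f\|_{L^p(\Rd,\mu_t)}$ increases to $\|\mathcal N(t,s)f\|_\infty$ as $p\to+\infty$ (because $\mathcal N(t,s)f\in C_b(\Rd)$ and $\mu_t$ is equivalent to the Lebesgue measure), we obtain $\|\mathcal N(t,s)f\|_\infty\le e^{\xi_1(t-s)}\|f\|_\infty$, which is \eqref{cartellino} for $j=0$, $p=+\infty$ and rate $\xi_1$. For $j=1$ one argues as in the previous paragraph, now using \eqref{ciclismo} with $p=+\infty$ on $[t-1,t]$: $\|\nabla_x\mathcal N(t,s)f\|_\infty\le C_1\|\mathcal N(t-1,s)f\|_\infty\le C_1e^{-\xi_1}e^{\xi_1(t-s)}\|f\|_\infty$, which is \eqref{cartellino} with the stated constant $C_1e^{-\xi_1}$.

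The only laborious point is the proof of \eqref{plan-energy}, that is, the differentiability of $\beta_{n,\varepsilon}$ and the several passages to the limit in $n$ and $\varepsilon$; this, however, is a simplified version of Step~1 of the proof of Theorem \ref{vacanza}, the simplification coming precisely from the constancy of the exponent (no logarithmic terms, no logarithmic Sobolev inequality). The other point requiring care, namely the consistency of the $C_b$- and $L^p$-theories used in part (ii), is already ensured by Corollary \ref{coro-Lp} and by the construction of $\mathcal N(t,s)$ on $L^p(\Rd,\mu_s)$ in Section \ref{4}.
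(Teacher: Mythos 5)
Your proposal is correct and follows essentially the same route as the paper: the $L^p$ bound is obtained by rerunning Step 1 of Theorem \ref{vacanza} with the constant exponent $p(t)\equiv p$ (so the logarithmic terms and \eqref{LSI} drop out and the optimal Young parameter yields $\omega_p$), the gradient bound for $t>s+1$ follows from the evolution law together with \eqref{ciclismo} giving $K_p=C_1e^{-\omega_p}$, and part (ii) is the limit $p\to+\infty$. Your extra details (density of $C^1_b(\Rd)$, monotone convergence of $L^p(\Rd,\mu_t)$-norms to the sup-norm via equivalence of $\mu_t$ with the Lebesgue measure, and $\omega_p\to\xi_1$) are exactly what the paper leaves implicit.
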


\begin{proof}
(i) Estimate \eqref{cartellino} can be obtained
arguing as in the proof of Theorem \ref{vacanza}, where now $p(t)=p$ for any $t\ge s$.
As far as the gradient of ${\mathcal N}(t,s)f$ is concerned, we fix $t>s+1$ and observe that
${\mathcal N}(t,s)f={\mathcal N}(t,t-1){\mathcal N}(t-1,s)f$.
Hence, from \eqref{ciclismo} we obtain
\begin{eqnarray*}
\|\nabla_x{\mathcal N}(t,s)f\|_{L^p(\Rd,\mu_t)}\le C_1\|{\mathcal N}(t-1,s)f\|_{L^p(\Rd,\mu_{t-1})}
\le K_pe^{\omega_p(t-s)}\|f\|_{L^p(\Rd,\mu_s)},
\end{eqnarray*}
where $K_p=C_1e^{-\omega_p}$.

(ii) The assertion follows easily letting $p$ tend to $+\infty$ in \eqref{cartellino}.
\end{proof}

\appendix

\section{Technical results}
\begin{prop}\label{smoth_v}
Let Hypotheses $\ref{base}$ hold and let $g\in C((a,b]\times \Rd)$ satisfy
$[g]_{\gamma,\infty}:=\sup_{r\in (a,b)}(r-a)^{\gamma}\|g(r,\cdot)\|_\infty<+\infty$ for some $\gamma\in [0,1)$ and some $I\ni a<b$. Then, the function $z:[a,b]\times\Rd\to\R$, defined by
\begin{equation*}
z(t,x):=\int_a^t (G(t,r)g(r, \cdot))(x)dr, \qquad t \in [a,b], \, x \in \Rd,
\label{x-men}
\end{equation*}
belongs to $C_b([a,b]\times \Rd)\cap C^{0,1+\theta}((a,b]\times \Rd)$ for any $\theta\in (0,1)$,
\begin{equation}
\|z\|_{\infty}\le \frac{(b-a)^{1-\gamma}}{1-\gamma}[g]_{\gamma,\infty},\qquad\;\,\|\nabla_xz(t,\cdot)\|_{\infty}\le c_{\gamma,a,b}(t-a)^{\frac{1}{2}-\gamma}[g]_{\gamma,\infty}
\label{leopolda}
\end{equation}
and
\begin{equation}
\|\nabla_x z(t,\cdot)\|_{C^{\theta}(B_R)}\le C_R[g]_{\gamma,\infty}(t-a)^{\frac{1-2\gamma-\theta}{2}},
\label{leopolda-1}
\end{equation}
for any $t\in (a,b]$, $R>0$ and some positive constants $c_{\gamma,a,b}$ and $C_R$.
In particular, if $\gamma\le 1/2$, then $\nabla_xz$ is bounded in $(a,b]\times\Rd$.

Finally, if $[g]_{\gamma,\theta,R}:=\sup_{t\in (a,b]}(t-a)^{\gamma}\|g(t,\cdot)\|_{C^{\theta}_b(B_R)}<+\infty$, for some $\theta\in (0,1)$ and any $R>0$, then
$z\in C^{0,2+\theta}_{\rm loc}((a,b]\times\Rd)\cap C^{1,2}((a,b]\times\Rd)$. Moreover,
\begin{align}
\|z(t,\cdot)\|_{C_b^{2}(B_R)}\le c(t-a)^{\frac{\theta}{2}-\gamma}[g]_{\gamma,\theta,R+1},\qquad\;\,t\in (a,b].
\label{rai-sport}
\end{align}
and, if $\theta>\alpha$, then
\begin{align}
\|z(t,\cdot)\|_{C_b^{2+\rho}(B_R)}\le c(t-a)^{\frac{\theta-\rho}{2}-\gamma}[g]_{\gamma,\theta,R+1},\qquad\;\,t\in (a,b],
\label{rai-sport-1}
\end{align}
where $\rho=\alpha$ if $\theta>\alpha$, whereas $\rho$ can be arbitrarily fixed in $(0,\theta)$ otherwise
\end{prop}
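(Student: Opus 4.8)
The plan is to reduce every assertion to the two basic properties of the evolution operator recalled in Section \ref{sect-2} — the sup-norm contractivity \eqref{norm_g} and the pointwise gradient estimates \eqref{point_p_D11}, \eqref{point_p_D} (the latter with a constant independent of $p$) — together with the interior (local) Schauder estimates for $G(t,s)$, and then to carry out the elementary integrations in the Duhamel formula defining $z$. Since $\|G(t,r)g(r,\cdot)\|_\infty\le\|g(r,\cdot)\|_\infty\le (r-a)^{-\gamma}[g]_{\gamma,\infty}$, integration over $(a,t)$ gives at once the first inequality in \eqref{leopolda} and, letting $t\to a^+$, that $z$ extends continuously to $t=a$ with $z(a,\cdot)\equiv0$. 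Taking the $p$-th root in \eqref{point_p_D} yields $\|\nabla_xG(t,r)h\|_\infty\le c(1+(t-r)^{-1/2})\|h\|_\infty$ with $c$ depending only on $b-a$; this justifies differentiating under the integral sign in the definition of $z$ and, after computing the beta-type integral $\int_a^t(1+(t-r)^{-1/2})(r-a)^{-\gamma}\,dr$, gives the second inequality in \eqref{leopolda}, hence the boundedness of $\nabla_xz$ on $(a,b]\times\Rd$ when $\gamma\le1/2$.

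For the Hölder bound I would use the interior parabolic estimates $\|D^2_xG(t,r)h\|_{C_b(B_{R+1})}\le C_R(t-r)^{-1}\|h\|_\infty$ and $\|\nabla_xG(t,r)h\|_\infty\le c(t-r)^{-1/2}\|h\|_\infty$ (both for $0<t-r\le b-a$), which, combined with the elementary interpolation $[v]_{C^\theta(B_R)}\le2\|\nabla v\|_{L^\infty(B_{R+1})}^\theta\|v\|_\infty^{1-\theta}$, produce $[\nabla_xG(t,r)h]_{C^\theta(B_R)}\le C_R(t-r)^{-(1+\theta)/2}\|h\|_\infty$ for every $\theta\in(0,1)$. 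Inserting this in the Duhamel formula and using $\int_a^t(t-r)^{-(1+\theta)/2}(r-a)^{-\gamma}\,dr=B(\tfrac{1-\theta}{2},1-\gamma)(t-a)^{(1-2\gamma-\theta)/2}$, together with the sup bound from \eqref{leopolda}, gives \eqref{leopolda-1}. The membership $z\in C_b([a,b]\times\Rd)\cap C^{0,1+\theta}((a,b]\times\Rd)$ then follows from these uniform bounds, the joint continuity of $(t,r,x)\mapsto\nabla_x[G(t,r)g(r,\cdot)](x)$ (cf. Section \ref{sect-2} and \cite{KunLorLun09Non}) and dominated convergence, treating the endpoint $r=t$ of the integration separately in the standard way.

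For the last part the difficulty is that $D^2_x[G(t,r)g(r,\cdot)]$ is only $O((t-r)^{-1})$ in $C_b(B_R)$, which is not integrable in $r$; the gain comes from the Hölder continuity of $g(r,\cdot)$ combined with the Markov property $G(t,r)\one=\one$ (so $D^2_x[G(t,r)\one]\equiv0$), which upgrades the interior estimate to $\|D^2_x[G(t,r)h]\|_{C_b(B_R)}\le C_R(t-r)^{-1+\theta/2}\|h\|_{C^\theta_b(B_{R+1})}$ and, similarly, $[D^2_x[G(t,r)h]]_{C^\rho(B_R)}\le C_R(t-r)^{-1+(\theta-\rho)/2}\|h\|_{C^\theta_b(B_{R+1})}$ for $\rho<\theta$, with $\rho$ further limited to $\rho\le\alpha$ by the regularity of the coefficients. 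Applying these with $h=g(r,\cdot)$, $\|g(r,\cdot)\|_{C^\theta_b(B_{R+1})}\le(r-a)^{-\gamma}[g]_{\gamma,\theta,R+1}$, and computing $\int_a^t(t-r)^{-1+\theta/2}(r-a)^{-\gamma}\,dr=B(\tfrac{\theta}{2},1-\gamma)(t-a)^{\theta/2-\gamma}$ (and the analogous integral with $(\theta-\rho)/2$ in place of $\theta/2$) yields \eqref{rai-sport} and \eqref{rai-sport-1}. Finally, differentiating the Duhamel formula shows that $z$ solves $D_tz=\A z+g$ pointwise, which — once $z(t,\cdot)$ is known to be $C^2$ locally and the coefficients and $g$ are continuous — upgrades $z$ to the regularity $C^{1,2}((a,b]\times\Rd)\cap C^{0,2+\theta}_{\rm loc}((a,b]\times\Rd)$ claimed in the statement.

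The only genuinely non-routine inputs are the interior Schauder-type estimates for $G(t,s)$ with the sharp time-blow-up exponents — and in particular the upgraded second-derivative bound that exploits $G(t,r)\one=\one$ — which must be quoted from, or adapted along the lines of, \cite{KunLorLun09Non}; all the remaining work is bookkeeping of beta integrals and routine continuity arguments.
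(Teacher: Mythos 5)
Your proposal is correct and follows essentially the same route as the paper's proof: both reduce everything to the sup-norm contraction \eqref{norm_g}, the gradient estimate \eqref{point_p_D} and the interior estimates \eqref{28AL} quoted from \cite{AL} (your interpolation between the first- and second-derivative bounds, and your ``$G(t,r)\one=\one$'' heuristic for the H\"older-data case, just re-derive or motivate those quoted bounds), and then carry out the same beta-type integrations in the Duhamel formula, including a truncation/limit argument identifying $D_tz=\A z+g$. The only points the paper treats more carefully than your sketch are the measurability in $r$ of $r\mapsto (G(t,r)g(r,\cdot))(x)$ (established by approximating $g$ with bounded, uniformly continuous functions and using \cite{KunLorLun09Non}, since $g$ is only continuous on $(a,b]\times\Rd$ and joint continuity of the integrand is not immediate) and the continuity in $t$ of $\nabla_x z$ and $D^2_xz$ (obtained via interpolation inequalities); these are routine repairs that leave your structure intact.
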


\begin{proof}
Throughout the proof, we will make use of \cite[Proposition 2.7]{AL}, where it has been shown that, for any $I\ni a<b$,
$R>0$, $\eta\in (0,1]$ and $\beta\in [\eta,2+\alpha]$ there exists positive constants $C_{\beta}=C_{\beta}(a,b,R)$ and $C_{\eta,\beta}=C_{\eta,\beta}(a,b,R)$ such that for any $f\in C_b(\Rd)\cap C^{\eta}_{\rm loc}(\Rd)$
\begin{equation}
\label{28AL}
\|G(t,s)f\|_{C^{\beta}(\overline{B}_R)} \leq \left\{
\begin{array}{ll}
C_{\beta}(t-s)^{-\frac{\beta}{2}}\|f\|_{\infty},\\[2mm]
C_{\eta,\beta}(t-s)^{-\frac{\beta-\eta}{2}}\|f\|_{C^{\eta}(\overline{B}_{R+1})},
\end{array}
\right.
\qquad\;\, a\leq s<t\leq b.
\end{equation}

To begin with, we observe that, for any $t\in (a,b]$ and $x\in\Rd$, the function $r\mapsto (G(t,r)g(r,\cdot))(x)$ is measurable in $(a,t]$.
If $g$ is bounded and uniformly continuous in $\R^{d+1}$ this is clear. Indeed, as it has been recalled in Section \ref{sect-2}, the function $(t,s,x)\mapsto (G(t,s)f)(x)$ is continuous in $\{(t,s,x)\in I\times I\times\Rd: t\ge s\}$
for any $f\in C_b(\Rd)$. Hence, taking \eqref{norm_g} into account and adding and subtracting $(G(t,r)g(r_0,\cdot))(x)$,
we can estimate
\begin{align*}
&|(G(t,r)g(r,\cdot))(x)-(G(t,r_0)g(r_0,\cdot))(x_0)|\\
\le & \|g(r,\cdot)-g(r_0,\cdot)\|_{\infty}+|(G(t,r)g(r_0,\cdot))(x)-(G(t,r_0)g(r_0,\cdot))(x_0)|
\end{align*}
for any $(r,x), (r_0,x_0)\in [a,t]\times\Rd$, and the last side of the previous chain of inequalities vanishes as $(r,x)$ tends to $(r_0,x_0)$.

If the function $g$ is as in the statement of the proposition, we can approximate it by a sequence $(g_n)$ of bounded and uniformly continuous functions in $\R^{d+1}$ which converge to $g$ pointwise in $(a,b)\times\Rd$
and satisfy $\|g_n(r,\cdot)\|\le \|g(r,\cdot)\|_{\infty}$ for any $r\in (a,b)$.\footnote{This can be done, for instance, setting $g_n(t,x)=\vartheta_n(t)(\overline g(t,\cdot)\star \rho_n)(x)$ for any $(t,x)\in\R^{d+1}$ and $n\in\N$, where
$\overline g:(a,+\infty)\times\Rd\to\R$ equals $g$ in $(a,b)\times\Rd$ and $\overline g(t,\cdot)=g(b,\cdot)$ for any $t>b$,
$(\vartheta_n)\subset C^{\infty}(\R)$ is a sequence of smooth functions such that $\one_{[a+2/n,+\infty)}\le\vartheta_n\le \one_{[a+1/n,+\infty)}$ for any $n\in\N$
and ``$\star$'' denotes convolution with respect to the spatial variables.}
Since the sequence $(g_n)$ is bounded and pointwise converges to $g$ in $(a,t]\times\Rd$, by \cite[Proposition 3.1(i)]{KunLorLun09Non}
$(G(t,\cdot)g_n(r,\cdot))(x)$ converges to $(G(t,\cdot)g(r,\cdot))(x)$ as $n\to +\infty$ pointwise in $(a,t]$.
Hence, the function $r\mapsto (G(t,r)g(r,\cdot))(x)$ is measurable in $(a,t]$.

Using again \eqref{norm_g} we obtain
$\|G(t,r)g(r,\cdot)\|_{\infty}\le \|g(r,\cdot)\|_{\infty}\le (r-a)^{-\gamma}[g]_{\gamma,\infty}$ for any $r\in (a,t]$. It thus follows that $z$ is bounded and
the first estimate in \eqref{leopolda} follows.

Proving that $z$ is continuous in $[a,b]\times\Rd$ is an easy task, based on estimate \eqref{norm_g} and the dominated convergence theorem. Hence, the details are omitted.

Fix $\theta\in (0,1)$. The first estimate in \eqref{28AL} with $\beta=1+\theta$ and the assumptions on $g$ allow to differentiate
$z$ with respect to $x_j$ ($j=1,\ldots,d$), under the integral sign, and obtain that $D_j z(t,\cdot)$ is locally $\theta$-H\"older continuous in $\Rd$, uniformly with respect to $t \in (a,b)$, and
\begin{equation}
\|D_jz(t,\cdot)\|_{C^{\theta}(B_R)}\le C_R[g]_{\gamma,\infty}(t-a)^{\frac{1-2\gamma-\theta}{2}},\qquad\;\,t\in (a,b].
\label{sgura}
\end{equation}
To conclude that $D_j z$ is continuous in $(a,b]\times \Rd$, it suffices to prove that, for any $x\in\Rd$, the function $D_jz(\cdot,x)$ is continuous in $(a,b]$.
For this purpose, we apply an interpolation argument. We fix $R>0$ such that $x\in \overline{B}_R$. Applying the well-known interpolation estimate
$\|f\|_{C^1(\overline{B}_R)}\le K\|f\|_{C(\overline{B}_R)}^{\theta/(1+\theta)}\|f\|_{C^{1+\theta}(B_R)}^{1/(1+\theta)}$
with $f=z(t,\cdot)-z(t_0,\cdot)$ and $t,t_0\in (a,b]$, from the continuity of $z$ in $[a,b]\times\Rd$ and the local boundedness in $(a,b]$ of the function $t\mapsto\|f(t,\cdot)\|_{C^{1+\theta}(B_R)}$,
we conclude that the function $D_jz(\cdot,x)$ is continuous in $(a,b]$. Hence, $z\in C^{0,1+\theta}_{\rm loc}((a,b]\times\Rd)$. Estimate \eqref{leopolda-1} follows from \eqref{sgura}.
Further, estimate \eqref{point_p_D} and the assumption on $g$ imply that
\begin{align*}
|D_jz(t,x)|\le C_0[g]_{\gamma,\infty}\int_a^t(r-a)^{-\gamma}(1+(t-r)^{-1/2})dr=C'_{\gamma,a,b}(t-a)^{\frac{1}{2}-\gamma}[g]_{\gamma,\infty}
\end{align*}
for any $(t,x)\in (a,b]\times\Rd$, whence the second estimate in \eqref{leopolda} follows at once.

Let us now assume that $\sup_{t\in (a,b)}(t-a)^{\gamma}\|g(t,\cdot)\|_{C^{\theta}_b(B_R)}<+\infty$ for any $R>0$. Arguing as above and
taking the second estimate in \eqref{28AL} with $\beta=2$ (resp. $\beta=2+\alpha$) into account,  we can show that $z(t,\cdot)\in C^2_{\rm loc}(\Rd)$ (resp.
$z(t,\cdot)\in C^{2+\alpha}_{\rm loc}(\Rd)$) for any $t \in (a,b]$ and
\eqref{rai-sport} (resp. \eqref{rai-sport-1}) holds true. Applying the interpolation  inequality
$\| \varphi\|_{C^2(\overline{B}_R)} \leq C \|\varphi\|_{\infty}^{\theta/(2+\theta)} \| \varphi\|_{C^{2+\theta} (\overline{B}_R)}^{2/(2+\theta)}$
with $\varphi=z(t,\cdot)-z(t_0,\cdot)$ we deduce that the second-order spatial derivatives of $z$ are continuous in $(a,b]\times B_R$ and, hence, in $(a,b]\times\Rd$ due to the arbitrariness of $R>0$.

Finally, to prove the differentiability of $z$, we introduce the sequence $(z_n)$, where
\begin{eqnarray*}
z_n(t,x)=\int_a^{t-\frac{1}{n}}(G(t,r)g(r, \cdot))(x)dr, \qquad t \in [a+1/n,b],\;\, x \in \Rd,\;\,n\in\N.
\end{eqnarray*}
As it is immediately seen $z_n$ converges to $z$, locally uniformly in $(a,b]\times\Rd$ and each function $z_n$ is differentiable in $[a+1/n,b]\times\Rd$ with respect to $t$ and
\begin{eqnarray*}
D_tz_n(t,x)=\int_a^{t-\frac{1}{n}}(\A(t)G(t,r)g(r, \cdot))(x)dr+(G(t,t-1/n)g(t-1/n,\cdot))(x)
\end{eqnarray*}
for such values of $(t,x)$. Since $\|\A(t)G(t,r)g(r, \cdot)\|_{C_b(B_R)}\le C_R[g]_{\gamma,\infty}(t-r)^{\theta/2-\gamma}(r-a)^{-\gamma}$ for any  $r\in (a,t)$, and $g(t-1/n,\cdot)$ converges to
$g(t,\cdot)$ locally uniformly in $\Rd$, by \cite[Proposition 3.6]{KunLorLun09Non} and the dominated convergence theorem, we conclude that
$D_tz_n$ converges locally uniformly in $(a,b]\times\Rd$ to $\A z+g$. Thus, we conclude that $z$ is continuously differentiable
in $(a,b]\times\Rd$ and, therein, $D_tz=\A z+g$.
\end{proof}

\begin{lemm}
\label{lemm-brigida}
Let $J$ be an interval and let $g\in C(J\times\Rd)$ be such that $g(t,\cdot)$ is bounded in $\Rd$ for any $t\in J$.
Then, the function $t\mapsto \|g(t,\cdot)\|_{\infty}$ is measurable in $J$.
\end{lemm}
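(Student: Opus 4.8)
The plan is to exploit the separability of $\Rd$ in order to rewrite the (a priori uncountable) supremum defining $\|g(t,\cdot)\|_\infty$ as a countable supremum of measurable functions of the single variable $t$. Concretely, I would fix a countable dense subset $D\subset\Rd$, for instance $D=\mathbb Q^d$, and first observe that, since $g(t,\cdot)\in C(\Rd)$ for every $t\in J$, the continuity of $|g(t,\cdot)|$ gives
\[
\|g(t,\cdot)\|_\infty=\sup_{x\in\Rd}|g(t,x)|=\sup_{x\in D}|g(t,x)|,\qquad\;\,t\in J,
\]
because every point of $\Rd$ is a limit of points of $D$.

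Next I would use the \emph{joint} continuity of $g$ on $J\times\Rd$. For each fixed $x\in D$, the map $t\mapsto |g(t,x)|$ is the composition of the continuous map $t\mapsto (t,x)$, the continuous function $g$, and the absolute value; hence it is continuous, and in particular Borel measurable, on $J$. Combining this with the previous displayed identity, the function $t\mapsto\|g(t,\cdot)\|_\infty$ is the pointwise supremum, over the countable index set $D$, of the measurable functions $t\mapsto |g(t,x)|$. Since a countable supremum of measurable functions is measurable — and, by the hypothesis that $g(t,\cdot)$ is bounded for every $t$, this supremum is finite at each $t\in J$, so that the resulting function is real-valued — the measurability of $t\mapsto\|g(t,\cdot)\|_\infty$ follows at once.

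The argument is entirely elementary and I do not expect any genuine obstacle. The only points that require a little care are that it is the joint continuity of $g$ (rather than mere separate continuity) that makes $t\mapsto|g(t,x)|$ continuous, and that the reduction to a countable supremum in the first step relies on the continuity of $g(t,\cdot)$ in the space variable; both are guaranteed by the assumption $g\in C(J\times\Rd)$.
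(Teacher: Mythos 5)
Your argument is correct, but it follows a different (and somewhat more direct) route than the paper. You reduce the uncountable supremum to a countable one by means of a countable dense set $D=\mathbb{Q}^d$, using the continuity of $g(t,\cdot)$ to get $\|g(t,\cdot)\|_{\infty}=\sup_{x\in D}|g(t,x)|$, and then invoke the fact that a countable supremum of measurable (indeed continuous) functions of $t$ is measurable. The paper instead exhausts $\Rd$ by the closed balls $\overline{B}_n$: it first proves that each truncated norm $z_n(t)=\|g(t,\cdot)\|_{C(\overline{B}_n)}$ is a \emph{continuous} function of $t$ (via the uniform continuity of $g$ on $J_0\times B_n$ for bounded $J_0\Subset J$), and then shows, by a maximizing-sequence argument, that $z_n(t)$ increases to $\|g(t,\cdot)\|_{\infty}$ for every $t$, so the limit is measurable. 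Your proof is shorter and needs less: for the continuity in $t$ of $t\mapsto |g(t,x)|$ at fixed $x$, separate continuity in $t$ would already suffice (so your remark that joint continuity is what is needed there is slightly overstated — joint continuity, or at least continuity in $x$, is instead what justifies the reduction to the dense set $D$). The paper's approach buys a little more structure along the way, namely the continuity of the truncated sup-norms and the representation of $\|g(t,\cdot)\|_{\infty}$ as an increasing limit of continuous functions; both arguments in fact show that the function is lower semicontinuous, hence Borel measurable, and either one serves the purpose for which the lemma is used.
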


\begin{proof}
To begin with, we observe that for any $n\in\N$ the function $t\mapsto \|g(t,\cdot)\|_{C(\overline B_n)}$ is continuous in $J$.
This is a straightforward consequence of the uniform continuity of $g$ in $J_0\times B_n$ for any bounded interval $J_0$
compactly embedded into $J$.
To complete the proof, it suffices to show that $z_n(t):=\|g(t,\cdot)\|_{C(\overline B_n)}$ converges to $\|g(t,\cdot)\|_{\infty}$
for any $t\in J$. Clearly, for any fixed $t\in J$, the sequence $(z_n(t))$ is increasing and is bounded from
above by $\|g(t,\cdot)\|_{\infty}$.
To prove that, $(z_n(t))$ converges to $\|g(t,\cdot)\|_{\infty}$,
we fix a sequence $(x_n)\subset\Rd$ such that
$|g(t,x_n)|$ tends to $\|g(t,\cdot)\|_{\infty}$ as $n\to +\infty$.
For any $n\in\N$, let $k_n\in\N$ be such that $x_n\in B_{k_n}$. Without loss of generality, we can assume that the
sequence $(k_n)$ is increasing. Then,
$z_{k_n}(t)=\|g(t,\cdot)\|_{C(\overline B_{k_n})}\ge |g(t,x_n)|$ for any $n\in\N$.
Hence, the sequence $(z_{k_n}(t))$ converges to $\|g(t,\cdot)\|_{\infty}$ and this is enough to conclude that
the whole sequence $(z_n(t))$ converges to $\|g(t,\cdot)\|_{\infty}$ as $n\to +\infty$.
\end{proof}

Finally, we prove some interior $L^p$-estimates.

\begin{prop}
\label{prop-A3}
Let $\Omega\subset\Rd$ be a bounded open set and let
$u\in C^{1,2}((s,T)\times\Omega)$ solve the equation $D_tu=\A u$ in $(s,T)\times \Omega$.
Then, for any $x_0\in\Omega$ and $R_1>0$, such that $B_{R_1}(x_0)\Subset \Omega$, there
exists a positive constant $c=c(R_1,x_0,s,T)$ such that
\begin{align*}
(t-s)\|u(t,\cdot)\|_{W^{2,p}(B_{R_1}(x_0))}+\sqrt{t-s}\|u(t,\cdot)\|_{W^{1,p}(B_{R_1}(x_0))}
\leq c\sup_{r\in(s,T)}\|u(r,\cdot)\|_{L^p(\Omega)}.
\end{align*}
\end{prop}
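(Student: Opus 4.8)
\emph{Set-up.} The plan is to reduce, by localization and a parabolic rescaling, to the classical interior $L^p$-estimates for non-degenerate parabolic equations, and to read the powers of $t-s$ off the homogeneity of the Sobolev norms under parabolic dilations. First I would fix $R_2\in(R_1,+\infty)$ with $\overline{B_{R_2}(x_0)}\subset\Omega$ (possible since $\overline{B_{R_1}(x_0)}$ is a compact subset of the open set $\Omega$) and a small $\rho_0\in(0,R_2-R_1]$. On the compact set $[s,T]\times\overline{B_{R_2}(x_0)}\subset I\times\Rd$ the coefficients $q_{ij},b_i$ are bounded, uniformly continuous and have bounded first-order spatial derivatives (Hypothesis \ref{base}(i)), and $Q$ is uniformly elliptic with constant $\kappa_0$ (Hypothesis \ref{base}(ii)); hence, after parabolic rescaling, the operators $\mathcal A(t)$ restricted to this set fall within the scope of the interior $L^p$-theory for parabolic equations, with a constant uniform in $t\in[s,T]$ and in the scale $\rho\in(0,\rho_0]$, once $\rho_0$ is chosen small enough that the leading coefficients are, at that scale, as close to constant as the theory requires. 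Since $u\in C^{1,2}((s,T)\times\Omega)$ it belongs to the anisotropic Sobolev space $W^{1,2}_p$ on every cylinder compactly contained in $(s,T)\times\Omega$, so all the inequalities below are genuine a priori bounds; moreover I may assume $M:=\sup_{r\in(s,T)}\|u(r,\cdot)\|_{L^p(\Omega)}<+\infty$, otherwise there is nothing to prove.

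\emph{Rescaled estimate and covering.} The classical interior $L^p$-estimate for $u$, in time-slice form and rescaled from a parabolically matched cylinder of size $\rho$ to a reference cylinder of unit size, reads: there is $C>0$, independent of $t_1,\rho,x^\ast$, such that for all $\rho\in(0,\rho_0]$, $x^\ast\in\overline{B_{R_1}(x_0)}$ and $t_1\in(s,T)$ with $t_1-4\rho^2\ge s$,
\begin{equation*}
\|u(t_1,\cdot)\|_{L^p(B_\rho(x^\ast))}+\rho\,\|\nabla_xu(t_1,\cdot)\|_{L^p(B_\rho(x^\ast))}+\rho^2\|D^2_xu(t_1,\cdot)\|_{L^p(B_\rho(x^\ast))}\le C\rho^{-2/p}\|u\|_{L^p((t_1-4\rho^2,t_1)\times B_{2\rho}(x^\ast))}.
\end{equation*}
Then, for $t\in(s,T)$, I would take $\rho:=\tfrac12\min\{\sqrt{t-s},\rho_0\}$, so that $t-4\rho^2\ge s$ and $2\rho\le\rho_0\le R_2-R_1$, and cover $B_{R_1}(x_0)$ by balls $B_\rho(x_j)$ with centres $x_j\in B_{R_1}(x_0)$ whose doubled balls $B_{2\rho}(x_j)\subset B_{R_2}(x_0)$ have overlap $\le\kappa_d$ (depending only on $d$). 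Applying the displayed estimate at each $x_j$, raising to the power $p$ and summing, the bounded overlap and $(t-4\rho^2,t)\subset(s,T)$ give
\begin{equation*}
\sum_j\|u\|_{L^p((t-4\rho^2,t)\times B_{2\rho}(x_j))}^p\le\kappa_d\int_{t-4\rho^2}^{t}\|u(\sigma,\cdot)\|_{L^p(\Omega)}^p\,d\sigma\le 4\kappa_d\,\rho^2M^p,
\end{equation*}
hence, since $\rho^{-2/p}(4\rho^2)^{1/p}$ is bounded,
\begin{equation*}
\|u(t,\cdot)\|_{L^p(B_{R_1}(x_0))}+\rho\,\|\nabla_xu(t,\cdot)\|_{L^p(B_{R_1}(x_0))}+\rho^2\|D^2_xu(t,\cdot)\|_{L^p(B_{R_1}(x_0))}\le C'M,
\end{equation*}
with $C'$ depending only on $C$, $\kappa_d$ and $p$.

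\emph{Conclusion.} Since $\rho^2=\tfrac14\min\{t-s,\rho_0^2\}\ge c_1(t-s)$ with $c_1:=\tfrac14\min\{1,\rho_0^2/(T-s)\}>0$, one has $(t-s)\rho^{-2}\le c_1^{-1}$ and $\sqrt{t-s}\,\rho^{-1}\le c_1^{-1/2}$; multiplying the last display by $(t-s)$ and by $\sqrt{t-s}$ respectively and using $t-s\le T-s$, it follows that both $(t-s)\|u(t,\cdot)\|_{W^{2,p}(B_{R_1}(x_0))}$ and $\sqrt{t-s}\,\|u(t,\cdot)\|_{W^{1,p}(B_{R_1}(x_0))}$ are bounded by a constant times $M$, which is the assertion with $c=c(R_1,x_0,s,T)$ (through $R_2,\rho_0,\kappa_d,c_1$ and the uniform interior constant $C$).

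\emph{The main obstacle.} The hard part will be the \emph{time-slice} form of the interior $L^p$-estimate used above: the parabolic Calder\'on--Zygmund estimate controls only the full anisotropic norm $\|u\|_{W^{1,2}_p}$ on an interior subcylinder, while here one needs a bound for $\|u(t_1,\cdot)\|_{W^{2,p}}$ at the final time. This can be obtained by a short bootstrap: differentiating the equation once in the space variables---admissible since $q_{ij},b_i\in C^{1+\alpha}$ in $x$---shows that each $D_ku$ solves a parabolic equation with right-hand side in $L^p_{\rm loc}$, so $\nabla_xu\in W^{1,2}_p$ on a slightly smaller subcylinder; then the parabolic trace embedding $W^{1,2}_p\hookrightarrow C([\,\cdot\,];B^{2-2/p}_{p,p})$ together with $B^{2-2/p}_{p,p}\hookrightarrow W^{1,p}$ yields $u(t_1,\cdot)\in W^{2,p}$ with the required bound when $p\ge2$, and for $1<p<2$ the same conclusion follows by one further, fractional, step of the same kind. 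Alternatively, an estimate of exactly this form is already available in the literature on Kolmogorov-type parabolic problems; see, e.g., \cite{KunLorLun09Non,LorLibro-2}.
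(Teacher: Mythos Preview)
Your strategy---parabolic rescaling plus a covering argument---is conceptually clean and would work \emph{if} the time-slice estimate you display were available, but that estimate is precisely the content of the proposition, and your proposed routes to it both have gaps. The bootstrap you sketch (differentiate in space, apply interior $W^{1,2}_p$ estimates to $D_ku$, then use the trace embedding $W^{1,2}_p\hookrightarrow C([\,\cdot\,];B^{2-2/p}_{p,p})$) gives $v(0,\cdot)\in B^{3-2/p}_{p,p}$ after one differentiation; for $1<p<2$ this is strictly weaker than $W^{2,p}$, and a second differentiation would require $q_{ij}\in C^2_x$, which Hypothesis~\ref{base}(i) does not provide (the coefficients are only $C^{1+\alpha}$ in $x$). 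Since $p_0$ may be taken arbitrarily close to $1$, this range matters. Your ``fractional step'' is not spelled out, and the references you cite do not contain the pointwise-in-time $W^{2,p}$ bound in this form---indeed the paper proves it here precisely because it is not readily available.

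The paper's argument is quite different and sidesteps the time-slice issue altogether. It fixes a nested sequence of balls $B_{r_n}(x_0)$ between $B_{R_1}(x_0)$ and $B_{R_2}(x_0)$ with cut-offs $\vartheta_n$ whose $C^k$-norms grow like $2^{kn}$, writes $u_n=\vartheta_n u$ via the Dirichlet evolution operator $G^{\mathcal D}_{n+1}$ on $B_{r_{n+1}}(x_0)$, and uses the analytic-semigroup smoothing $\|G^{\mathcal D}(t,r)\psi\|_{W^{2,p}}\le c(t-r)^{-1+\alpha/2}\|\psi\|_{W^{\alpha,p}}$ to bound $\zeta_n:=\sup_t(t-s)\|u(t,\cdot)\|_{W^{2,p}(B_{r_n})}$ recursively by $8^nc\varepsilon\,\zeta_{n+1}$ plus a good term (here $\|g_n\|_{W^{\alpha,p}}$ is controlled via interpolation between $\|u\|_{L^p}$ and $\zeta_{n+1}$, with Young's inequality introducing the free parameter $\varepsilon$). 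Choosing $\varepsilon=8^{-n}c^{-1}\eta$ with $\eta$ small, multiplying by $\eta^n$, and summing the resulting telescoping series over $n$ closes the estimate because $(\zeta_n)$ is a priori bounded. The $W^{1,p}$ bound then follows by interpolation. This iteration uses only one spatial derivative of the coefficients and works uniformly for all $p>1$, which is exactly where your bootstrap runs into trouble.
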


\begin{proof}
Throughout the proof, we denote by $c$ a positive constant, independent of $n$ and $u$, which may vary from line to line.

Let us fix $0<R_1<R_2$, such that $\overline{B_{R_2}(x_0)}\subset\Omega$, and
a sequence of cut-off functions $(\vartheta_n)\subset C_c^\infty(\Omega)$
such that $\one_{B_{r_n}(x_0)}\leq\vartheta_n\leq \one_{B_{r_{n+1}}(x_0)}$ and
$\|\vartheta_n\|_{C_b^k(\Omega)}\leq 2^{kn}c$, for any $n\in\N\cup\{0\}$ and $k=0,1,2,3$, where
$r_n:=2R_1-R_2+(2-2^{-n})(R_2-R_1)$. Since the function $u_n:=\vartheta_nu$ solves the equation $D_tu_n=\A u_n+g_n$ in $(s,T)\times B_{r_{n+1}}(x_0)$, where
$g_n=-u\A\vartheta_n-\langle Q\nabla_xu,\nabla\vartheta_n\rangle$, we can write
\begin{align}
u_n(t,x)=(G_{n+1}^{{\mathcal D}}(t,s)\vartheta_nu(s,\cdot))(x)+\int_s^t(G_{n+1}^{{\mathcal D}}(t,\sigma)g_n(\sigma,\cdot))(x)d\sigma,
\label{acqua-5}
\end{align}
where $G_{n+1}^{{\mathcal D}}(t,s)$ is the evolution operator associated to the realization of the operator $\A$ in
$L^p(B_{r_{n+1}}(x_0))$ with homogeneous Dirichlet boundary conditions. It is well known that
$\|G^{{\mathcal D}}(t,r)\psi\|_{W^{2,p}(B_{r_{n+1}}(x_0))}\leq c(t-r)^{-1+\frac{\alpha}{2}}\|\psi\|_{W^{\alpha,p}(B_{r_{n+1}}(x_0))}$
for any $\alpha\in (0,1)$, $\psi\in W^{\alpha,p}(B_{r_{n+1}}(x_0))$ and $s\leq r<t\leq T$. Since $g_n(\sigma,\cdot)\in W^{\alpha,p}(B_{r_{n+1}}(x_0))$ for any $\sigma\in(s,t)$, from \eqref{acqua-5} we obtain
\begin{align*}
(t-s)\|u(t,\cdot)\|_{W^{2,p}(B_{r_n}(x_0))}\le &c\|u(s,\cdot)\|_{L^p(B_{r_{n+1}}(x_0))}\\
&+c\int_s^t (t-\sigma)^{-1+\frac{\alpha}{2}}\|g_n(\sigma,\cdot)\|_{W^{\alpha,p}(B_{r_{n+1}}(x_0))}d\sigma.
\end{align*}
Now, for any $n\in\N$ we set $\zeta_n:=\sup_{t\in(s,T)}(t-s)\|u(t,\cdot)\|_{W^{2,p}(B_{r_n}(x_0))}$ and estimate the function under the
integral sign. At first, we note that
\begin{align*}
\|g_n(\sigma,\cdot)\|_{W^{\alpha,p}(B_{r_{n+1}}(x_0))}
\leq c\|\vartheta_n\|_{C_b^{2+\alpha}(B_{r_{n+1}}(x_0))}\|u(\sigma,\cdot)\|_{W^{1+\alpha,p}(B_{r_{n+1}}(x_0))}.
\end{align*}
By interpolation and using Young's inequalities we obtain, for any $\sigma\in (s,t)$,
\begin{align*}
\|u(\sigma,\cdot)\|_{W^{1,p}(B_{r_{n+1}}(x_0))}
\leq & c(\sigma-s)^{-\frac{1}{2}}\|u(\sigma,\cdot)\|^{\frac{1}{2}}_{L^p(B_{r_{n+1}}(x_0))}\sqrt{\zeta_{n+1}} \\
\leq & (\sigma-s)^{-\frac{1}{2}}\left(c\varepsilon^{-1}\|u(\sigma,\cdot)\|_{L^p(\Omega)}+\varepsilon\zeta_{n+1}\right),
\end{align*}
and
\begin{align*}
\|\nabla_xu(\sigma,\cdot)\|_{W^{\alpha,p}(B_{r_{n+1}}(x_0))}
\leq & (\sigma-s)^{-\frac{1+\alpha}{2}}\!\Big (c\varepsilon^{-\frac{1+\alpha}{1-\alpha}}\|u(\sigma,\cdot)\|_{L^p(\Omega)}
+\varepsilon\zeta_{n+1}\Big ).
\end{align*}
Collecting the above estimates together we get
\begin{eqnarray*}
\zeta_n\leq 8^nc\varepsilon\zeta_{n+1}+c\sup_{r\in(s,T)}\|u(r,\cdot)\|_{L^p(\Omega)}(1+8^n\varepsilon^{-(1+\alpha)/(1-\alpha)}).
\end{eqnarray*}
Now we fix $0<\eta<64^{-1/(1+\alpha)}$ and $\varepsilon=8^{-n}c^{-1}\eta$. Multiplying both
the sides of the previous inequality by $\eta^n$ and summing up from $0$ to $N$ yields
\begin{align}
\zeta_0-\eta^{N+1}\zeta_{N+1}
\leq c\sup_{r\in(s,T)}\|u(r,\cdot)\|_{L^p(\Omega)}.
\label{reti}
\end{align}
Since
$\{\zeta_n\}_{n\in\N}$ is bounded, taking the limit as $N\rightarrow+\infty$ in the
left-hand side of \eqref{reti} we conclude that
$(t-s)\|u(t,\cdot)\|_{W^{2,p}(B_{R_1}(x_0))}\leq c\sup_{r\in(s,T)}\|u(r,\cdot)\|_{L^p(\Omega)}$
for any $t\in(s,T)$. An interpolation argument gives
$\|u(t,\cdot)\|_{W^{1,p}(B_{R_1}(x_0))}\leq c(t-s)^{-1/2}\sup_{r\in(s,T)}\|u(r,\cdot)\|_{L^p(\Omega)}$ for any
$t\in(s,T)$, and this completes the proof.
\end{proof}

\end{document}